\newcommand{\QATOP}{}
\newtheorem{prop}{Proposition}
\newtheorem{defi}{Definition}
\newtheorem{lemm}[prop]{Lemma}
\newtheorem{theo}[prop]{Theorem}
\newtheorem{coro}[prop]{Corollary}
\newcommand{\alth}{{\alpha,\theta}}
\newcommand{\eq}{\begin{equation}}
\newcommand{\en}{\end{equation}}
\newcommand{\beq}{\begin{eqnarray*}}
\newcommand{\eeq}{\end{eqnarray*}}
\newcommand{\ds}{\displaystyle}
\def\build#1_#2^#3{\mathrel{\mathop{\kern 0pt#1}\limits_{#2}^{#3}}}
\newcommand{\beqs}{\begin{eqnarray*}&\ds}
\newcommand{\eeqs}{&\end{eqnarray*}}
\newcommand{\bA}{\mathbb{A}}
\newcommand{\bP}{\mathbb{P}}
\newcommand{\bN}{\mathbb{N}}
\newcommand{\bR}{\mathbb{R}}
\newcommand{\bD}{\mathbb{D}}
\newcommand{\bE}{\mathbb{E}}
\newcommand{\bT}{\mathbb{T}}
\newcommand{\cC}{\mathcal{C}}
\newcommand{\cL}{\mathcal{L}}
\newcommand{\cP}{\mathcal{P}}
\newcommand{\cR}{\mathcal{R}}
\newcommand{\cS}{\mathcal{S}}
\newcommand{\cT}{\mathcal{T}}
\newcommand{\ft}{\mathbf{t}}
\newcommand{\fs}{\mathbf{s}}
\begin{document}

\title{A new family of Markov branching trees: the alpha-gamma model}
\author{
Bo Chen\thanks{University of Oxford; email chen@stats.ox.ac.uk}
\and %
Daniel Ford\thanks{Google Inc.;
%, 1600 Amphitheatre Parkway, Mountain View, CA 94043, USA;
email dford@math.stanford.edu}
\and %
Matthias Winkel\thanks{%
University of Oxford; email winkel@stats.ox.ac.uk}}
\maketitle
\begin{abstract}
We introduce a simple tree growth process that gives
rise to a new two-parameter family of discrete fragmentation trees that extends Ford's
alpha model to multifurcating trees and includes the trees obtained by uniform
sampling from Duquesne and Le Gall's stable continuum random tree. We call
these new trees the alpha-gamma trees. In this paper, we obtain their
splitting rules, dislocation measures both in ranked order and in sized-biased
order, and we study their limiting behaviour.

\emph{AMS 2000 subject classifications: 60J80.\newline
Keywords: Alpha-gamma tree, splitting rule, sampling
consistency, self-similar fragmentation, dislocation measure,
continuum random tree, $\mathbb{R}$-tree, Markov branching model}

%{\tt xxx Draft between authors of work in progress. Not for general distribution. }\vspace{-0.7cm}
\end{abstract}

%\tableofcontents
\section{Introduction}

\em Markov branching trees \em were introduced by Aldous \cite{Ald-93}
as a class of random binary phylogenetic models and extended to the
multifurcating case in \cite{HMPW}. Consider the space $\bT_n$ of
combinatorial trees without degree-2 vertices, one degree-1 vertex
called the {\sc root} and exactly $n$ further degree-1 vertices
labelled by $[n]=\{1,\ldots,n\}$ and called the \em leaves\em; we
call the other vertices
\em branch points\em. Distributions %$P_n^*$
on $\bT_n$ of random trees $T_n^*$ are determined by
distributions %$P_n^\circ$
of the delabelled tree $T_n^\circ$ on the space $\bT_n^\circ$ of \em unlabelled
trees \em and conditional label distributions, e.g. \em exchangeable \em
labels. A sequence $(T_n^\circ,n\ge 1)$ of unlabelled trees has the \em
Markov branching property \em if for all $n\ge 2$ conditionally given that the
branching adjacent to the {\sc root} is into tree components whose numbers of
leaves are $n_1,\ldots,n_k$, these tree components are independent copies of
$T_{n_i}^\circ$, $1\le i\le k$. The distributions of the sizes in the first
branching of $T_n^\circ$, $n\ge 2$, are denoted by
\beqs q(n_1,\ldots,n_k),\qquad n_1\ge\ldots\ge n_k\ge 1,\quad k\ge 2:\quad n_1+\ldots+n_k=n,
\eeqs
and referred to as the \em splitting rule \em of $(T_n^\circ,n\ge 1)$.

Aldous \cite{Ald-93} studied in particular a one-parameter family
($\beta\ge-2$) that interpolates between several models known in
various biology and computer science contexts (e.g. $\beta=-2$ comb,
$\beta=-3/2$ uniform, $\beta=0$ Yule) and that he called the \em
beta-splitting model\em, he sets for $\beta>-2$:
\beq q^{{\rm Aldous}}_\beta(n-m,m)=\frac{1}{Z_n}{n\choose m} B(m+1+\beta,n-m+1+\beta),&&\quad \mbox{for $1\le m<n/2$,}\\
  q^{\rm Aldous}_\beta(n/2,n/2)=\frac{1}{2Z_n}{n\choose n/2}B(n/2+1+\beta,n/2+1+\beta),&&\quad\mbox{if $n$ even,}
\eeq
where $B(a,b)=\Gamma(a)\Gamma(b)/\Gamma(a+b)$ is the Beta function and $Z_n$, $n\ge 2$, are normalisation constants; this extends
to $\beta=-2$ by continuity, i.e. $q^{\rm Aldous}_{-2}(n-1,1)=1$, $n\ge 2$.

For exchangeably labelled Markov branching models $(T_n,n\ge 1)$ it
is convenient to set
\begin{equation}\label{spliteppf} p(n_1,\ldots,n_k):=\frac{m_1!\ldots m_n!}{{n\choose n_1,\ldots,n_k}}q((n_1,\ldots,n_k)^\downarrow),\quad n_j\ge1,j\in[k];k\ge 2:\ n=n_1+\ldots+n_k,
\end{equation}
where $(n_1,\ldots,n_k)^\downarrow$ is the decreasing rearrangement and $m_r$
the number of $r$s of the sequence $(n_1,\ldots,n_k)$.
The function $p$ is called \em exchangeable partition probability function (EPPF) \em and gives the probability that the
branching adjacent to the {\sc root} splits into tree components with label
sets $\{A_1,\ldots,A_k\}$ partitioning $[n]$, with \em block sizes \em $n_j=\#A_j$. Note
that $p$ is invariant under permutations of its arguments. It was shown in
\cite{MPW} that Aldous's beta-splitting models for $\beta>-2$ are the only \em
binary \em Markov branching models for which the EPPF is of Gibbs type
\beqs p^{\rm Aldous}_{-1-\alpha}(n_1,n_2)=\frac{w_{n_1}w_{n_2}}{Z_{n_1+n_2}},\quad n_1\ge 1,n_2\ge 1,\qquad\mbox{in particular }w_n=\frac{\Gamma(n-\alpha)}{\Gamma(1-\alpha)},
\eeqs
and that the \em multifurcating \em Gibbs models are an \em extended \em
Ewens-Pitman two-parameter family of random partitions, $0\le\alpha\le 1$,
$\theta\ge -2\alpha$, or $-\infty\le\alpha<0$, $\theta=-m\alpha$ for some integer $m\ge 2$,
\begin{equation} p_{\alth}^{\rm PD^*}(n_1,\ldots,n_k)=\frac{a_k}{Z_n}\prod_{j=1}^kw_{n_j},\quad
      \mbox{where }w_n=\frac{\Gamma(n-\alpha)}{\Gamma(1-\alpha)}\mbox{ and }a_k=\alpha^{k-2}\frac{\Gamma(k+\theta/\alpha)}{\Gamma(2+\theta/\alpha)},
\label{EPmod}
\end{equation}
boundary cases by continuity. Ford \cite{For-05} introduced a different
\em binary \em model, the \em alpha model\em, using simple
sequential growth rules starting from the unique elements
$T_1\in\bT_1$ and $T_2\in\bT_2$:
\begin{enumerate}\item[(i)$^{\rm F}$] given $T_n$ for $n\ge 2$, assign a weight
  $1-\alpha$ to each of the $n$ edges adjacent to a leaf, and a weight
  $\alpha$ to each of the $n-1$ other edges;
\item[(ii)$^{\rm F}$] select at random with probabilities proportional to the
  weights assigned by step (i)$^{\rm F}$, an edge of $T_n$, say $a_n\rightarrow c_n$
  directed away from the {\sc root};
\item[(iii)$^{\rm F}$] to create $T_{n+1}$ from $T_n$, replace
  $a_n\rightarrow c_n$ by three edges $a_n\rightarrow b_n$,
  $b_n\rightarrow c_n$ and $b_n\rightarrow n+1$ so that two new edges connect
  the two vertices $a_n$ and $c_n$ to a new branch point $b_n$ and a further
  edge connects $b_n$ to a new leaf labelled $n+1$.
\end{enumerate}
It was shown in \cite{For-05} that these trees are Markov branching
trees but that the labelling is not exchangeable. The splitting rule
was calculated and shown to coincide with Aldous's beta-splitting
rules if and only if $\alpha=0$, $\alpha=1/2$ or $\alpha=1$,
interpolating differently between Aldous's corresponding models for
$\beta=0$, $\beta=-3/2$ and $\beta=-2$. This study was taken further
in \cite{HMPW,PW2}.

In this paper, we introduce a new model by extending the simple
sequential growth rules to allow \em multifurcation\em. Specifically, we
also assign weights to \em vertices \em as follows, cf. Figure \ref{fig1}:
\begin{enumerate}\item[(i)] given $T_n$ for $n\ge 2$, assign a weight
  $1-\alpha$ to each of the $n$ edges adjacent to a leaf, a weight
  $\gamma$ to each of the $n-1$ other edges, and a weight $(k-1)\alpha-\gamma$
  to each vertex of degree $k+1\ge 3$;
\item[(ii)] select at random with probabilities proportional to the
  weights assigned by step (i),
  \begin{itemize}\item an edge of $T_n$, say $a_n\rightarrow c_n$
    directed away from the {\sc root},
  \item or, as the case may be, a vertex of $T_n$, say $v_n$;
  \end{itemize}
\item[(iii)] to create $T_{n+1}$ from $T_n$, do the following:
  \begin{itemize}\item if an edge $a_n\rightarrow c_n$ was selected, replace
    it by three edges $a_n\rightarrow b_n$, $b_n\rightarrow c_n$ and
    $b_n\rightarrow n+1$ so that two new edges connect the two vertices $a_n$
    and $c_n$ to a new branch point $b_n$ and a further edge connects $b_n$ to
    a new leaf labelled $n+1$;
  \item if a vertex $v_n$ was selected, add an edge $v_n\rightarrow n+1$ to a
    new leaf labelled $n+1$.\pagebreak[2]
  \end{itemize}
\end{enumerate}
\begin{figure}[t]
\begin{center}
\hspace{-0.0cm}\includegraphics[scale=0.4]{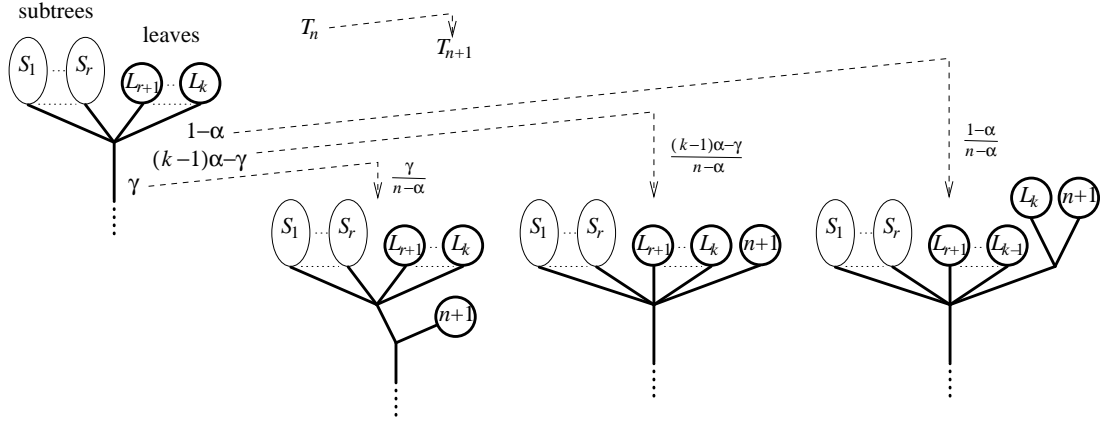}
\end{center}
\vspace{-0.5cm}
\caption{Sequential growth rule: displayed is one branch point of $T_n$ with
degree $k+1$, hence vertex weight $(k-1)\alpha-\gamma$, with $k-r$ leaves
$L_{r+1},\ldots,L_k\in[n]$ and $r$ bigger subtrees $S_1,\ldots,S_r$ attached
to it; all edges also carry weights, weight $1-\alpha$ and $\gamma$ are
displayed here for one leaf edge and one inner edge only; the three associated
possibilities for $T_{n+1}$ are displayed.}
\label{fig1}
\end{figure}
We call the resulting model the \em alpha-gamma model\em. These growth
rules satisfy the rules of probability for all $0\le\alpha\le 1$ and
$0\le\gamma\le\alpha$. They contain the growth rules of the alpha
model for $\gamma=\alpha$. They also contain growth rules for a
model \cite{Mar-08,Mie-03} based on the stable tree of Duquesne and Le
Gall \cite{DuL-02}, for the cases $\gamma=1-\alpha$, $1/2\le\alpha<1$,
where all edges are given the same weight; we show here that these
cases $\gamma=1-\alpha$, $1/2\le\alpha\le 1$, as well as
$\alpha=\gamma=0$ form the intersection with the extended
Ewens-Pitman-type two-parameter family of models (\ref{EPmod}).

\begin{prop}\label{prop1} Let $(T_n,n\ge 1)$ be alpha-gamma trees with
  distributions as implied by the sequential growth rules {\rm (i)-(iii)} for some $0\le\alpha\le 1$ and $0\le\gamma\le\alpha$. Then
  \begin{enumerate}\item[\rm(a)] the delabelled trees $T_n^\circ$, $n\ge 1$, have the Markov branching property. The
    splitting rules are
  \begin{equation}\label{split} q_{\alpha,\gamma}^{\rm seq}(n_1,\ldots,n_k)\quad\propto\quad\left(\gamma+(1-\alpha-\gamma)\frac{1}{n(n-1)}\sum_{i\neq j}n_in_j\right)q_{\alpha,-\alpha-\gamma}^{\rm PD^*}(n_1,\ldots,n_k),
  \end{equation}
  in the case $0\le\alpha<1$, where $q_{\alpha,-\alpha-\gamma}^{\rm PD^*}$ is the splitting rule associated
  via (\ref{spliteppf}) with $p_{\alpha,-\alpha-\gamma}^{\rm PD^*}$, the Ewens-Pitman-type EPPF given in (\ref{EPmod}), and LHS $\propto$ RHS means equality up to a multiplicative constant depending on $n$ and $(\alpha,\gamma)$ that makes the LHS a probability function;
  \item[\rm(b)] the labelling of $T_n$ is exchangeable for all $n\ge 1$ if and only if $\gamma=1-\alpha$, $1/2\le\alpha\le 1$.
  \end{enumerate}
\end{prop}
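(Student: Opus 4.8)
The plan is to read everything off the sequential growth rule, using the bookkeeping fact that step (i) always assigns $T_n$ total weight $n-\alpha$: if $T_n$ has $b$ branch points, of degrees $d_v$, then the branch points contribute $\sum_v((d_v-2)\alpha-\gamma)=\alpha(n-1)-b\gamma$ (using $\sum_v(d_v-2)=n-1$), the $b$ inner edges contribute $b\gamma$, and the $n$ leaf edges contribute $n(1-\alpha)$, the three telescoping to $n-\alpha$ with the $b\gamma$ terms cancelling. For part (a) I would first record the self-similar form of this decomposition. Writing $v_0$ for the branch point adjacent to the root and $S_1,\dots,S_k$ for the subtrees below it with $n_i$ leaves, each $S_i$ together with its edge up to $v_0$ is itself an alpha-gamma tree on $n_i$ leaves of internal weight $n_i-\alpha$, while the root edge and $v_0$ carry the remaining $\gamma+((k-1)\alpha-\gamma)=(k-1)\alpha$. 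Hence one growth step does exactly one of three things, with conditional probabilities $(n_i-\alpha)/(n-\alpha)$ (an independent alpha-gamma step inside $S_i$, leaving $k$ unchanged), $((k-1)\alpha-\gamma)/(n-\alpha)$ (a new size-$1$ block at $v_0$), or $\gamma/(n-\alpha)$ (subdivision of the root edge, which drops all of $T_n$ below a new binary branch point and resets the first split to $(n,1)$). The Markov branching property now follows by induction on $n$: conditionally on the first-split sizes, the subtrees are independent copies of the $T_{n_i}^\circ$ in each of the three cases, the restructuring case invoking the inductive hypothesis on the whole tree $T_n^\circ$, which becomes the new size-$n$ block.

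With this structure the sequence of first-split partitions is itself Markovian with the three transitions above, and $q_{\alpha,\gamma}^{\rm seq}$ is its law at total size $n$. I would establish the displayed formula by checking that it solves the forward recursion
\[
(n-\alpha)\,q^{\rm seq}_{n+1}(\lambda') = \sum_{\lambda\to\lambda'}(\text{move weight})\,q^{\rm seq}_{n}(\lambda),\qquad q^{\rm seq}_{2}(1,1)=1,
\]
plugging in the Ewens--Pitman weights $w_{n_j}=\Gamma(n_j-\alpha)/\Gamma(1-\alpha)$ and $a_k=\alpha^{k-2}\Gamma(k+\theta/\alpha)/\Gamma(2+\theta/\alpha)$ at $\theta=-\alpha-\gamma$ and converting between $q^{\rm PD^*}$ and $p^{\rm PD^*}$ through (\ref{spliteppf}). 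The identity that makes the prefactor natural is $\gamma+(1-\alpha-\gamma)\sum_{i\neq j}n_in_j/(n(n-1))=(1-\alpha)\,\bP(\text{two distinct leaves in different top blocks})+\gamma\,\bP(\text{same block})$, since $\sum_{i\neq j}n_in_j/(n(n-1))$ is exactly the first probability; this is what reconciles the $\gamma$-weighted restructuring move, which feeds every $\lambda$ into the state $(n,1)$, with the grow and add moves. I expect the algebra of this recursion --- in particular matching the coefficient of $q^{\rm PD^*}$ once the root-edge term has been folded in --- to be the main obstacle. The case $n=3$, where the binary split gets weight proportional to $2(1-\alpha)+\gamma$ and the ternary split $\alpha-\gamma$, already displays the mechanism and serves as a check.

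For part (b) I would work with the trajectory probability. Because $T_n$ determines its whole history $T_2,\dots,T_n$ by deleting leaves in reverse order, $\bP(T_n=t)=\prod_{m=2}^{n-1}w_m/(m-\alpha)$, where $w_m$ is the weight of the $m$-th move; the denominators depend only on $n$. When $\gamma=1-\alpha$ every edge carries the common weight $1-\alpha$, so a subdivision costs $1-\alpha$ irrespective of which edge is hit, and the numerator collapses to $(1-\alpha)^{b-1}\prod_v\prod_{j=1}^{k_v-2}(j\alpha-\gamma)$, which depends only on the shape $t^\circ$ (through $b$ and the child-numbers $k_v$ of the branch points). Thus all labellings of a given shape are equiprobable, i.e. the labelling is exchangeable, and $0\le\gamma\le\alpha$ with $\gamma=1-\alpha$ forces $1/2\le\alpha\le1$. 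For the converse I would grow $T_2$ to $T_3$: the three binary trees on $\{1,2,3\}$ appear with probabilities proportional to $1-\alpha$ (cherry $\{1,3\}$), $1-\alpha$ (cherry $\{2,3\}$) and $\gamma$ (cherry $\{1,2\}$, produced by subdividing the root edge), so exchangeability already at $n=3$ forces $\gamma=1-\alpha$, and $0\le\gamma\le\alpha$ again gives $1/2\le\alpha\le1$.
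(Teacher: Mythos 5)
Your route through part (a) is genuinely different from the paper's. The paper does not induct on the root decomposition: it conditions on the location of leaf $1$, decomposes $T_n$ along the spine from the {\sc root} to leaf $1$, and identifies two nested Chinese restaurant processes --- the spinal bushes form an \emph{ordered} CRP with $(\gamma,1-\alpha)$-seating plan (so the first-bush size has law $q^{\rm dec}_{\gamma,1-\alpha}(n-1,\cdot)$ from Lemma \ref{OCRL}), and the subtrees within the first bush form a CRP with $(\alpha,-\gamma)$-seating plan. Multiplying and summing over the part containing leaf $1$ \emph{produces} formula (\ref{split}) constructively, normalising constant included, and the Markov branching property is then read off a spinal decomposition lemma proved by induction. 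Your root-edge/root-vertex/subtree trichotomy with weights $\gamma$, $(k-1)\alpha-\gamma$, $n_i-\alpha$ is correct (and your weight bookkeeping checks out), your induction for the Markov branching property is sound in outline --- indeed it is essentially the recursive argument via the sets $\bA^\circ_{\ft_n^\circ}$ and the weights $W^{\ft_n^\circ}_{\ft_{n-1}^\circ}$ that appears as an alternative proof in the source --- and your part (b) is correct at both ends: the $T_3$ computation ($\gamma/(2-\alpha)$ versus $(1-\alpha)/(2-\alpha)$ for two labellings of the same shape) is exactly the paper's converse, while your trajectory-probability argument for sufficiency is more elementary and self-contained than the paper's, which instead invokes exchangeable sampling from the stable CRT (Section \ref{sectstable}).

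The one genuine gap is that the quantitative heart of part (a), formula (\ref{split}), is never derived: you only name the forward recursion $(n-\alpha)\,q^{\rm seq}_{n+1}(\lambda')=\sum_{\lambda\to\lambda'}(\text{move weight})\,q^{\rm seq}_n(\lambda)$ that it should satisfy and concede that matching the coefficient of $q^{\rm PD^*}_{\alpha,-\alpha-\gamma}$ ``is the main obstacle.'' The strategy is valid --- the first-split chain is Markov with your three transitions and initial condition $q_2(1,1)=1$, so the recursion determines $q^{\rm seq}_n$ uniquely and a verification would suffice --- and the required algebra is of the same flavour as the sampling-consistency check (\ref{del3})--(\ref{del4}) carried out in Proposition \ref{samplecons}, using $\Gamma_\alpha(n_i+1)=(n_i-\alpha)\Gamma_\alpha(n_i)$ and the identity $\sum_{u\neq v}(n_u+\delta_{ui})(n_v+\delta_{vi})=\sum_{u\neq v}n_un_v+2(n-1-n_i)$ to track how the prefactor transforms under each move. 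But as written the formula is asserted, not proved; you should either push the recursion through or switch to the paper's spinal CRP computation, which obtains (\ref{split}) in one pass.
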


For any function $(n_1,\ldots,n_k)\mapsto q(n_1,\ldots,n_k)$ that is
a probability function for all fixed $n=n_1+\ldots+n_k$, $n\ge 2$,
we can construct a Markov branching model $(T_n^\circ,n\ge 1)$. A
condition called \em sampling consistency \em \cite{Ald-93} is to
require that the tree $T_{n,-1}^\circ$ constructed from $T_n^\circ$
by removal of a uniformly chosen leaf (and the adjacent branch point
if its degree is reduced to 2) has the same distribution as
$T_{n-1}^\circ$, for all $n\ge 2$. This is appealing for
applications with incomplete observations. It was shown in
\cite{HMPW} that all sampling consistent splitting rules admit an
integral representation $(c,\nu)$ for an erosion coefficient $c\ge
0$ and a dislocation measure $\nu$ on
$\cS^\downarrow=\{s=(s_i)_{i\ge 1}:s_1\ge s_2\ge\ldots\ge
0,s_1+s_2+\ldots\le 1\}$ with $\nu(\{(1,0,0,\ldots)\})=0$ and
$\int_{\cS^\downarrow}(1-s_1)\nu(ds)<\infty$ as in Bertoin's
continuous-time fragmentation theory \cite{Ber-hom,Ber-ss,Ber-book}. In the most
relevant case when $c=0$ and
$\nu(\{s\in\cS^\downarrow:s_1+s_2+\ldots<1\})=0$, this
representation is
\begin{equation}\label{Kingman}
p(n_1,\ldots,n_k)=\frac{1}{\widetilde{Z}_n}\int_{\cS^\downarrow}\sum_{{\QATOP{i_1,
\ldots,i_{k}\ge 1}\atop{\mbox{\scriptsize
  distinct}}}}\prod_{j=1}^ks_{i_j}^{n_j}\nu(ds),\quad n_j\ge1,j\in[k];k\ge 2:\ n=n_1+\ldots+n_k,
\end{equation}
 where $\widetilde{Z}_n=\int_{\cS^\downarrow}(1-\sum_{i\ge 1}s_i^n)\nu(ds)$,
$n\ge 2$, are the normalization constants. The measure $\nu$ is
unique up to a multiplicative constant. In particular, it can be
shown \cite{Mie-03,HPW} that for the Ewens-Pitman EPPFs $p_\alth^{\rm
PD^*}$ we obtain $\nu={\rm PD}^*_\alth(ds)$ of Poisson-Dirichlet
type (hence our superscript ${\rm PD}^*$ for the Ewens-Pitman type EPPF), where for $0<\alpha<1$
and $\theta>-2\alpha$ we can express \beqs
\int_{\cS^\downarrow}f(s){\rm
PD}^*_\alth(ds)=\bE\left(\sigma_1^{-\theta}f\left(\Delta\sigma_{[0,1]}/\sigma_1\right)\right),
\eeqs for an $\alpha$-stable subordinator $\sigma$ with Laplace
exponent $-\log(\bE(e^{-\lambda\sigma_1}))=\lambda^\alpha$ and with
ranked sequence of jumps
$\Delta\sigma_{[0,1]}=(\Delta\sigma_t,t\in[0,1])^\downarrow$. For
$\alpha<1$ and $\theta=-2\alpha$, we have \beqs
\int_{\cS^\downarrow}f(s){\rm
PD}^*_{\alpha,-2\alpha}(ds)=\int_{1/2}^1
f(x,1-x,0,0,\ldots)x^{-\alpha-1}(1-x)^{-\alpha-1}dx. \eeqs Note that
$\nu={\rm PD}^*_{\alpha,\theta}$ is infinite but $\sigma$-finite
with $\int_{\cS^\downarrow}(1-s_1)\nu(ds)<\infty$ for
$-2\alpha\le\theta\le-\alpha$. This is the relevant range for this
paper. For $\theta>-\alpha$, the measure ${\rm PD}^*_\alth$ just
defined is a multiple of the usual Poisson-Dirichlet probability
measure ${\rm PD}_\alth$ on $\cS^\downarrow$, so for the integral
representation of $p_{\alth}^{\rm PD^*}$ we could also take
$\nu={\rm PD}_\alth$ in this case, and this is also an appropriate
choice for the two cases $\alpha=0$ and $m\ge 3$; the case
$\alpha=1$ is degenerate $q_\alth^{\rm PD^*}(1,1,\ldots,1)=1$ (for
all $\theta$) and can be associated with $\nu={\rm
PD}^*_{1,\theta}=\delta_{(0,0,\ldots)}$, see \cite{MPW}.
\begin{theo}\label{thm2} The alpha-gamma-splitting rules
  $q_{\alpha,\gamma}^{\rm seq}$ are sampling consistent. For $0\le\alpha<1$ and
  $0\le\gamma\le\alpha$ the measure $\nu$ in the integral representation can be
  chosen as
  \begin{equation}\label{thm2nu}
   \nu_{\alpha,\gamma}(ds)=\left(\gamma+(1-\alpha-\gamma)\sum_{i\neq j}s_is_j\right){\rm PD}^*_{\alpha,-\alpha-\gamma}(ds).
  \end{equation}
\end{theo}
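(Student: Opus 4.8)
The plan is to prove both assertions at once by checking that the measure $\nu_{\alpha,\gamma}$ of (\ref{thm2nu}) reproduces, through the integral representation (\ref{Kingman}), the EPPF $p^{\rm seq}_{\alpha,\gamma}$ that (\ref{spliteppf}) attaches to the splitting rule (\ref{split}). Granting the correspondence of \cite{HMPW} in its easy converse direction — any dislocation measure yields, via the paintbox, a family of partitions consistent under removal of a uniform leaf — sampling consistency is automatic once such a representation is produced, so the real content is the identification of $\nu$. Because the prefactor in (\ref{split}) is symmetric in its arguments, (\ref{spliteppf}) merely multiplies $p^{\rm PD^*}_{\alpha,-\alpha-\gamma}$ by that same prefactor; writing $\theta=-\alpha-\gamma$, it thus suffices to show that, up to a factor depending only on $n$,
\begin{equation*}
\int_{\cS^\downarrow}\Bigg(\sum_{\substack{i_1,\ldots,i_k\ge1\\ \text{distinct}}}\prod_{j=1}^k s_{i_j}^{n_j}\Bigg)\Big(\gamma+(1-\alpha-\gamma)\sum_{a\neq b}s_as_b\Big)\,{\rm PD}^*_{\alpha,\theta}(ds)\ \propto\ \Big(\gamma+\tfrac{1-\alpha-\gamma}{n(n-1)}\sum_{i\neq j}n_in_j\Big)\,p^{\rm PD^*}_{\alpha,\theta}(n_1,\ldots,n_k).
\end{equation*}

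The engine of the proof is the explicit moment formula
\begin{equation*}
M_\theta(n_1,\ldots,n_k):=\int_{\cS^\downarrow}\sum_{\substack{i_1,\ldots,i_k\ge1\\ \text{distinct}}}\prod_{j=1}^k s_{i_j}^{n_j}\,{\rm PD}^*_{\alpha,\theta}(ds)=\frac{\alpha^{k-1}\Gamma(k+\theta/\alpha)}{\Gamma(n+\theta)}\prod_{j=1}^k w_{n_j},\qquad n=n_1+\cdots+n_k,
\end{equation*}
which I would obtain from the subordinator description $\int f\,{\rm PD}^*_{\alpha,\theta}=\bE(\sigma_1^{-\theta}f(\Delta\sigma_{[0,1]}/\sigma_1))$ by applying the Mecke--Palm formula to the jumps of $\sigma$ (intensity $\propto x^{-\alpha-1}dx$) and the transform $\bE(e^{-\lambda\sigma_1})=e^{-\lambda^\alpha}$. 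In particular $M_\theta=\kappa_nW$ with $W=a_k\prod_jw_{n_j}$ the unnormalised Ewens--Pitman weight of (\ref{EPmod}) and $\kappa_n\propto1/\Gamma(n+\theta)$, whence $\kappa_n=(n+\theta)(n+1+\theta)\kappa_{n+2}$; moreover $W$ is multiplied by $(n_p-\alpha)$ when block $p$ is enlarged by one and by $(\theta+k\alpha)$ when a singleton is appended to a $k$-block partition.

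Next I would expand $\sum_{a\neq b}s_as_b$ against $\prod_j s_{i_j}^{n_j}$ by splitting the ordered pair $(a,b)$ according to how it meets $\{i_1,\ldots,i_k\}$: both indices present (two blocks enlarged), exactly one present (one block enlarged, one singleton created), or neither present (two singletons created). Each term is then a moment $M_\theta$ of a partition of $n+2$, and using the two increment rules the three families collapse to $\kappa_{n+2}W(n_1,\ldots,n_k)$ times
\begin{equation*}
\sum_{p\neq q}(n_p-\alpha)(n_q-\alpha)+2(\theta+k\alpha)\sum_p(n_p-\alpha)+(\theta+k\alpha)(\theta+(k+1)\alpha)=(n+\theta)^2+\alpha(\theta+k\alpha)-\sum_p(n_p-\alpha)^2.
\end{equation*}
Substituting $\theta=-\alpha-\gamma$ cancels all $k$-dependence and leaves $\sum_{i\neq j}n_in_j+\gamma(\alpha+\gamma-2n)$. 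Adding the $\gamma$-term $\gamma\kappa_nW$, invoking the recursion for $\kappa_n$ together with the elementary identity $n(n-1)-(1-\alpha-\gamma)(\alpha+\gamma-2n)=(n-\alpha-\gamma)(n+1-\alpha-\gamma)$, the left-hand integral becomes an $n$-dependent constant times $(\gamma+\tfrac{1-\alpha-\gamma}{n(n-1)}\sum_{i\neq j}n_in_j)W(n_1,\ldots,n_k)$. Since $W\propto p^{\rm PD^*}_{\alpha,\theta}$, this is exactly the displayed proportionality, proving (\ref{thm2nu}) and hence sampling consistency.

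I expect the main obstacle to be the bookkeeping in the three-case expansion — matching the ordered-index multiplicities of the inner sum in (\ref{Kingman}) with those of $\sum_{a\neq b}$, and verifying that the $\alpha^2k$ and $(\theta+k\alpha)$ terms cancel so that only $\sum_{i\neq j}n_in_j$ and an $n$-dependent constant survive; the explicit form of $M_\theta$ is what makes this transparent. Two side issues remain: confirming that $\nu_{\alpha,\gamma}$ is a genuine dislocation measure, i.e. $\nu_{\alpha,\gamma}(\{(1,0,\ldots)\})=0$ and $\int_{\cS^\downarrow}(1-s_1)\nu_{\alpha,\gamma}(ds)<\infty$ — for $\gamma>0$ this is the stated integrability of ${\rm PD}^*_{\alpha,-\alpha-\gamma}$, while for $\gamma=0$ one leans on the vanishing of $\sum_{a\neq b}s_as_b$ as $s_1\to1$ to absorb the stronger singularity of ${\rm PD}^*_{\alpha,-\alpha}$ — and treating the degenerate boundaries $\gamma=\alpha$ (the two-block measure ${\rm PD}^*_{\alpha,-2\alpha}$, where $M_\theta$ degenerates at $k=2$ and every sum truncates) and $\alpha=\gamma=0$ directly against the closed forms recorded before the theorem.
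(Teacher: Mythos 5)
Your proposal is correct, and your identification of $\nu_{\alpha,\gamma}$ is at heart the same computation as the paper's, read in the opposite direction: the paper decomposes the discrete coefficient $\gamma+(1-\alpha-\gamma)\frac{1}{n(n-1)}\sum_{i\neq j}n_in_j$ into three pieces $A_{ij}$, $B_i$, $C$ and recognises each as the ${\rm PD}^*_{\alpha,-\alpha-\gamma}$-integral of $\prod_j s_{i_j}^{n_j}$ against $\sum_{u,v} s_us_v$ with $u,v$ both inside, straddling, or both outside $\{i_1,\ldots,i_k\}$ --- exactly your three cases --- matching each piece to a shifted EPPF such as $p^{\rm PD^*}_{\alpha,-\alpha-\gamma}(n_1,\ldots,n_i+1,\ldots,n_k,1)$ where you use the increment rules for the weights $w_{n_j}$ and $a_k$. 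Your explicit moment formula $M_\theta$ and the closed-form collapse to $(n+\theta)^2+\alpha(\theta+k\alpha)-\sum_p(n_p-\alpha)^2$ make the cancellation of the $k$-dependence more transparent than the paper's term-by-term matching; the algebra checks out, including the final identity $n(n-1)-(1-\alpha-\gamma)(\alpha+\gamma-2n)=(n-\alpha-\gamma)(n+1-\alpha-\gamma)$. Where you genuinely diverge is sampling consistency: the paper proves it separately and directly (Proposition \ref{samplecons}) by verifying the deletion recursion (\ref{del4}) through explicit manipulation of $p^{\rm seq}_{\alpha,\gamma}$, and only afterwards identifies the measure, whereas you obtain consistency as a corollary of exhibiting the representation via the converse direction of the \cite{HMPW} bijection. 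That is legitimate and more economical, with two provisos. First, it only covers $0\le\alpha<1$: the theorem's first sentence also encompasses $\alpha=1$, where $\nu_{1,\gamma}$ charges $\{s_1+s_2+\cdots<1\}$, so (\ref{Kingman}) does not apply and the extended representation is needed (the paper defers this to Section \ref{sectalpha1}; you should at least note the deferral rather than omit the case). Second, your route makes the verification that $\nu_{\alpha,\gamma}$ is a bona fide dislocation measure load-bearing rather than cosmetic; you flag this, and your observation that for $\gamma=0$ the factor $\sum_{a\neq b}s_as_b\le 2(1-s_1)$ tames ${\rm PD}^*_{\alpha,-\alpha}$ is the right one.
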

The case $\alpha=1$ is discussed in Section \ref{sectalpha1}. We refer to Griffiths
\cite{Gri-83} who used discounting of Poisson-Dirichlet measures by
quantities involving $\sum_{i\neq j}s_is_j$ to model genic
selection.

In \cite{HMPW}, Haas and Miermont's self-similar continuum random trees (CRTs) \cite{HM} are
shown to be scaling limits for a wide class of Markov branching
models. See Sections \ref{seccrts} and \ref{sechmpw} for details. This theory applies here to
yield:

\begin{coro}\label{dconv} Let
  $(T_n^\circ,n\ge 1)$ be delabelled alpha-gamma trees, represented as discrete $\bR$-trees with unit edge lengths, for some 
  $0<\alpha<1$ and $0<\gamma\le\alpha$. Then
  \beqs \frac{T_n^\circ}{n^{\gamma}}\rightarrow\cT^{\alpha,\gamma}\qquad\mbox{in distribution for the Gromov-Hausdorff topology,}
  \eeqs
  where the scaling $n^\gamma$ is applied to all edge lengths, and $\cT^{\alpha,\gamma}$ is a $\gamma$-self-similar CRT whose dislocation measure is a multiple of $\nu_{\alpha,\gamma}$.
\end{coro}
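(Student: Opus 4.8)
The plan is to read the result off the scaling-limit theorem for sampling-consistent Markov branching models in \cite{HMPW}, recalled in Sections~\ref{seccrts} and~\ref{sechmpw}. That theorem requires the splitting rules to be sampling consistent with vanishing erosion coefficient and a conservative dislocation measure $\nu$, and the normalisation constants $\widetilde{Z}_n=\int_{\cS^\downarrow}(1-\sum_{i\ge1}s_i^n)\,\nu(ds)$ to be regularly varying of some index $\gamma\in(0,1)$; its conclusion is Gromov--Hausdorff convergence of $T_n^\circ$, with each unit edge length divided by $\widetilde{Z}_n$ (up to constants), to the $\gamma$-self-similar CRT with dislocation measure $\nu$. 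By Theorem~\ref{thm2} the structural hypotheses hold with $\nu=\nu_{\alpha,\gamma}$: sampling consistency is asserted there, the growth rules create no dust so there is no erosion, and $\nu_{\alpha,\gamma}$ is conservative and integrates $1-s_1$ because ${\rm PD}^*_{\alpha,-\alpha-\gamma}$ does for $-2\alpha\le-\alpha-\gamma\le-\alpha$ while the prefactor $\gamma+(1-\alpha-\gamma)\sum_{i\neq j}s_is_j$ is bounded on $\cS^\downarrow$. Everything thus comes down to the regular variation of $\widetilde{Z}_n$ and the identification of its index as $\gamma$.

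For this I would show $\widetilde{Z}_n\sim c\,n^\gamma$ for a constant $c>0$. Split the integral along the two summands of the prefactor. The mass of $\nu_{\alpha,\gamma}$ is infinite only near $s_1\to1$, and there $\sum_{i\neq j}s_is_j=1-\sum_i s_i^2\to0$, so the prefactor tends to $\gamma$ and the leading contribution is $\gamma$ times that of ${\rm PD}^*_{\alpha,-\alpha-\gamma}$. To analyse $\int(1-\sum_i s_i^n)\,{\rm PD}^*_{\alpha,\theta}(ds)$ I first take $\theta>-\alpha$, where ${\rm PD}^*$ is a multiple of the probability measure ${\rm PD}_{\alpha,\theta}$ and $\bE_{{\rm PD}_{\alpha,\theta}}[\sum_i s_i^n]$ equals the Ewens--Pitman chance that a sample of size $n$ is a single block, namely $\Gamma(1+\theta)\Gamma(n-\alpha)/(\Gamma(1-\alpha)\Gamma(n+\theta))\sim c'\,n^{-\alpha-\theta}$; an Abelian/Tauberian argument then forces the tail ${\rm PD}^*_{\alpha,\theta}(1-s_1\in du)\sim A\,u^{\alpha+\theta-1}\,du$ as $u\to0$, and integrating $1-(1-u)^n$ against it (the main contribution coming from $u\asymp1/n$) gives $\int(1-\sum_i s_i^n)\,{\rm PD}^*_{\alpha,\theta}(ds)\sim c\,n^{-\alpha-\theta}$. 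With $\theta=-\alpha-\gamma$ this is the announced $c\,n^\gamma$. The second summand carries an extra factor $\sum_{i\neq j}s_is_j\approx2(1-s_1)$ near $s_1=1$ and hence contributes only $O(n^{\gamma-1})$, which is negligible.

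Granting $\widetilde{Z}_n\sim c\,n^\gamma$ with $\gamma\in(0,1)$ (guaranteed by $0<\gamma\le\alpha<1$), the theorem of \cite{HMPW} applies verbatim: $T_n^\circ$ with edge lengths divided by $\widetilde{Z}_n$ converges in the Gromov--Hausdorff sense to the $\gamma$-self-similar CRT with dislocation measure $\nu_{\alpha,\gamma}$, and replacing the normalisation $\widetilde{Z}_n$ by $n^\gamma$ only rescales the limit metrically by the constant $c$, so that $T_n^\circ/n^\gamma$ converges to a $\gamma$-self-similar CRT whose dislocation measure is the corresponding multiple of $\nu_{\alpha,\gamma}$. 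The main obstacle is the regular-variation step, and within it the extension of the tail estimate for $1-s_1$ to the range $\theta\in[-2\alpha,-\alpha)$: the subordinator representation and the single-block identity are only available for $\theta>-\alpha$, so I must argue that the exponent of $1-s_1$ near $0$ --- determined by the dominant-jump structure of the $\alpha$-stable subordinator and only reweighted by $\theta$ --- persists where ${\rm PD}^*_{\alpha,\theta}$ is merely $\sigma$-finite, and I must control the error terms (the $\sum_{i\neq j}s_is_j$ contribution and the replacement of $1-\sum_i s_i^n$ by $1-s_1^n$) uniformly enough to turn the heuristic into a genuine Tauberian conclusion.
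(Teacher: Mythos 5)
Your overall strategy is the paper's: reduce Corollary \ref{dconv} to the scaling-limit theorem of \cite{HMPW} and verify its regular-variation hypothesis with index $\gamma$. But the way you verify that hypothesis has a genuine gap, and it sits exactly in the parameter range that matters. You derive the claimed asymptotic $\widetilde{Z}_n\sim c\,n^{-\alpha-\theta}$ from the single-block probability of the Ewens--Pitman partition plus a Tauberian step, both of which are only available for $\theta>-\alpha$, where ${\rm PD}^*_{\alpha,\theta}$ is a finite measure; in that regime the asserted asymptotic is in fact false, since $\widetilde{Z}_n$ then converges to the finite total mass rather than decaying like $n^{-\alpha-\theta}$. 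The case you actually need is $\theta=-\alpha-\gamma\in[-2\alpha,-\alpha)$, where the measure is merely $\sigma$-finite, there is no single-block identity to start from, and the tail behaviour of $1-s_1$ has to be established by some other means. You flag this yourself as the ``main obstacle'', but you do not close it, so the proof is incomplete at its central analytic step.

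The paper closes this step differently and more cheaply, because it has already done the work elsewhere: Lemma \ref{PDSB} and Proposition \ref{prop4} give the density of the first size-biased marginal of $\nu_{\alpha,\gamma}$ in closed form for the whole range $\theta>-2\alpha$, hence the tagged-particle L\'evy measure $\Lambda_{\alpha,\gamma}$ of Corollary \ref{cortps} is explicit, and its tail $\Lambda_{\alpha,\gamma}([x,\infty))\sim c\,x^{-\gamma}$ as $x\downarrow 0$ is read off directly; by formula (19) of \cite{HMPW} this is precisely the regular-variation condition (\ref{v1}). If you want to repair your argument, the cleanest route is to use that explicit marginal density rather than a Tauberian inversion. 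Two further points you should not skip: the theorem as used in the paper also requires the logarithmic moment condition (\ref{v2}), which you never mention (it follows here from the exponential decay of $\Lambda_{\alpha,\gamma}$ at infinity); and it is stated for \emph{strongly} sampling consistent families, which the alpha-gamma trees are not when $\gamma\neq 1-\alpha$ (Proposition \ref{prop13}), so the paper applies it to an auxiliary strongly sampling consistent family $\widetilde{T}_n^\circ$ with the same splitting rules and transfers the conclusion to $T_n^\circ$ by equality in distribution --- a step your direct application glosses over.
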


We observe that every dislocation measure $\nu$ on $\cS^\downarrow$
gives rise to a measure $\nu^{\rm sb}$ on the space of summable
sequences under which fragment sizes are in a size-biased random
order, just as the ${\rm GEM}_\alth$ distribution can be defined as
the distribution of a ${\rm PD}_\alth$ sequence re-arranged in
size-biased random order \cite{csp}. We similarly define ${\rm
GEM}^*_\alth$ from ${\rm PD}^*_\alth$. One of the advantages of
size-biased versions is that, as for ${\rm GEM}_\alth$, we can
calculate marginal distributions explicitly.

\begin{prop}\label{prop4} For $0<\alpha<1$ and $0\le\gamma<\alpha$,
  distributions $\nu_k^{\rm sb}$ of the first $k\ge 1$ marginals
  of the
  size-biased form $\nu_{\alpha,\gamma}^{\rm sb}$ of $\nu_{\alpha,\gamma}$ are
  given, for $x=(x_1,\ldots,x_k)$, by
  \beqs\hspace{-0.3cm} \nu^{\rm sb}_k(dx)=\left(\gamma+(1-\alpha-\gamma)\left(1-\sum_{i=1}^kx_i^2-\frac{1\!-\!\alpha}{1\!+\!(k\!-\!1)\alpha\!-\!\gamma}\left(1-\sum_{i=1}^kx_i\right)^2\right)\right){\rm GEM}^*_{\alpha,-\alpha-\gamma}(dx).
  \eeqs
\end{prop}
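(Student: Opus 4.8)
The plan is to reduce everything to the ordinary two-parameter $\mathrm{GEM}$ family by exploiting the symmetry of the discounting factor, and then to read off the $k$-marginal by integrating out the tail using the stick-breaking description of $\mathrm{GEM}^*_{\alpha,\theta}$ with $\theta=-\alpha-\gamma$. First I would record that the factor $g(s)=\gamma+(1-\alpha-\gamma)\sum_{i\neq j}s_is_j$ appearing in \eqref{thm2nu} is symmetric in its coordinates, hence invariant under reordering. Writing $K(s,dx)$ for the size-biased reordering kernel (so that $\nu^{\rm sb}_{\alpha,\gamma}=\int K(s,\cdot)\,\nu_{\alpha,\gamma}(ds)$) and recalling that $\mathrm{GEM}^*_\alth$ is by definition the size-biased image of $\mathrm{PD}^*_\alth$, the elementary identity $g(x)K(s,dx)=g(s)K(s,dx)$ on the reordering-supported kernel yields at once
\[
\nu^{\rm sb}_{\alpha,\gamma}(dx)=\Big(\gamma+(1-\alpha-\gamma)\sum_{i\neq j}x_ix_j\Big)\mathrm{GEM}^*_{\alpha,-\alpha-\gamma}(dx)
\]
on the space of summable sequences. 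Since $\mathrm{PD}^*$ (and hence $\mathrm{GEM}^*$) is carried by $\{\sum_i x_i=1\}$, I may replace $\sum_{i\neq j}x_ix_j$ by $1-\sum_{i\ge1}x_i^2$.

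Next I would obtain the $k$-marginal by integrating out $x_{k+1},x_{k+2},\ldots$. The key input is the stick-breaking structure of $\mathrm{GEM}^*_{\alpha,\theta}$: the first factor $W_1$ carries the $\sigma$-finite mass, concentrated near $W_1=1$ (exactly where the marginals fail to be finite), while the subsequent factors are genuine $\mathrm{Beta}(1-\alpha,\theta+i\alpha)$ variables for $i\ge2$, all with positive parameters because $\theta+2\alpha=\alpha-\gamma>0$. Consequently, conditionally on $(x_1,\ldots,x_k)$ with residual mass $R_k=1-\sum_{i=1}^kx_i>0$, the renormalised tail $(x_{k+1},x_{k+2},\ldots)/R_k$ is an ordinary (probability) $\mathrm{GEM}_{\alpha,\theta+k\alpha}$ sequence; here $\theta+k\alpha=(k-1)\alpha-\gamma>-\alpha$ throughout $0\le\gamma<\alpha$, so this is a bona fide probability law with finite second-moment sum. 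Expressing the $k$-marginal of $g\cdot\mathrm{GEM}^*$ as its conditional $g$-average times the $k$-dimensional marginal $\mathrm{GEM}^*_{\alpha,-\alpha-\gamma}(dx)$, I then only need
\[
\bE\Big[\sum_{i>k}x_i^2\,\Big|\,x_1,\ldots,x_k\Big]=R_k^2\,\bE_{\mathrm{GEM}_{\alpha,\theta+k\alpha}}\Big[\sum_jy_j^2\Big]=R_k^2\,\frac{1-\alpha}{1+\theta+k\alpha},
\]
where the last equality is the classical identity that two independent size-biased picks from a $\mathrm{PD}_{\alpha,\theta'}$ partition land in the same block with probability $(1-\alpha)/(1+\theta')$. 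Substituting $\theta=-\alpha-\gamma$ turns $1+\theta+k\alpha$ into $1+(k-1)\alpha-\gamma$ and $R_k^2$ into $(1-\sum_{i=1}^kx_i)^2$, and collecting the three resulting terms reproduces precisely the density in the statement.

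The routine steps are the symmetry reduction and the final substitution; the point that needs care is the infinite-measure bookkeeping. Because $\mathrm{GEM}^*_{\alpha,-\alpha-\gamma}$ is only $\sigma$-finite when $\gamma>0$, I must justify the disintegration ``$k$-marginal $=$ conditional $g$-average $\times\ \mathrm{GEM}^*_{k}$'' for a non-finite measure. The saving feature is that all of the infinite mass sits in the first coordinate $W_1$, so that for every fixed $(x_1,\ldots,x_k)$ with $R_k>0$ the conditional law of the tail is a genuine probability measure (an ordinary $\mathrm{GEM}$), against which the symmetric functional $\sum_{i>k}x_i^2\le R_k^2$ integrates without difficulty. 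I expect isolating this finiteness---namely verifying that the $W_1$-singularity is exactly the one already carried by $\mathrm{GEM}^*_{k}$ on the right-hand side, so that the quoted second-moment computation may be performed fibrewise against an honest $\mathrm{GEM}$---to be the only genuine obstacle.
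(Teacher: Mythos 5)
Your proposal is correct and follows essentially the same route as the paper: the same symmetry observation reduces the problem to computing the $k$-marginal of $\bigl(\gamma+(1-\alpha-\gamma)(1-\sum_i x_i^2)\bigr)\,{\rm GEM}^*_{\alpha,-\alpha-\gamma}$, and the same three-term split isolates the tail contribution $\sum_{i>k}x_i^2$. The only (cosmetic) difference is in how that tail term is evaluated -- the paper introduces a $(k+1)$st size-biased pick and integrates it out against the explicit density of Lemma \ref{PDSB}, whereas you invoke the equivalent classical identity $\bE\bigl[\sum_j y_j^2\bigr]=(1-\alpha)/(1+\theta')$ for the conditional tail law ${\rm GEM}_{\alpha,\theta+k\alpha}$; both rest on the same stick-breaking density and give the factor $(1-\alpha)/(1+(k-1)\alpha-\gamma)$.
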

\noindent The other boundary values of parameters are trivial here -- there are at most two non-zero parts.\pagebreak[2]
%In particular, we can deduce the distribution of the tagged fragment
%subordinator from the $k=1$ result. DIFFICULT TO FORMULATE HERE

%\begin{cor} Tagged fragment subordinator for exchangeable leaf, also for leaf 1!
%\end{cor}

%\begin{prop} $T_n$ is not strongly sampling consistent unless $\gamma=1-\alpha$, $1/2\le \alpha\le 1$.
%\end{prop}
%NOT NEEDED, WE HAVE NON-EXCHANGEABILITY

We can investigate the convergence of Corollary \ref{dconv} when
labels are retained. Since labels are non-exchangeable, in general,
it is not clear how to nicely represent a continuum tree with
infinitely many labels other than by a consistent sequence $\cR_k$
of trees with $k$ leaves labelled $[k]$, $k\ge 1$. See however
\cite{PW2} for developments in the binary case $\gamma=\alpha$ on how
to embed $\cR_k$, $k\ge 1$, in a CRT $\cT^{\alpha,\alpha}$. The
following theorem extends Proposition 18 of \cite{HMPW} to the
multifurcating case.

\begin{theo}\label{LE} Let $(T_n,n\ge 1)$
  be a sequence of trees resulting from the alpha-gamma-tree growth rules for 
  some $0<\alpha<1$ and $0<\gamma\le\alpha$.
  Denote by $R(T_n,[k])$ the subtree of $T_n$ spanned by the {\sc root} and leaves
  $[k]$, reduced by removing degree-2 vertices, represented as discrete
  $\bR$-tree with graph distances in $T_n$ as edge lengths. Then
  \beqs \frac{R(T_n,[k])}{n^{\gamma}}\rightarrow\cR_k\qquad\mbox{a.s. in the
    sense that all edge lengths converge,}
  \eeqs
  for some discrete tree $\cR_k$ with shape $T_k$ and edge lengths specified
  in terms of three random variables, conditionally independent given that $T_k$ has
  $k+\ell$ edges, as $L_kW_k^\gamma D_k$ with
  \begin{itemize}\item $W_k\sim{\rm beta}(k(1-\alpha)+\ell\gamma,(k-1)\alpha-\ell\gamma)$, where ${\rm beta}(a,b)$ is the beta distribution with density $B(a,b)^{-1}x^{a-1}(1-x)^{b-1}1_{(0,1)}(x)$;
    \item $L_k$ with density $\displaystyle\frac{\Gamma(1+k(1-\alpha)+\ell\gamma)}{\Gamma(1+\ell+k(1-\alpha)/\gamma)}s^{\ell+k(1-\alpha)/\gamma}g_\gamma(s)$, where $g_\gamma$ is the Mittag-Leffler density, the density of $\sigma_1^{-\gamma}$ for a subordinator $\sigma$ with Laplace exponent $\lambda^\gamma$;
    \item $D_k\sim{\rm Dirichlet}((1-\alpha)/\gamma,\ldots,(1-\alpha)/\gamma,1,\ldots,1)$, where ${\rm Dirichlet}(a_1,\ldots,a_m)$ is the Dirichlet distribution on $\Delta_m=\{(x_1,\ldots,x_m)\in[0,1]^m:x_1+\ldots+x_m=1\}$ with density of the first $m-1$ marginals proportional to $x_1^{a_1-1}\ldots x_{m-1}^{a_{m-1}-1}(1-x_1-\ldots-x_{m-1})^{a_m-1}$; here $D_k$ contains edge length proportions, first with parameter $(1-\alpha)/\gamma$ for edges adjacent to leaves and then with parameter $1$ for the other edges, each enumerated
      e.g. by depth first search.
  \end{itemize}
\end{theo}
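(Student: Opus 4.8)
The plan is to fix $k$, freeze the shape of the reduced tree, recognise the joint evolution of its edge lengths as a generalised P\'olya urn, read off almost sure convergence from a martingale, and finally identify the limit law by a moment computation matched against the stable-subordinator description of ${\rm PD}^*_{\alpha,\theta}$. First I would note that for $n\ge k$ all leaves $1,\ldots,k$ are present and that adding any further leaf -- whether by subdividing an edge or by attaching at a vertex -- creates at most a degree-$2$ vertex on the paths between the {\sc root} and $[k]$, which is suppressed in the reduction; hence $R(T_n,[k])$ has the fixed shape $T_k$ for all $n\ge k$, and I condition on $T_k$ and on its number $k+\ell$ of edges, equivalently on the number $\ell$ of internal edges, which by a short count equals the number of branch points. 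For a reduced edge $e$ let $\Lambda_n(e)$ be the number of $T_n$-edges on the corresponding path and let $w_n(e)$ be the total growth-weight those unit edges carry. A one-line check of the growth rule shows that a new branch point on the path raises $w_n(e)$ by exactly $\gamma$ in all cases (two inner edges of weight $\gamma$ replace one; or an inner edge of weight $\gamma$ is adjoined to the unchanged leaf edge of weight $1-\alpha$), that $\Lambda_n(e)$ then increases by $1$, and that this occurs with probability $w_n(e)/(n-\alpha)$, since the total weight of $T_n$ is identically $n-\alpha$. Attaching a leaf at an interior vertex of the path changes neither $w_n(e)$ nor $\Lambda_n(e)$.

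Thus $(w_n(e))_{e}$ evolves as a generalised P\'olya urn with $k+\ell$ tracked colours of fixed increment $\gamma$ against a total growing by $1$ per step, the remaining weight forming an untracked colour. From $\bE[w_{n+1}(e)\mid\cF_n]=w_n(e)(1+\gamma/(n-\alpha))$ the process $w_n(e)\big/\prod_{m=k}^{n-1}(1+\gamma/(m-\alpha))$ is a nonnegative martingale, and $\prod_{m=k}^{n-1}(1+\gamma/(m-\alpha))\asymp n^{\gamma}$ by Gamma-function asymptotics; hence $\Lambda_n(e)/n^{\gamma}$ converges almost surely, jointly in $e$, to a limit $\lambda_e$. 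This already delivers the stated convergence and the scaling $n^{\gamma}$.

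It remains to identify the joint law of $(\lambda_e)_e$, which is the crux. Because each draw shifts one colour by the deterministic amount $\gamma$, the joint rising-factorial moments of $(w_n(e))$ obey a closed recursion whose solution is a product of ratios of Gamma functions; letting $n\to\infty$ while tracking the colours from their creation during the first $k$ steps, these moments factor into those of three independent objects. The normalised vector $(\lambda_e/\sum_{e'}\lambda_{e'})_e$ converges to the Dirichlet law $D_k$ with parameter $(1-\alpha)/\gamma$ on each of the $k$ leaf edges and $1$ on each of the $\ell$ internal edges; by the beta--Gamma/Poisson--Dirichlet algebra this composition is asymptotically independent of the total $\sum_e\lambda_e$, whose tilt exponent $\ell+k(1-\alpha)/\gamma$ is exactly the sum of the Dirichlet parameters. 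The total must then be matched to $L_kW_k^{\gamma}$: the factor $W_k\sim{\rm beta}(k(1-\alpha)+\ell\gamma,(k-1)\alpha-\ell\gamma)$ records the limiting split of mass, its two parameters being precisely the total edge-weight and vertex-weight of $T_k$, which sum to $k-\alpha$, while $\gamma$-self-similarity turns this mass fraction into the length factor $W_k^{\gamma}$; the residual Mittag-Leffler factor $L_k$ is the intrinsic scaling already seen in the base case $k=1$, where $W_1=D_1=1$ and $\Lambda_n$ is just the {\sc root}-to-leaf distance. To pin the law of $L_k$ and the independence down cleanly I would use the representation of ${\rm PD}^*_{\alpha,\theta}$ through the $\alpha$-stable subordinator recalled before Theorem~\ref{thm2}, under which the Mittag-Leffler variable $\sigma_1^{-\gamma}$ is independent of the normalised jump sequence.

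The main obstacle is exactly this last identification: establishing the conditional independence of $L_k$, $W_k$ and $D_k$ given $\ell$ and extracting the precise Mittag-Leffler tilt, despite the random build-up of the reduced edge lengths during the first $k$ steps -- the almost sure convergence and the Dirichlet split being comparatively routine. I expect the cleanest route is the full-history moment computation above, matched term by term against the explicit product density, with the stable-subordinator picture supplying both the Mittag-Leffler law of $L_k$ and the independence of total from composition.
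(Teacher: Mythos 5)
Your proposal is correct in outline but follows a genuinely different route from the paper. You treat the reduced edge weights $(w_n(e))_e$ directly as a generalised P\'olya urn with increment $\gamma$ against the total weight $n-\alpha$, get almost sure convergence of $\Lambda_n(e)/n^\gamma$ from the martingale $w_n(e)/\prod_{m}(1+\gamma/(m-\alpha))$, and plan to identify the limit law by moment matching. The paper instead runs a \emph{nested} Chinese-restaurant decomposition: a first urn separating leaves in subtrees attached to branch points of $T_k$ (giving $W_k$, whose beta parameters are the total edge weight $k(1-\alpha)+\ell\gamma$ and total vertex weight $(k-1)\alpha-\ell\gamma$ of $T_k$, exactly as you observe) from leaves in subtrees attached to edges; then a CRP with $(\gamma,k(1-\alpha)+\ell\gamma)$-seating plan on the latter whose number of tables (skeletal bushes) is the total skeleton length, so that Lemma \ref{crp2} delivers the tilted Mittag-Leffler law of $L_k$ directly; and finally a further urn distributing bushes over the $k+\ell$ edges, giving $D_k$ and, by construction, the conditional independence of the three factors. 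What the paper's route buys is that the crux you flag --- the law of the total and the factorisation $L_kW_k^\gamma$ with $L_k$ independent of $W_k$ --- comes for free from the hierarchy and from the already-proved CRP limit theorem, and moreover $W_k$ is identified pathwise as the limit of $W_{n,k}/n$, which is needed for Proposition \ref{slagt} and Section \ref{secbw}. What your route buys is a shorter proof of the a.s.\ convergence itself and of the Dirichlet composition (conditionally on the times at which skeleton edges are hit, the allocation among them is a plain P\'olya urn with weights $(1-\alpha)/\gamma$ and $1$ in units of $\gamma$, independent of those times). The one step you must actually execute is the moment identification of the total: since $\sum_ew_{n+1}(e)=\sum_ew_n(e)+\gamma$ with probability $\sum_ew_n(e)/(n-\alpha)$, the rising factorials with step $\gamma$ satisfy $\bE\bigl[\prod_{j=0}^{p-1}(\Sigma_{n+1}+j\gamma)\,|\,\cF_n\bigr]=(1+p\gamma/(n-\alpha))\prod_{j=0}^{p-1}(\Sigma_n+j\gamma)$, whence $\bE[(\lim\Sigma_n/n^\gamma)^p]=\gamma^p\,\Gamma(w/\gamma+p)\Gamma(k-\alpha)/\Gamma(w/\gamma)\Gamma(k-\alpha+p\gamma)$ with $w=k(1-\alpha)+\ell\gamma$, and this does factor as $\gamma^p\,\bE[L_k^p]\,\bE[W_k^{p\gamma}]$ for the stated laws, with Carleman's condition closing the argument; so the plan is viable, but as written it stops exactly at the point where the work is.
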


In fact, $1-W_k$ captures the total limiting leaf proportions of
subtrees that are attached on the vertices of $T_k$, and we can
study further how this is distributed between the branch points, see
Section \ref{secbw}.

We conclude this introduction by giving an alternative description of the alpha-gamma
model obtained by adding colouring rules to the alpha model growth rules
(i)$^{\rm F}$-(iii)$^{\rm F}$, so that in $T_n^{\rm col}$ each edge except
those adjacent to leaves has either a blue or a red colour mark.
\begin{enumerate}\item[(iv)$^{\rm col}$] To turn $T_{n+1}$ into a colour-marked tree $T_{n+1}^{\rm col}$, keep the colours of $T_n^{\rm col}$ and do the following: \begin{itemize} \item if an edge $a_n\rightarrow c_n$ adjacent to a leaf was selected, mark $a_n\rightarrow b_n$ blue;
   \item if a red edge $a_n\rightarrow c_n$ was selected, mark both $a_n\rightarrow b_n$ and $b_n\rightarrow c_n$ red;
  \item if a blue edge $a_n\rightarrow c_n$ was selected, mark
    $a_n\rightarrow b_n$ blue; mark $b_n\rightarrow c_n$ red with probability
    $c$ and blue with probability $1-c$;
  \end{itemize}
\end{enumerate}
When $(T_n^{\rm col},n\ge 1)$ has been grown according to (i)$^{\rm F}$-(iii)$^{\rm F}$ and (iv)$^{\rm col}$, crush all red edges, i.e.
\begin{enumerate}\item[(cr)] identify all vertices connected via red edges,
remove all red edges and remove the remaining colour marks; denote the resulting sequence of trees by $(\widetilde{T}_{n},n\ge 1)$;
\end{enumerate}

\begin{prop}\label{prop6} Let $(\widetilde{T}_n,n\ge 1)$ be a sequence of trees according to growth
  rules {\rm (i)}$^{\rm F}$-{\rm(iii)}$^{\rm F}$,{\rm(iv)}$^{\rm col}$ and crushing rule {\rm (cr)}. Then $(\widetilde{T}_n,n\ge 1)$ is a sequence of
  alpha-gamma trees with $\gamma=\alpha(1-c)$.
\end{prop}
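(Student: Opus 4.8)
The plan is to show, by induction on $n$, that the crushed coloured trees $(\widetilde T_n,n\ge 1)$ evolve exactly according to the alpha-gamma growth rules {\rm (i)--(iii)} with $\gamma=\alpha(1-c)$; since $c\in[0,1]$ this covers precisely the admissible range $0\le\gamma\le\alpha$. First I would make the crushing map $T_n^{\rm col}\mapsto\widetilde T_n$ explicit: the leaves of $\widetilde T_n$ are the leaves of $T_n$, the inner edges of $\widetilde T_n$ are exactly the blue edges of $T_n^{\rm col}$, and each branch point of $\widetilde T_n$ is a maximal red-connected component (a ``blob'') of binary branch points of $T_n$. A short degree count then shows that a blob yielding a vertex of degree $k+1$ in $\widetilde T_n$ consists of $k-1$ binary nodes joined by $k-2$ red edges; in particular the number of internal red edges of a blob is the deterministic function $\deg-3$ of the degree of the corresponding vertex. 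I would also record the structural fact that the root edge is always blue --- it is blue in $T_2^{\rm col}$, and rule {\rm (iv)}$^{\rm col}$ always recolours the upper new edge $a_n\rightarrow b_n$ blue --- so that every blob, including the top one, has a unique blue parent edge whose lower (child-side) endpoint lies in it.

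Next I would run through the possible outcomes of one coloured growth step and read off their images under crushing. A selected leaf edge (weight $1-\alpha$) creates a new binary branch point carrying the old and new leaves, which is precisely a leaf-edge split of $\widetilde T_n$. A selected blue inner edge that stays blue (weight $\alpha(1-c)=\gamma$) inserts a new binary branch point into that inner edge, an inner-edge split of $\widetilde T_n$. The remaining outcomes --- a selected internal red edge (weight $\alpha$ each), or a selected blue inner edge that turns red (weight $\alpha c$) --- all absorb the new branch point into an existing blob, and so realise the alpha-gamma move ``add a new leaf to a vertex $v$''. Aggregating the weights of all outcomes that attach the new leaf to a fixed vertex $v$ of degree $k+1$ --- its $k-2$ internal red edges, plus its single blue parent edge turning red --- gives total weight $(k-2)\alpha+\alpha c=(k-1)\alpha-\gamma$. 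Comparing with the alpha-gamma weights $1-\alpha$, $\gamma$ and $(k-1)\alpha-\gamma$ of rule {\rm (i)} shows that the unnormalised one-step transition weights of $(\widetilde T_n)$ coincide move-by-move with the alpha-gamma dynamics; in particular the two total weights agree (both equal $n-\alpha$), so the normalised transition probabilities agree.

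It remains to upgrade this to equality of the laws of the whole sequences. The key point, which is exactly where the two structural facts are used, is that the aggregated probability of each move depends on $T_n^{\rm col}$ only through $\widetilde T_n$: the number of red edges in each blob is fixed by the vertex degree, and parent edges are always blue, so no residual colour information influences the transition. Writing $P$ for the alpha-gamma kernel, the previous paragraph gives that the conditional law of $\widetilde T_{n+1}$ given $T_n^{\rm col}$ is $P(\widetilde T_n,\cdot)$. Since $(T_n^{\rm col})$ is Markov and each earlier tree is recovered deterministically by reversing the last growth step, we have $\sigma(\widetilde T_1,\ldots,\widetilde T_n)\subseteq\sigma(T_n^{\rm col})$, and conditioning down gives $\mathbb{P}(\widetilde T_{n+1}=\cdot\mid\widetilde T_1,\ldots,\widetilde T_n)=P(\widetilde T_n,\cdot)$. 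Hence $(\widetilde T_n)$ is Markov with kernel $P$; together with the trivial base case $\widetilde T_1=T_1$ this identifies its law with that of the alpha-gamma trees.

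The main obstacle I anticipate is the bookkeeping of the second and third paragraphs: checking cleanly that $T_n^{\rm col}\mapsto\widetilde T_n$ collapses the four coloured transition types into exactly the three alpha-gamma moves and carries no leftover randomness. Getting the two facts ``the root edge stays blue'' and ``internal red edges $=\deg-3$'' right is the crux, since it is precisely these that turn the naive count $(k-2)\alpha$ for a degree-$(k+1)$ vertex into the required weight $(k-1)\alpha-\gamma$.
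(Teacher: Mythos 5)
Your proposal is correct and follows essentially the same route as the paper: both arguments verify that the one-step transition probabilities of the crushed trees, conditionally on the coloured tree $T_n^{\rm col}$, reproduce the alpha-gamma weights $1-\alpha$, $\alpha(1-c)=\gamma$ and $(k-2)\alpha+c\alpha=(k-1)\alpha-\gamma$ and depend on $T_n^{\rm col}$ only through $\widetilde{T}_n$. Your blob/degree bookkeeping and the final conditioning-down step just make explicit what the paper states tersely in its concluding sentence.
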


The structure of this paper is as follows. In Section 2 we study the discrete trees grown according to the growth rules
(i)-(iii) and establish Proposition \ref{prop6} and Proposition \ref{prop1} as well as the sampling consistency claimed in Theorem \ref{thm2}. Section 3 is devoted to the limiting CRTs, we obtain the dislocation measure stated in Theorem \ref{thm2} and deduce Corollary \ref{dconv}
and Proposition \ref{prop4}. In Section 4 we study the convergence of labelled trees and prove Theorem \ref{LE}.

\section{Sampling consistent splitting rules for the alpha-gamma trees}
\subsection{Notation and terminology of partitions and discrete fragmentation trees}\label{trees}
For $B\subseteq\mathbb{N}$, let $\mathcal{P}_B$ be the \em set of
partitions of $B$ \em into disjoint non-empty subsets called \em blocks\em.
Consider a probability space $(\Omega,\mathcal{F},\mathbb{P})$,
which supports a $\mathcal{P}_B$-valued random partition $\Pi_B$. If
the probability function of $\Pi_B$ only
depends on its block sizes, we call it \em exchangeable\em. Then 
$$ \mathbb{P}(\Pi_B=\{A_1,\ldots ,A_k\})=p(\#A_1,\ldots,\#A_k)\qquad\mbox{for each
partition $\pi=\{A_1,\ldots ,A_k\}\in\mathcal{P}_B$,}$$ 
where $\#A_j$ denotes the block size, i.e. the number of elements of $A_j$. This function
$p$ is called  the \em exchangeable partition probability function \em
(EPPF) of $\Pi_B$. Alternatively, a random partition $\Pi_B$ is
exchangeable if its distribution is invariant under the natural
action on partitions of $B$ by the symmetric group of permutations
of $B$. 

Let $B\subseteq\mathbb{N}$, we say that a partition $\pi\in
\mathcal{P}_B$ is \em finer than \em $\pi'\in \mathcal{P}_B$, and write
$\pi\preceq\pi'$, if any block of $\pi$ is included in some block of
$\pi'$. This defines a partial order $\preceq$ on $\mathcal{P}_B$. A
process or a sequence with values in $\mathcal{P}_B$ is called
refining if it is decreasing for this partial order.
Refining partition-valued processes are naturally related to trees.
Suppose that $B$ is a finite subset of $\mathbb{N}$ and
$\mathbf{t}$ is a collection of subsets of $B$ with an additional
member called the {\sc root} such that
\begin{itemize}
   \item we have $B\in\mathbf{t}$; we call $B$ the \em common ancestor \em of
   $\mathbf{t}$;
   \item we have $\{i\}\in \mathbf{t}$ for all $i\in B$; we call $\{i\}$ 
   a \em leaf \em of $\mathbf{t}$;
   \item for all $A\in\mathbf{t}$ and $C\in \mathbf{t}$, we have either $A\cap C=\varnothing$,
   or $A\subseteq C$ or $C\subseteq A$.
\end{itemize}
If $A\subset C$, then $A$ is called a \em descendant \em of $C$, or $C$ 
an \em ancestor \em of $A$. If for all $D\in \mathbf{t}$ with
$A\subseteq D\subseteq C$ either $A=D$ or $D=C$, we call $A$ a \em child \em
of $C$, or $C$ the \em parent \em of $A$ and denote $C\rightarrow A$. If we equip 
$\mathbf{t}$ with the parent-child relation and also $\mbox{\sc root}\rightarrow B$, then
$\mathbf{t}$ is a rooted connected acyclic graph, i.e. a combinatorial tree. We
denote the space of such trees $\mathbf{t}$ by $\mathbb{T}_B$ and
also $\bT_n=\bT_{[n]}$. For $\ft\in\bT_B$ and $A\in\ft$, the rooted 
subtree $\fs_A$ of $\ft$ with common ancestor $A$ is given by $\fs_A=\{\mbox{\sc root}\}\cup\{C\in\ft:C\subseteq A\}\in\bT_A.$
In particular, we consider the \em subtrees $\fs_j=\fs_{A_j}$ of the common 
ancestor $B$ of $\ft$\em, i.e. the subtrees whose common ancestors $A_j$, $j\in[k]$, are the children of $B$. In other words, $\fs_1,\ldots,\fs_k$ are the rooted connected components of $\ft\setminus\{B\}$.

Let $(\pi(t), t\geq0)$ be a $\mathcal{P}_B$-valued refining process
for some finite $B\subset\mathbb{N}$ with
$\pi(0)=\mathbf{1}_B$ and $\pi(t)=\mathbf{0}_B$ for some
$t>0$, where $\mathbf{1}_B$ is the trivial partition into a single
block $B$ and $\mathbf{0}_B$ is the partition of $B$ into
singletons. We define $\mathbf{t}_\pi=\{\text{\sc
root}\}\cup\{A\subset B: A\in \pi(t)\mbox{ for some $t\geq 0$}\}$ as the
associated \textit{labelled fragmentation tree}.

\begin{defi}\label{lab}\rm Let $B\subset\bN$ with $\#B=n$ and $\ft\in\bT_B$. We associate the 
  relabelled tree 
	$$\ft^\sigma=\{\mbox{\sc root}\}\cup\{\sigma(A):A\in\ft\}\in\bT_n,$$
  for any bijection $\sigma:B\rightarrow[n]$, and the combinatorial tree shape of $\ft$ as the equivalence class 
    $$\ft^\circ=\{\ft^\sigma|\sigma:B\rightarrow[n]\mbox{ bijection}\}\subset\bT_n.$$
  We denote by $\bT_n^\circ=\{\ft^\circ:\ft\in\bT_n\}=\{\ft^\circ:\ft\in\bT_B\}$ the collection of all tree shapes with $n$ leaves, 
  which we will also refer to in their own right as \em unlabelled fragmentation trees\em.
% \prime\in\subset\bT_{\#B}$ as  be a $\mathcal{P}_B$-valued refining
%process associated with  fragmentation trees $\mathbf{t}_\pi$.
%Denoted by $\mathbf{t}_{\#B}^\circ$, we call a set of fragmentation
%trees tree shapes of $\mathbf{t}_\pi$ or  unlabelled fragmentation
%trees associated with $\mathbf{t}_\pi$, if $
%\mathbf{t}_{\#B}^\circ=\{\mathbf{t}_{\pi'}: \pi'$ is a
%$\mathcal{P}_B$-valued refining process and there exists a
%permutation $\sigma$ on $B$ such that $\pi\circ\sigma=\pi'\}$. In
%this sense,  fragmentation trees are also called  labelled ones.
\end{defi}

Note that the number of subtrees of the common ancestor of $\ft\in\bT_n$ and the numbers of leaves in these subtrees are invariants of the equivalence class $\ft^\circ\subset\bT_n$. If $\mathbf{t}^\circ\in\bT_n^\circ$ has subtrees $\mathbf{s}_1^\circ,\ldots,\mathbf{s}_k^\circ$ with $n_1\geq\ldots\geq n_k\geq 1$ leaves, we say that $\mathbf{t}^\circ$ is formed by \em joining
together \em $\mathbf{s}_1^\circ,\ldots,\mathbf{s}_k^\circ$, denoted by
$\mathbf{t}^\circ=\mathbf{s}_1^\circ*\ldots*\mathbf{s}_k^\circ$. We
call the \em composition \em $(n_1,\ldots,n_k)$ of $n$ the \em first split \em of
$\mathbf{t}_n^\circ$. 

With this notation and terminology, a sequence of random trees $T_n^\circ\in\bT_n^\circ$, $n\ge 1$, has the \em Markov branching
property \em if, for all $n\ge 2$,
%\em with splitting rule $q$, if for all $n\ge 2$ and all $\fs_i^\circ\in\bT_{n_j}^\circ$, $j\in[k]$ with $k\ge 2$ and %$n_1+\ldots+n_k=n$
%\beqs \bP(T_n^\circ=\fs_1^\circ*\ldots*\fs_k^\circ)=q(n_1,\ldots,n_k)\prod_{j\in[k]}\bP(T_{n_j}^\circ=\fs_j^\circ).
%\eeqs
the tree $T_n^\circ$ has the same distribution as $S_1^\circ*\ldots*S_{K_n}^\circ$, where $N_1\ge\ldots\ge N_{K_n}\ge 1$ form a
random composition of $n$ with $K_n\ge 2$ parts, and conditionally given $K_n=k$ and $N_j=n_j$, the trees $S_j^\circ$, $j\in[k]$, are
independent and distributed as $T_{n_j}^\circ$, $j\in[k]$.  

\subsection{Colour-marked trees and the proof of Proposition \ref{prop6}}
The growth rules (i)$^{\rm F}$-(iii)$^{\rm F}$ construct binary combinatorial trees $T_n^{\rm bin}$ with vertex set 
\beqs V=\{\mbox{\sc root}\}\cup[n]\cup\{b_1,\ldots,b_{n-1}\}
\eeqs
and an edge set $E\subset V\times V$. We write $v\rightarrow w$ if $(v,w)\in E$. In Section \ref{trees}, we identify leaf $i$ with 
the set $\{i\}$ and vertex $b_i$ with $\{j\in[n]:b_i\rightarrow\ldots\rightarrow j\}$, the edge set $E$ then being identified by the parent-child relation. In this framework, a \em colour mark \em for an edge 
$v\rightarrow b_i$ can be assigned to the vertex $b_i$, so that a \em coloured binary tree \em as constructed in 
(iv)$^{\rm col}$ can be represented by
\beqs V^{\rm col}=\{\mbox{\sc root}\}\cup[n]\cup\{(b_1,\chi_n(b_1)),\ldots,(b_{n-1},\chi_n(b_{n-1}))\}
\eeqs
for some $\chi_n(b_i)\in\{0,1\}$, $i\in[n-1]$, where $0$ represents red and $1$ represents blue.
 
%as well as (i)$^{\rm F}$-(iii)$^{\rm F}$,(iv)$^{\rm col}$,(cr) for 
%$(\widetilde{T}_n,n\ge 1)$.

%We introduce an alternative description of the alpha-gamma tree at
%the end of the Introduction in this paper. Now we give the proof of
%Proposition \ref{prop6}

\begin{proof}[Proof of Proposition \ref{prop6}]
We only need to check that the growth rules (i)$^{\rm F}$-(iii)$^{\rm F}$ and (iv)$^{\rm col}$ for $(T_n^{\rm col},n\ge 1)$ imply
that the uncoloured multifurcating trees $(\widetilde{T}_n,n\ge 1)$ obtained from $(T_n^{\rm col},n\ge 1)$ via crushing (cr)
satisfy the growth rules (i)-(iii). Let therefore $\ft^{\rm col}_{n+1}$ be a tree with $\bP(T_{n+1}^{\rm col}=\ft^{\rm col}_{n+1})>0$. It is easily seen that there is a unique tree $\ft^{\rm col}_n$, a unique insertion edge $a_n^{\rm col}\rightarrow c_n^{\rm col}$ in $\ft^{\rm col}_n$ and, if any, a unique colour $\chi_{n+1}(c_n^{\rm col})$ to create $\ft^{\rm col}_{n+1}$ from $\ft^{\rm col}_n$. Denote the trees obtained from 
$\ft^{\rm col}_n$ and $\ft^{\rm col}_{n+1}$ via crushing (cr) by $\ft_n$ and $\ft_{n+1}$. If $\chi_{n+1}(c_n^{\rm col})=0$, 
denote by $k+1\ge 3$ the degree of the branch point of $\ft_n$ with which $c_n^{\rm col}$ is identified in the first step of the crushing (cr). 
\begin{itemize}\item If the insertion edge is a leaf edge ($c_n^{\rm col}=i$ for some $i\in[n]$), we obtain 
  $$\bP(\widetilde{T}_{n+1}=\ft_{n+1}|\widetilde{T}_n=\ft_n,T_n^{\rm col}=\ft_n^{\rm col})=(1-\alpha)/(n-\alpha).$$
  \item If the insertion edge has colour blue ($\chi_n(c_n^{\rm col})=1$) and also $\chi_{n+1}(c_n^{\rm col})=1$, we obtain
  $$\bP(\widetilde{T}_{n+1}=\ft_{n+1}|\widetilde{T}_n=\ft_n,T_n^{\rm col}=\ft_n^{\rm col})=\alpha(1-c)/(n-\alpha).$$
  \item If the insertion edge has colour blue ($\chi_n(c_n^{\rm col})=1$), but $\chi_{n+1}(c_n^{\rm col})=0$, or if the insertion
  edge has colour red ($\chi_n(c_n^{\rm col})=0$, and then necessarily $\chi_{n+1}(c_n^{\rm col})=0$ also), we obtain 
  $$\bP(\widetilde{T}_{n+1}=\ft_{n+1}|\widetilde{T}_n=\ft_n,T_n^{\rm col}=\ft_n^{\rm col})=(c\alpha+(k-2)\alpha)/(n-\alpha),$$ 
  because apart from $a_n^{\rm col}\rightarrow c_n^{\rm col}$, there are $k-2$ other edges in $\ft^{\rm col}_n$, where insertion
  and crushing also create $\ft_{n+1}$. 
\end{itemize} 
Because these conditional probabilities do not depend on $\ft_n^{\rm col}$ and have the form required, we conclude that $(\widetilde{T}_n,n\ge 1)$ obeys the 
growth rules (i)-(iii) with $\gamma=\alpha(1-c)$.
% prove that the growth rules of the colour-marked
%trees is identical to that of the alpha-gamma trees under crushing
%rules and the condition that $\gamma=\alpha(1-c)$. Given a $n$-leaf
%colour-marked alpha tree, only the red edges will be crushed under
%the crushing rules. Thus inserting a new leaf to the leaf edge in
%the tree after crushing is exactly the same as that in the
%colour-marked alpha tree, the probability of which is
%$(1-\alpha)/(n-\alpha)$. 
%Inserting a new leaf to an inner edge in
%the tree after crushing is equivalent to inserting a new leaf to a
%blue edge $a_n\rightarrow c_n$ and marking both $a_n\rightarrow b_n$
%and $b_n\rightarrow c_n$ blue, the probability of which is
%$\alpha(1-c)/(n-\alpha)$. 
%Inserting a new leaf to a $k+1$-degree
%branch point in the colour-marked alpha tree after crushing is
%equivalent to inserting a new leaf to $k-2$ adjacent red edges or
%the blue edge $a_n\rightarrow c_n$ immediately below these $k-2$
%adjacent red edges and mark $b_n\rightarrow c_n$ red, the total
%probability of which is $((k-2)\alpha+c\alpha)/(n-\alpha)$. Now
%compare the above probability with that in the growth procedure in
%the alpha-gamma tree, we get that the colour-marked alpha tree after
%crushing is the alpha-gamma tree with $gamma=\alpha(1-c)$.
\end{proof}

\subsection{The Chinese Restaurant Process}
%Now we turn to see the splitting rules of the alpha-gamma trees.
%Before proving Proposition \ref{prop1}, we have to introduce 
An important tool in this paper is the Chinese Restaurant Process (CRP), a 
partition-valued process $(\Pi_n,n\ge 1)$ due to Dubins and Pitman, see \cite{csp}, which
generates the Ewens-Pitman two-parameter family of exchangeable 
random partitions $\Pi_\infty$ of $\mathbb{N}$. In the restaurant framework,
each block of a partition is represented by a \em table \em and each 
element of a block by a \em customer \em at a table. The construction rules 
%of the Chinese Restaurant Process 
are the
following. The first customer sits at the first table and the
following customers will be seated at an occupied table or a new one.
Given $n$ customers at $k$ tables with $n_j\ge 1$
customers at the $j$th table, customer $n+1$ will be placed at
the $j$th table with probability $(n_j-\alpha)/(n+\theta)$, and
at a new table with probability
$(\theta+k\alpha)/(n+\theta)$. The parameters $\alpha$ and $\theta$ 
can be chosen as either $\alpha<0$ and $\theta=-m\alpha$ for
some $m\in\bN$ or $0\leq \alpha\leq 1$ and $\theta>-\alpha$. We
refer to this process as the CRP with 
$(\alpha,\theta)$-\em seating plan\em.

In the CRP $(\Pi_n,n\ge 1)$ with $\Pi_n\in\cP_{[n]}$, 
we can study the block sizes, which leads us to consider the 
proportion of each table relative to the total number of customers. These proportions 
converge to \textit{limiting frequencies} as follows. 

\begin{lemm}[Theorem 3.2 in \cite{csp}]\label{crp1}
For each pair of parameters $(\alpha,\theta)$ subject to the
constraints above, the Chinese restaurant with the $(\alpha,\theta)$-seating plan generates an exchangeable random partition $\Pi_\infty$
of $\mathbb{N}$. The corresponding EPPF is
$$p_{\alpha,\theta}^{\rm PD}(n_1,\ldots ,n_k)=\frac{\alpha^{k-1}\Gamma(k+\theta/\alpha)\Gamma(1+\theta)}
{\Gamma(1+\theta/\alpha)\Gamma(n+\theta)}\prod_{i=1}^k\frac{\Gamma(n_i-\alpha)}{\Gamma(1-\alpha)},\quad n_i\ge 1,i\in[k];k\ge 1:\ \mbox{$\sum n_i=n$,}$$
boundary cases by continuity. The corresponding limiting frequencies of block sizes, in size-biased order
of least elements, are ${\rm GEM}_{\alpha,\theta}$ and can be represented as
$$(\tilde{P_1},\tilde{P_2},\ldots )=(W_1,\overline{W}_1W_2,\overline{W}_1\overline{W}_2W_3,\ldots )$$
  where the $W_i$ are independent, $W_i$ has ${\rm beta}(1-\alpha,
  \theta+i\alpha)$ distribution, and $\overline{W}_i:=1-W_i.$ The distribution 
  of the associated ranked sequence of limiting frequencies is Poisson-Dirichlet ${\rm PD}_{\alpha,\theta}$.

\end{lemm}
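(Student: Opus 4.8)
The plan is to prove the three assertions in turn: exchangeability together with the stated EPPF, the stick-breaking representation of the limiting frequencies, and the Poisson--Dirichlet law of their ranked rearrangement.

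First I would compute the EPPF directly from the sequential seating probabilities. Fix a partition $\pi=\{A_1,\ldots,A_k\}\in\cP_{[n]}$ with block sizes $n_1,\ldots,n_k$, and multiply the conditional probabilities of the $n$ successive seating decisions. The denominators contribute $\prod_{m=1}^{n-1}(m+\theta)=\Gamma(n+\theta)/\Gamma(1+\theta)$. Opening the $j$th table (for $2\le j\le k$) contributes the numerator factor $\theta+(j-1)\alpha$, so the new-table factors multiply to $\prod_{j=1}^{k-1}(\theta+j\alpha)=\alpha^{k-1}\Gamma(k+\theta/\alpha)/\Gamma(1+\theta/\alpha)$. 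The successive arrivals at a fixed table of final size $n_i$ contribute $\prod_{l=1}^{n_i-1}(l-\alpha)=\Gamma(n_i-\alpha)/\Gamma(1-\alpha)$. Collecting these three pieces reproduces exactly $p^{\rm PD}_{\alpha,\theta}(n_1,\ldots,n_k)$. Crucially the product depends only on the block sizes, so $\Pi_n$ is exchangeable; and since the seating of customers $1,\ldots,n$ is unaffected by later customers, the family $(\Pi_n,n\ge 1)$ is consistent under restriction, whence Kolmogorov's extension theorem yields an exchangeable $\Pi_\infty$ on $\bN$ with this EPPF.

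Second, for the limiting frequencies I would track the first table as a two-colour Pólya urn. Writing $N_1(n)$ for its occupancy after $n$ customers, customer $n+1$ joins table $1$ with probability $(N_1(n)-\alpha)/(n+\theta)$, so lumping all other tables together with the new-table option the two colours carry weights $N_1(n)-\alpha$ and $(n-N_1(n))+\theta+\alpha$; this is a classical Pólya urn started from weights $(1-\alpha,\theta+\alpha)$, and hence $N_1(n)/n\to\tilde P_1=W_1\sim{\rm beta}(1-\alpha,\theta+\alpha)$ almost surely. The key structural observation is that, conditionally on the set of customers \emph{not} seated at table $1$, their induced seating is again a CRP with $\theta$ replaced by $\theta+\alpha$: joining an existing residual table of size $n_j$ still has relative weight $n_j-\alpha$, while opening a new table has relative weight $\theta+k\alpha=(\theta+\alpha)+(k-1)\alpha$, which is exactly the new-table weight of a CRP with parameters $(\alpha,\theta+\alpha)$ having $k-1$ residual tables. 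Iterating, the frequency of the $i$th table opened, renormalised by the mass not in the first $i-1$ tables, is $W_i\sim{\rm beta}(1-\alpha,\theta+i\alpha)$. Assembling the nested limits gives $(\tilde P_1,\tilde P_2,\ldots)=(W_1,\overline{W}_1W_2,\ldots)$ with independent $W_i$, which is ${\rm GEM}_{\alpha,\theta}$ by definition; since a table's least customer is the one who opens it, the order of appearance is the order of least elements, so these are the frequencies in size-biased order of least elements. Finally, ${\rm PD}_{\alpha,\theta}$ is by definition the law of the decreasing rearrangement of a ${\rm GEM}_{\alpha,\theta}$ sequence, yielding the last claim.

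The main obstacle will be making the inductive step fully rigorous: one must justify that the residual CRP with shifted parameter is genuinely independent of $W_1$ (so that the $W_i$ are independent with the stated beta laws) and that the almost-sure limiting frequencies of all tables exist simultaneously and are correctly captured by the nested urn limits. This requires either a martingale/exchangeability argument for the simultaneous convergence (via Kingman's theory of exchangeable partitions and its paintbox correspondence) or a careful conditional decomposition of the CRP at each stage. The boundary cases should be handled separately by continuity or direct inspection --- $\alpha=0$ reducing to the Ewens/Hoppe urn, and $\alpha<0$ with $\theta=-m\alpha$ producing at most $m$ tables and hence a finite-dimensional Dirichlet frequency vector.
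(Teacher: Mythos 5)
Your proposal is correct, but note that the paper does not prove this statement at all: it is imported verbatim as Theorem 3.2 of Pitman's \emph{Combinatorial Stochastic Processes} (the reference \cite{csp}), so there is no in-paper argument to compare against. Your argument --- the telescoping product of seating probabilities for the EPPF, and the table-deletion/P\'olya-urn decomposition giving independent ${\rm beta}(1-\alpha,\theta+i\alpha)$ stick-breaking factors --- is essentially the standard proof found in that reference, and the obstacles you flag (simultaneous a.s.\ existence of frequencies via exchangeability/paintbox, independence of the residual $(\alpha,\theta+\alpha)$ restaurant from $W_1$, and the degenerate boundary cases) are exactly the points that need care there.
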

We also associate with the EPPF $p_{\alpha,\theta}^{\rm PD}$ the distribution $q_{\alpha,\theta}^{\rm PD}$ of block sizes in
decreasing order via (\ref{spliteppf}) and, because the Chinese restaurant EPPF is \em not \em the EPPF of a splitting rule 
leading to $k\ge 2$ block (we use notation $q_{\alpha,\theta}^{\rm PD^*}$ for the splitting rules induced by conditioning on $k\ge 2$ blocks), but can lead to a single block, we also set $q_{\alpha,\theta}^{\rm PD}(n)=p_{\alpha,\theta}^{\rm PD}(n)$.

The asymptotic properties of the number $K_n$ of blocks of $\Pi_n$ %, the sequence
%$(K_n)_{n\geq1}$ is a Markov chain, starting at $K_1=1$, with
%increments in $\{0,1\}$, and inhomogeneous transition probabilities
%\begin{eqnarray}
%  \mathbb{P}_{\alpha,\theta}(K_{n+1}=k+1|K_1,\ldots ,K_n=k)&=&\frac{k\alpha+\theta}{n+\theta},\nonumber\\
%  \mathbb{P}_{\alpha,\theta}(K_{n+1}=k|K_1,\ldots ,K_n=k)&=&\frac{n-k\alpha}{n+\theta}.\nonumber
%\end{eqnarray}
%Pitman also indicates that the asymptotic properties of $K_n$ 
under the $(\alpha,\theta)$-seating plan depend on $\alpha$: if $\alpha<0$ and $\theta=-m\alpha$ for some $m\in\bN$, then $K_n=m$ for all sufficiently large $n$ a.s.; if $\alpha=0$ and $\theta>0$, then $\lim_{n\rightarrow\infty}K_n/\log n=\theta$ a.s.
The most relevant case for us is $\alpha>0$.

\begin{lemm}[Theorem 3.8 in \cite{csp}]\label{crp2}
For $0<\alpha<1$, $\theta>-\alpha$, ,
$$\frac{K_n}{n^\alpha}\rightarrow S\qquad\mbox{a.s. as $n\rightarrow\infty$,}$$ 
where $S$ has a continuous density on $(0,\infty)$ given by
$$ \frac{d}{ds}\mathbb{P}(S\in ds)=\frac{\Gamma(\theta+1)}{\Gamma(\theta/\alpha+1)}s^{-\theta/\alpha}g_\alpha(s),$$ 
and $g_\alpha$ is the density of the Mittag-Leffler distribution with $p$th moment $\Gamma(p+1)/\Gamma(p\alpha+1)$.
\end{lemm}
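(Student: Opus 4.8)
The plan is to treat $(K_n)_{n\ge 1}$ as an inhomogeneous Markov chain and to read off both the scaling limit and its law from an explicit one‑parameter family of nonnegative martingales. Let $\cF_n$ be the $\sigma$-field generated by the first $n$ seatings. The $(\alpha,\theta)$-seating rule gives the one-step mean $\bE[K_{n+1}\mid\cF_n]=K_n+(\theta+\alpha K_n)/(n+\theta)$. Motivated by this, I would set $g_\lambda(k)=\Gamma(\theta/\alpha+\lambda/\alpha+k)/\Gamma(\theta/\alpha+k)$ and $c_n^{(\lambda)}=\Gamma(n+\theta)/\Gamma(n+\theta+\lambda)$, and define $M_n^{(\lambda)}=c_n^{(\lambda)}g_\lambda(K_n)$ for $\lambda>-\theta-\alpha$, which keeps $g_\lambda(k)>0$ for all $k\ge 1$ because $\theta>-\alpha$. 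The key algebraic identity is $g_\lambda(k+1)/g_\lambda(k)=(\theta+k\alpha+\lambda)/(\theta+k\alpha)$, which, combined with the one-step mean and with $c_{n+1}^{(\lambda)}/c_n^{(\lambda)}=(n+\theta)/(n+\theta+\lambda)$, shows that each $M_n^{(\lambda)}$ is an $(\cF_n)$-martingale. Guessing this martingale is the one genuinely creative step; its verification is a short computation.

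For the almost sure convergence I would take the special value $\lambda=\alpha$, for which $g_\alpha$ collapses to the linear function $g_\alpha(k)=k+\theta/\alpha$, so that $M_n^{(\alpha)}=c_n^{(\alpha)}(K_n+\theta/\alpha)$ is a nonnegative martingale and hence converges almost surely. Since Stirling's formula gives $c_n^{(\alpha)}=\Gamma(n+\theta)/\Gamma(n+\theta+\alpha)\sim n^{-\alpha}$, rearranging $K_n/n^\alpha=M_n^{(\alpha)}/(n^\alpha c_n^{(\alpha)})-\theta/(\alpha n^\alpha)$ yields $K_n/n^\alpha\to S$ almost surely, where $S:=\lim_n M_n^{(\alpha)}$.

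It remains to identify the law of $S$, and here the real work is in upgrading almost sure convergence to convergence of moments. Because $g_\lambda(k)\sim k^{\lambda/\alpha}$ and $c_n^{(\lambda)}\sim n^{-\lambda}$, the constancy of $\bE[M_n^{(\lambda)}]=c_1^{(\lambda)}g_\lambda(1)$ translates into $\sup_n\bE[(K_n/n^\alpha)^{\lambda/\alpha}]<\infty$ for every $\lambda>0$; thus $K_n/n^\alpha$ is bounded in every $L^q$, the families $(M_n^{(\lambda)})_n$ are uniformly integrable, and $M_n^{(\lambda)}\to S^{\lambda/\alpha}$ in $L^1$. Taking expectations and writing $p=\lambda/\alpha$ gives, for all $p\ge 0$,
\[
\bE[S^p]=\frac{\Gamma(1+\theta)}{\Gamma(1+\theta+\alpha p)}\cdot\frac{\Gamma(1+\theta/\alpha+p)}{\Gamma(1+\theta/\alpha)}.
\]
At $\theta=0$ this reduces to $\Gamma(1+p)/\Gamma(1+\alpha p)$, the Mittag--Leffler moments, so in general these are exactly the moments of the polynomially tilted Mittag--Leffler density displayed in the statement, with the stated normalising constant $\Gamma(\theta+1)/\Gamma(\theta/\alpha+1)$. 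The main obstacle is this final identification: one must check that the moment sequence determines the law uniquely. Since the moments grow only like $(p!)^{1-\alpha}$, Carleman's condition $\sum_{p\ge 1}\bE[S^{2p}]^{-1/(2p)}=\infty$ holds precisely because $0<\alpha<1$, so the moment problem is determinate and the density formula follows. Finally, applying the same uniform-integrability estimate to a small negative $\lambda\in(-\theta-\alpha,0)$ gives $\bE[S^{-r}]<\infty$ for some $r>0$, whence $S>0$ almost surely and the density is supported on $(0,\infty)$ as claimed.
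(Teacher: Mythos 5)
The paper offers no proof of this lemma --- it is imported verbatim as Theorem 3.8 of Pitman's book \cite{csp} --- so there is nothing internal to compare against; judged on its own, your argument is correct and is essentially the standard proof from that source: the family $M_n^{(\lambda)}=c_n^{(\lambda)}\Gamma(\theta/\alpha+\lambda/\alpha+K_n)/\Gamma(\theta/\alpha+K_n)$ is exactly the gamma-ratio martingale used there, $\lambda=\alpha$ gives the a.s.\ limit, and the moment identity $\bE[S^p]=\frac{\Gamma(1+\theta)}{\Gamma(1+\theta+\alpha p)}\frac{\Gamma(1+\theta/\alpha+p)}{\Gamma(1+\theta/\alpha)}$ together with Carleman determines the law. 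Your uniform-integrability step implicitly uses that $K_n\to\infty$ a.s.\ (so that $g_\lambda(K_n)\sim K_n^{\lambda/\alpha}$); this is immediate from conditional Borel--Cantelli since the new-table probability is at least $(\theta+\alpha)/(n+\theta)$, but it deserves a sentence. One caveat: the moments you obtain are those of the density proportional to $s^{+\theta/\alpha}g_\alpha(s)$, not $s^{-\theta/\alpha}g_\alpha(s)$ as displayed in the lemma; the displayed exponent is a sign typo (the paper itself uses the positive exponent when it applies this lemma in Theorem \ref{LE}), so you should not claim your moments match ``the density displayed in the statement'' verbatim, but rather that they identify the corrected version.
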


As an extension of the CRP, Pitman and Winkel in
\cite{PW2} introduced the \em ordered \em CRP. Its
seating plan is as follows. The tables are ordered from left to
right. Put the second table to the right of the first with
probability $\theta/(\alpha+\theta)$ and to the left with
probability $\alpha/(\alpha+\theta)$. %For the third table, put it
%\begin{itemize}
%\item to the left of the first two tables with probability
%$\alpha/(2\alpha+\theta)$
%\item between the first two tables with probability
%$\alpha/(2\alpha+\theta)$
%\item to the right of the first two tables with probability
%$\theta/(2\alpha+\theta)$.
%\end{itemize}
Given $k$ tables, put the $(k+1)$st table to the right of the
right-most table with probability $\theta/(k\alpha+\theta)$ and to the left of the left-most or between two adjacent tables  
with probability $\alpha/(k\alpha+\theta)$ each.

A composition of $n$ is a sequence $(n_1,\ldots,n_k)$ of positive
numbers with sum $n$. A sequence of random compositions $\cC_n$ of
$n$ is called \textit{regenerative} if conditionally given that the
first part of $\cC_n$ is $n_1$, the remaining parts of
$\cC_n$ form a composition of $n-n_1$ with the same distribution
as $\cC_{n-n_1}$. Given any decrement matrix $(q^{\rm dec}(n,m), 1\leq m\leq
n)$, there is an associated sequence $\cC_n$ of regenerative random
compositions of $n$ defined by specifying that $q^{\rm dec}(n,\cdot)$ is the
distribution of the first part of $\cC_n$. Thus for each composition
$(n_1,\ldots,n_k)$ of $n$,
$$\mathbb{P}(\cC_n=(n_1,\ldots,n_k))=q^{\rm dec}(n,n_1)q^{\rm dec}(n-n_1,n_2)\ldots q^{\rm dec}(n_{k-1}+n_k,n_{k-1})q^{\rm dec}(n_k,n_k).$$

\begin{lemm}[Proposition 6 (i) in \cite{PW2}]\label{OCRL}
For each $(\alpha,\theta)$ with $0<\alpha<1$ and $\theta\geq 0$,
denote by $\cC_n$ the composition of block sizes in the ordered 
Chinese restaurant partition with parameters
$(\alpha,\theta)$. Then $(\cC_n, n\geq 1)$ is regenerative, with
decrement matix
\begin{equation}
  q_{\alpha,\theta}^{\rm dec}(n,m)={n \choose
  m}\frac{(n-m)\alpha+m\theta}{n}\frac{\Gamma(m-\alpha)\Gamma(n-m+\theta)}{\Gamma(1-\alpha)\Gamma(n+\theta)}\ \ \
  (1\leq m\leq n).
\end{equation}
\end{lemm}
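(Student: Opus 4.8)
The plan is to derive both assertions—regenerativity and the explicit form of $q^{\rm dec}_{\alpha,\theta}(n,\cdot)$—directly from the seating dynamics, by following the \emph{leftmost} block of the composition. The one structural fact that drives everything is this: when customer $c$ arrives (so that $c-1$ customers are already seated), the probability that it founds a \emph{new leftmost} table equals $\alpha/(c-1+\theta)$, and the probability that it founds the \emph{rightmost} table equals $\theta/(c-1+\theta)$, both \emph{independently of the current configuration}. Indeed, with $k-1$ tables present the chance of opening some new table is $(\theta+(k-1)\alpha)/(c-1+\theta)$, and conditionally on doing so the leftmost position carries weight $\alpha$ and the rightmost weight $\theta$ out of a total $\theta+(k-1)\alpha$, so the dependence on $k$ cancels. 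Equivalently, read from left to right the OCRP rule is scale-free in the number of tables: the rightmost position has weight $\theta$ and every other position has weight $\alpha$.

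Regenerativity then follows by decoupling the leftmost block from the rest. Conditionally on the leftmost block $A_1$ with $N_1=m$, I would show that $A_2,\ldots,A_k$, relabelled as an ordered partition of the remaining $n-m$ customers, are distributed as $\cC_{n-m}$. Any customer that does not join $A_1$ and is not placed to the left of $A_1$ sees, \emph{within the right region}, exactly the OCRP rule of the first paragraph: the global rightmost position (its own rightmost) has weight $\theta$, while its leftmost position—the gap just right of $A_1$—and all internal gaps have weight $\alpha$, and joining a right block of current size $j$ has weight $j-\alpha$. Hence the right region evolves as an OCRP on its own customers. The per-step denominators differ from those of a stand-alone OCRP on $n-m$ customers, but they affect only the \emph{timing} of the arrivals and not the resulting composition; and since the left-openings that create and grow $A_1$ carry the configuration-independent weights of the first paragraph, they are independent of the internal order of the right region. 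This gives the regenerative property, with $q^{\rm dec}_{\alpha,\theta}(n,\cdot)$ equal to the law of the leftmost block size $N_1$.

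It remains to compute $\mathbb{P}(N_1=m)$. I would decompose over the last time $\tau$ at which a surviving leftmost table is founded, with $\tau=1$ meaning that the leftmost block is the age-oldest block (the one containing customer $1$). By the structural fact, $\mathbb{P}(\tau=t)$ is an explicit product of factors $1-\alpha/(s-1+\theta)$ for $s>t$ together with one factor $\alpha/(t-1+\theta)$ (absent for $t=1$), which telescopes to a ratio of Gamma functions. Conditionally on $\tau=t$ and on no later leftmost opening, the block grows as a tractable urn: customer $s>t$ is absorbed with probability $(j-\alpha)/(s-1+\theta-\alpha)$, where $j$ is the current size. The absorptions contribute $\Gamma(m-\alpha)/\Gamma(1-\alpha)$ and the denominators $\Gamma(n+\theta-\alpha)/\Gamma(t+\theta-\alpha)$; crucially this last ratio cancels against the matching factor in $\mathbb{P}(\tau=t)$, leaving $\mathbb{P}(\tau=t,\,N_1=m)$ equal to $\frac{\alpha}{t-1+\theta}\frac{\Gamma(t+\theta)}{\Gamma(n+\theta)}\frac{\Gamma(m-\alpha)}{\Gamma(1-\alpha)}S_t$ for $t\ge2$ (and the analogue without the first factor for $t=1$), where $S_t$ sums, over the $\binom{n-t}{m-1}$ placements of the $m-1$ absorptions, a product over the non-absorbed customers of ``stay'' weights $s-1+\theta-j$ with $j$ the current block size.

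The main obstacle is to evaluate these sums and then sum over $t$, showing that $\sum_t\mathbb{P}(\tau=t,N_1=m)$ collapses to $\binom{n}{m}\frac{(n-m)\alpha+m\theta}{n}\frac{\Gamma(m-\alpha)\Gamma(n-m+\theta)}{\Gamma(1-\alpha)\Gamma(n+\theta)}$. I expect the two summands to separate by founding regime: the $\tau=1$ contribution should produce the $m\theta/n$ part and the $\tau\ge2$ contributions the $(n-m)\alpha/n$ part. This is consistent with the extreme case $m=n$, where only $\tau=1$ occurs and a direct computation gives $q^{\rm dec}_{\alpha,\theta}(n,n)=\theta\,\Gamma(n-\alpha)\Gamma(\theta)/(\Gamma(1-\alpha)\Gamma(n+\theta))$, and with $m=1$, where $S_1=\Gamma(n-1+\theta)/\Gamma(\theta)$ reproduces the claimed value. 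As an independent check of the final formula I would verify the deletion-consistency identity forced by the composition structure, $q^{\rm dec}(n-1,m)\big(1-\tfrac1n q^{\rm dec}(n,1)\big)=\tfrac{m+1}{n}q^{\rm dec}(n,m+1)+\tfrac{n-m}{n}q^{\rm dec}(n,m)$, which follows by deleting a uniformly chosen customer from $\cC_n$ and using regenerativity in the case $N_1=1$.
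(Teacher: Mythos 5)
The paper offers no proof of this lemma: it is imported verbatim as Proposition 6(i) of the cited Pitman--Winkel paper (where it comes out of the Gnedin--Pitman machinery of regenerative composition structures), so any self-contained derivation is necessarily a different route. Yours is sound in conception. The structural fact you isolate is correct: with $k$ tables present, opening a new leftmost (resp.\ rightmost) table at step $c$ has probability $\frac{\theta+k\alpha}{c-1+\theta}\cdot\frac{\alpha}{k\alpha+\theta}=\frac{\alpha}{c-1+\theta}$ (resp.\ $\frac{\theta}{c-1+\theta}$), independently of the configuration, so the leftmost-opening indicators are independent across $c$ and your formula for $\mathbb{P}(\tau=t)$ telescopes as claimed. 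Your restriction argument for regenerativity is also essentially right: conditionally on which customers avoid the final leftmost block, those customers see exactly the OCRP weights of an $(\alpha,\theta)$ restaurant on themselves (one weight-$\alpha$ position immediately to the right of $A_1$ playing the role of their leftmost), and the time change is immaterial to the resulting composition.

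The one step you leave open does close, and more cleanly than you anticipate, so I record it. Conditionally on $\tau=t$ and on no later leftmost opening, the absorption process into $A_1$ is a two-colour P\'olya-type urn: customer $s$ is absorbed with weight $j-\alpha$ and rejected with weight $(s-1-j)+\theta$, where $j$ and $s-1-j$ are the current sizes of $A_1$ and of its complement. The product of absorption weights is $\prod_{j=1}^{m-1}(j-\alpha)$ and the product of rejection weights is $\prod_{i=t-1}^{n-m-1}(i+\theta)=\Gamma(n-m+\theta)/\Gamma(t-1+\theta)$, \emph{whatever} the set $T$ of absorbed customers; hence all $\binom{n-t}{m-1}$ choices of $T$ are equally likely and your sum is simply $S_t=\binom{n-t}{m-1}\Gamma(n-m+\theta)/\Gamma(t-1+\theta)$. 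Substituting gives $\mathbb{P}(\tau=t,\,N_1=m)=\alpha\binom{n-t}{m-1}\frac{\Gamma(m-\alpha)\Gamma(n-m+\theta)}{\Gamma(1-\alpha)\Gamma(n+\theta)}$ for $t\ge2$ and $\theta\binom{n-1}{m-1}\frac{\Gamma(m-\alpha)\Gamma(n-m+\theta)}{\Gamma(1-\alpha)\Gamma(n+\theta)}$ for $t=1$; the identity $\sum_{t=2}^{n}\binom{n-t}{m-1}=\binom{n-1}{m}$ together with $\binom{n-1}{m}=\frac{n-m}{n}\binom{n}{m}$ and $\binom{n-1}{m-1}=\frac{m}{n}\binom{n}{m}$ then yields exactly the stated decrement matrix and confirms your conjectured split of the two summands by founding regime. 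With that computation supplied, your argument is a complete and rather more elementary proof than the one the paper points to.
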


\subsection{The splitting rule of alpha-gamma trees and the proof of Proposition \ref{prop1}}
%Given a sequence of unlabelled fragmentation trees
%$(T_n^\circ,n\geq1)$, it is called \textit{Markov branching at the
%first branch point} if any subtree of $T_n^\circ$ with $k$ leaves is
%distributed as the $T_k^\circ$, and is independent with other
%subtrees. Denote by $q_n(n_1,\ldots ,n_k)$ for all $n\in\mathbb{N}$ the
%probability that $T_n^\circ$ is split into $k$ subtrees with
%$n_1,\ldots ,n_k$ leaves each, and we can it \textit{splitting rules} of
%the Markov branching trees $(T_n^\circ,n\geq1)$. Moreover, the
%probability of the whole tree can be expressed as the product the
%the splitting rules at each branch point. The alpha-gamma model is
%actually Markov branching and has splitting rules specified in
%$(\ref{split})$.

Proposition \ref{prop1} claims that the unlabelled alpha-gamma trees $(T_n^\circ,n\ge 1)$ have the Markov 
branching property, identifies the splitting rule and studies the exchangeability of labels. In 
preparation of the proof of the Markov branching property, we use CRPs to compute the probability function of the first split of $T_n^\circ$ in Proposition \ref{prop10}. We will then establish 
the Markov branching property from a spinal decomposition result (Lemma \ref{spinaldec}) for
$T_n^\circ$.

\begin{prop}\label{prop10} Let $T_n^\circ$ be an unlabelled alpha-gamma tree for some $0\le\alpha<1$ and $0\le\gamma\le\alpha$, then the probability function of the first split of $T_n^\circ$ is
  \beqs q^{\rm seq}_{\alpha,\gamma}(n_1,\ldots,n_k)=\frac{Z_n\Gamma(1-\alpha)}{\Gamma(n-\alpha)}\left(\gamma+(1-\alpha-\gamma)\frac{1}{n(n-1)}\sum_{i\neq j} n_i
n_j\right
)q_{\alpha,-\alpha-\gamma}^{\rm
PD^*}(n_1,\ldots ,n_k),
  \eeqs
  $n_1\ge\ldots\ge n_k\ge 1$, $k\ge 2$: $n_1+\ldots+n_k=n$, where $Z_n$ is the normalisation constant in (\ref{EPmod}). 
\end{prop}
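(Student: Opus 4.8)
The plan is to reduce the statement to the analysis of a single Markov chain, namely the composition $C_n=(n_1,\ldots,n_k)$ recording the leaf-numbers of the subtrees of the common ancestor of $T_n$, and to identify its one-step dynamics with a Chinese restaurant. First I would do the weight bookkeeping behind rules (i)--(iii). The total weight assigned at step $n$ is $n-\alpha$, and the same accounting applied to any subtree with $m$ leaves \emph{together with its connecting edge} gives total weight $m-\alpha$. Hence the weight making leaf $n+1$ fall into the $i$th subtree of the common ancestor (any edge or vertex strictly inside it, plus the edge joining it to the common ancestor) is $n_i-\alpha$; the weight that opens a fresh child at the common ancestor (selection of that vertex, of degree $k+1$) is $(k-1)\alpha-\gamma$; and the weight $\gamma$ of the root edge triggers a \emph{reset}, inserting a new common ancestor whose two children are the whole old tree (one block of size $n$) and the new leaf. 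Thus $C_n$ grows block $i$ with probability $(n_i-\alpha)/(n-\alpha)$, opens a new size-$1$ block with probability $((k-1)\alpha-\gamma)/(n-\alpha)$, and resets to $(n,1)$ with probability $\gamma/(n-\alpha)$; these sum to $1$ by the computation above.

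The crucial observation is that, \emph{conditionally on no reset}, the numerators $n_i-\alpha$ and $(k-1)\alpha-\gamma$ are exactly the seating weights of a Chinese restaurant with $(\alpha,\theta)$-seating plan for $\theta=-\alpha-\gamma$, renormalised from $n-\alpha$ to $n-\alpha-\gamma$ (Lemma \ref{crp1}). Moreover the reset probability $\gamma/(n-\alpha)$ depends only on $n$, so the reset epochs form an inhomogeneous Bernoulli sequence that is \emph{independent} of the block configuration. Therefore $C_n$ is an $(\alpha,-\alpha-\gamma)$-restaurant on block sizes, interrupted by independent resets that merge all current blocks into one and append a singleton. Here $\theta=-\alpha-\gamma\in[-2\alpha,-\alpha]$ is the extended range, so the relevant partition law is the conditioned Ewens--Pitman family $p_{\alpha,-\alpha-\gamma}^{\rm PD^*}$ of (\ref{EPmod}); since the common ancestor always has at least two children and $(k-1)\alpha-\gamma\ge\alpha-\gamma\ge0$ for $k\ge2$, all transitions stay in the $\ge 2$-block regime where these probabilities are valid.

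From here I see two routes, and I would carry out the first. \textbf{Route (a), induction on $n$.} Assume $\bP(C_n=c)=\frac{Z_n\Gamma(1-\alpha)}{\Gamma(n-\alpha)}\,\phi_n(c)\,q^{\rm PD^*}_{\alpha,-\alpha-\gamma}(c)$ with $\phi_n(c)=\gamma+(1-\alpha-\gamma)\frac{1}{n(n-1)}\sum_{i\neq j}n_in_j$, and verify the forward equation for $C_{n+1}$. The block-move part is governed by the forward-invariance of $q^{\rm PD^*}_{\alpha,-\alpha-\gamma}$ under restaurant seating (the sampling consistency of the extended Ewens--Pitman family), while the reset contributes the flat mass $\gamma/(n-\alpha)$ into the single state $(n,1)^{\downarrow}$; the base case $n=2$ gives $q^{\rm seq}(1,1)=1$, and a check at $n=3$ fixes that the ratio $q^{\rm seq}(1,1,1)/q^{\rm seq}(2,1)=(\alpha-\gamma)/(2-2\alpha+\gamma)$ matches the claimed product. \textbf{Route (b), last-reset decomposition.} Condition on the time $M$ of the last reset; by the independence above and the consistency of $p^{\rm PD^*}_{\alpha,-\alpha-\gamma}$, the conditional law of $C_n$ is that of an $(\alpha,-\alpha-\gamma)$-partition of $[n]$ with prescribed restriction $\{[M],\{M+1\}\}$, and summing over $M$ (a regenerative, ordered-restaurant structure of the type in Lemma \ref{OCRL}) reassembles the tilting factor.

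The main obstacle is closing the recursion in Route (a): the factor $\phi_n$ \emph{couples} the blocks through $\sum_{i\neq j}n_in_j$, so a grow-move changes it in a block-dependent way, and one must show that the block-move contributions together with the single reset injection at $(n,1)$ reproduce exactly $\phi_{n+1}\,q^{\rm PD^*}_{\alpha,-\alpha-\gamma}$ at level $n+1$, the constant $Z_n\Gamma(1-\alpha)/\Gamma(n-\alpha)$ absorbing the change of normalisation. The bookkeeping is clarified by the probabilistic reading of the factor, $\phi_n(c)=\gamma+(1-\alpha-\gamma)\,\bP_c(\text{two random leaves lie in different blocks})$, the $\gamma$-term coming from resets and the pair term reflecting the chance that a tagged pair of leaves is never merged by a reset; this is the discrete shadow of the discounting of Poisson--Dirichlet by $\sum_{i\neq j}s_is_j$ noted after Theorem \ref{thm2}.
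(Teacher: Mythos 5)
Your reduction of the first split to a Markov chain on compositions is correct and is a genuinely different framework from the paper's: the weight bookkeeping (block $i$ grows with probability $(n_i-\alpha)/(n-\alpha)$, a new singleton block opens with probability $((k-1)\alpha-\gamma)/(n-\alpha)$, and selection of the root edge resets the composition to $(n,1)$ with probability $\gamma/(n-\alpha)$) is exactly right, as is the identification of the no-reset dynamics with the $(\alpha,-\alpha-\gamma)$ seating plan and the observation that the reset epochs are independent of the block evolution. The difficulty is that neither of your two routes is actually carried out, and the step you leave open is precisely the entire content of the proposition. In Route (a) you write down the forward equation and then explicitly flag ``closing the recursion'' as the main obstacle; in Route (b) you assert in one clause that summing over the last reset time ``reassembles the tilting factor''. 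But producing the factor $\gamma+(1-\alpha-\gamma)\frac{1}{n(n-1)}\sum_{i\neq j}n_in_j$, together with the normalisation $Z_n\Gamma(1-\alpha)/\Gamma(n-\alpha)$, is exactly what has to be computed; the probabilistic reading of the factor you give at the end is a plausibility check, not a derivation. As it stands the proposal is a correct setup plus an unproven identity. If you pursue Route (b), you must also write out $\bP(\mbox{last reset at }M)=\frac{\gamma}{M-\alpha}\prod_{m=M+1}^{n-1}\frac{m-\alpha-\gamma}{m-\alpha}$, compute the law of the composition reached from $(M,1)$ under the $(\alpha,-\alpha-\gamma)$ seating (taking care that for $\theta=-\alpha-\gamma\in[-2\alpha,-\alpha]$ only the $k\ge2$-block family $p^{\rm PD^*}$ makes sense, so all consistency statements must be phrased for that conditioned family), and then actually perform the sum over $M$.

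For comparison, the paper avoids both the coupled recursion and the temporal decomposition altogether by decomposing $T_n$ spatially along the spine from the {\sc root} to leaf $1$: the spinal bushes form an ordered Chinese restaurant with parameters $(\gamma,1-\alpha)$, so Lemma \ref{OCRL} gives in closed form the probability that the complement of leaf $1$'s subtree in the first split has $n-n_i$ leaves, and conditionally on this the subtrees within the first bush form an unordered $(\alpha,-\gamma)$ restaurant with explicit EPPF; multiplying these two closed-form expressions and summing over the possible locations of leaf $1$ yields the stated formula directly, with no induction. Your approach, if completed, would buy a dynamic description of the first-split composition (useful in its own right), but at the cost of a sum over reset times that the paper's spinal decomposition packages into a single decrement-matrix evaluation.
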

\begin{proof}
We start from the growth rules of the labelled alpha-gamma trees $T_n$.
Consider the \em spine \em $\mbox{\sc root}\rightarrow v_1\rightarrow\ldots\rightarrow v_{L_{n-1}}\rightarrow 1$ of $T_n$,
and the \em spinal subtrees \em $S_{ij}^{\rm sp}$, $1\le i\le L_{n-1}$, $1\le j\le K_{n,i}$, not containing 1 of the spinal vertices $v_i$, 
$i\in[L_{n-1}]$. By joining together the subtrees of the spinal vertex $v_i$ we form the $i$th \em spinal bush \em 
$S_i^{\rm sp}=S_{i1}^{\rm sp}*\ldots*S_{iK_{n,i}}^{\rm sp}$. Suppose a bush $S_i^{\rm sp}$ consists of $k$
subtrees with $m$ leaves in total, then its weight will be
$m-k\alpha-\gamma+k\alpha=m-\gamma$ according to growth
rule (i) -- recall that the total weight of the tree $T_n$ is $n-\alpha$.

Now we consider each bush as a table, each leaf $n=2,3,\ldots$ as a customer, 2 being the first customer. Adding 
a new leaf to a bush or to an edge on the spine corresponds
to adding a new customer to an existing or to a new table. The weights
are such that we construct an ordered Chinese restaurant partition of $\bN\setminus\{1\}$ with 
parameters $(\gamma, 1-\alpha)$. 

Suppose that the first split of $T_n$ is into tree components with
numbers of leaves $n_1\ge\ldots\ge n_k\ge 1$. Now suppose further that leaf 1 
is in the subtree with $n_i$ leaves in the first split, then the first 
spinal bush $S_1^{\rm sp}$ will have $n-n_i$ leaves. Notice that this event is 
equivalent to that of $n-n_i$ customers sitting at the first table 
with a total of $n-1$ customers present, in the terminology of the ordered 
CRP. According to Lemma \ref{OCRL}, the probability of this is
\begin{eqnarray}
q^{\rm dec}_{\gamma, 1-\alpha}(n-1,n-n_i)&=&{n-1 \choose
n-n_i}\frac{(n_i-1)\gamma+(n-n_i)(1-\alpha)}{n-1}\frac{\Gamma(n_i-\alpha)\Gamma(n-n_i-\gamma)}{\Gamma(n-\alpha)\Gamma(1-\gamma)}\nonumber\\
&=&{n\choose n-n_i}\left
(\frac{n_i}{n}\gamma+\frac{n_i(n-n_i)}{n(n-1)}(1-\alpha-\gamma)\right)\frac{\Gamma(n_i-\alpha)\Gamma(n-n_i-\gamma)}{\Gamma(n-\alpha)\Gamma(1-\gamma)}.\nonumber
\end{eqnarray}
%where $\Gamma_\gamma(m)=\Gamma(m-\gamma)/\Gamma(1-\gamma)$ for all
%$m\in\mathbb{N}$.
Next consider the probability that the first bush $S_1^{\rm sp}$
joins together subtrees with $n_1\ge \ldots\ge n_{i-1}\ge n_{i+1}\ge \ldots n_k\ge 1$ leaves
conditional on the event that leaf 1 is in a subtree with $n_i$ leaves. The first bush has a
weight of $n-n_i-\gamma$ and each subtree in it has a weight of
$n_j-\alpha, j\neq i$. Consider these $k-1$ subtrees as tables and
the leaves in the first bush as customers. According to the growth
procedure, they form a second (unordered, this time) Chinese restaurant partition with
parameters $(\alpha, -\gamma)$, whose EPPF is
\begin{equation}
p_{\alpha,-\gamma}^{\rm PD}(n_1,\ldots,n_{i-1},n_{i+1},\ldots,n_k)=\frac{\alpha^{k-2}\Gamma(k-1-\gamma/\alpha)\Gamma(1-\gamma)}{\Gamma(1-\gamma/\alpha)\Gamma(n-n_i-\gamma)}\prod_{j\in[k]\setminus\{i\}}\frac{\Gamma(n_j-\alpha)}{\Gamma(1-\alpha)}.\nonumber
\end{equation}
Let $m_j$ be the number of $j$s in the sequence of $(n_1,\ldots,n_k)$.
Based on the exchangeability of the second Chinese restaurant partition, the
probability that the first bush
 consists of subtrees with $n_1\ge\ldots\ge n_{i-1}\ge n_{i+1}\ge\ldots\ge n_k\ge 1$ leaves
 conditional on the event that leaf 1 is in one of the $m_{n_i}$ subtrees with $n_i$ leaves
will be
\begin{equation}
\frac{m_{n_i}}{m_1!\ldots m_n!}{n-n_i\choose
n_1,\ldots,n_{i-1},n_{i+1},\ldots,n_k}p_{\alpha,-\gamma}^{\rm PD}(n_1,\ldots,n_{i-1},n_{i+1},\ldots,n_k).\nonumber
\end{equation}
%Here $\hat{n}_i$ means that $n_i$ is deleted. 
Thus the joint
probability that the first split is $(n_1,\ldots,n_k)$ and that leaf 1
is in a subtree with $n_i$ leaves is,
\begin{eqnarray}\label{pni}
&&\hspace{-1cm}\frac{m_{n_i}}{m_1!\ldots m_n!}{n-n_i\choose
n_1,\ldots,n_{i-1},n_{i+1},\ldots,n_k}q^{\rm dec}_{\gamma,
1-\alpha}(n-1,n-n_i)p^{\rm PD}_{\alpha,-\gamma}(n_1,\ldots,n_{i-1},n_{i+1},\ldots,n_k)\nonumber\\
&=& m_{n_i}\left
(\frac{n_i}{n}\gamma+\frac{n_i(n-n_i)}{n(n-1)}(1-\alpha-\gamma)\right)
\frac{Z_n\Gamma(1-\alpha)}{\Gamma(n-\alpha)}q_{\alpha,-\alpha-\gamma}^{\rm
PD^*}(n_1,\ldots,n_k).
\end{eqnarray}
%where $Z_n$ is the normalization constant in (\ref{EPmod}).

 Hence the
splitting rule will be the sum of (\ref{pni}) for all \em different \em
$n_i$ (not $i$) in $(n_1,\ldots ,n_k)$, but they contain factors $m_{n_i}$, so we can write it as sum over $i\in[k]$:
\begin{eqnarray*}
q_{\alpha,\gamma}^{\rm seq}(n_1,\ldots ,n_k) %&=&\left(\sum_{{\rm
%distinct\ } n_i} m_{n_i}\left
%(\frac{n_i}{n}\gamma+\frac{n_i(n-n_i)}{n(n-1)}(1-\alpha-\gamma)\right)\right
%)\frac{Z_n}{\Gamma_\alpha(n)}q_{\alpha,-\alpha-\gamma}^{\rm PD^*}(n_1,\ldots ,n_k)\\
&=&\left(\sum_{i=1}^k \left
(\frac{n_i}{n}\gamma+\frac{n_i(n-n_i)}{n(n-1)}(1-\alpha-\gamma)\right)\right )\frac{Z_n\Gamma(1-\alpha)}{\Gamma(n-\alpha)}q_{\alpha,-\alpha-\gamma}^{\rm PD^*}(n_1,\ldots ,n_k)\\
&=&\left(\gamma+(1-\alpha-\gamma)\frac{1}{n(n-1)}\sum_{i\neq j} n_i
n_j\right
)\frac{Z_n\Gamma(1-\alpha)}{\Gamma(n-\alpha)}q_{\alpha,-\alpha-\gamma}^{\rm
PD^*}(n_1,\ldots ,n_k).
\end{eqnarray*}\vspace{-0.6cm}

\end{proof}

We can use the nested Chinese restaurants described in the proof to study the subtrees of the spine of $T_n$. We have decomposed
$T_n$ into the subtrees $S_{ij}^{\rm sp}$ of the spine from the {\sc root} to 1 and can, conversely, build $T_n$ from 
$S_{ij}^{\rm sp}$, for which we now introduce notation
\beqs T_n=\coprod_{i,j}S_{ij}^{\rm sp}.
\eeqs 
We will also write $\coprod_{i,j}S_{ij}^\circ$ when we join together unlabelled trees $S_{ij}^\circ$ along a spine. The following unlabelled version of a spinal decomposition theorem will entail the Markov branching property.

\begin{lemm}[Spinal decomposition]\label{spinaldec} Let $(T_n^{\circ1},n\ge 1)$ be alpha-gamma trees, delabelled apart from label
  1. For all $n\ge 2$, the tree $T_n^{\circ1}$ has the same distribution as $\coprod_{i,j}S_{ij}^\circ$, where 
  \begin{itemize}\item $\cC_{n-1}=(N_1,\ldots,N_{L_{n-1}})$ is a regenerative composition with decrement matrix 
      $q_{\gamma,1-\alpha}^{\rm dec}$,
    \item conditionally given $L_{n-1}=\ell$ and $N_i=n_i$, $i\in[\ell]$, the sizes $N_{i1}\ge\ldots\ge N_{iK_{n,i}}\ge 1$ form 
	  random compositions of $n_i$ with distribution $q_{\alpha,-\gamma}^{PD}$, independently for $i\in[\ell]$,  
    \item conditionally given also $K_{n,i}=k_i$ and $N_{ij}=n_{ij}$, the trees $S_{ij}^\circ$, $j\in[k_i]$, $i\in[\ell]$, are 
      independent and distributed as $T_{n_{ij}}^\circ$.
  \end{itemize}
%$\fs_{ij}^\circ\in\bT_{n_{ij}}^\circ$,
%  $1\le i\le\ell$, $1\le j\le k_i$, of trees with $\sum_jn_{ij}=n_i$ and $\sum_in_i=n-1$, 
%  \beqs \bP\left(T_n^\circ=\coprod_{i,j}\fs_{ij}^\circ\right)=\bP(\cC_{n-1}^{\gamma,1-\alpha}=(\sum_jn_{1j},\ldots,\sum_jn_{\ell j}))\prod_{i=1}^\ell q_{\alpha,-\gamma}^{\rm PD}(n_{i1},\ldots,n_{ik_i})\bP(T_{n_{ij}}^\circ=\fs_{ij}^\circ).
%  \eeqs
\end{lemm}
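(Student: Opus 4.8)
The plan is to track the growth sequence $(T_n,n\ge 1)$ with only the label $1$ retained and to read the three-level hierarchy directly off the weights of rule (i), exactly as in the proof of Proposition \ref{prop10}. At every step $n\to n+1$ the single weighted choice of where to attach leaf $n+1$ factorises hierarchically, because the total weight $n-\alpha$ splits as (weight of the spinal edges) plus (weight of the bushes), each bush of $m$ leaves carrying weight $m-\gamma$; and in turn each bush weight splits as (weight of its spinal vertex $v_i$, namely $k_i\alpha-\gamma$ for $k_i$ subtrees) plus (the weights $n_{ij}-\alpha$ of its subtrees). I would therefore decompose each attachment into three successive decisions: whether leaf $n+1$ lands on the spine (creating a new bush) or in an existing bush, and which bush; then, inside that bush, whether it starts a new subtree at $v_i$ or grows an existing one, and which; and finally where inside the chosen subtree it goes. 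The whole argument reduces to checking that these three decisions run as three conditionally independent Markov chains with the claimed laws.

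For the first level, the bush sizes in spinal order evolve as the ordered Chinese restaurant: an existing bush of $m$ leaves is grown with probability $(m-\gamma)/(n-\alpha)$, while a new bush is created either between or to the left of existing ones (splitting an inner spinal edge of weight $\gamma$) or to the right, nearest leaf $1$ (splitting the leaf edge $v_{L_{n-1}}\to 1$ of weight $1-\alpha$). Matching the new-table fractions of the ordered seating plan with $(1-\alpha)/(k\gamma+1-\alpha)$ for the rightmost position and $\gamma/(k\gamma+1-\alpha)$ for each of the remaining positions identifies its parameters as $(\gamma,1-\alpha)$, so Lemma \ref{OCRL} shows that $\cC_{n-1}$ is regenerative with decrement matrix $q_{\gamma,1-\alpha}^{\rm dec}$. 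For the second level, within a fixed bush the subtrees are tables and its leaves customers of an \emph{unordered} restaurant: a subtree of $n_{ij}$ leaves grows with probability $(n_{ij}-\alpha)/(m_i-\gamma)$ and a new subtree is created by selecting $v_i$ with probability $(k_i\alpha-\gamma)/(m_i-\gamma)$, which is the $(\alpha,-\gamma)$ seating plan; Lemma \ref{crp1} then gives EPPF $p_{\alpha,-\gamma}^{\rm PD}$ and hence ranked block sizes distributed as $q_{\alpha,-\gamma}^{\rm PD}$. For the third level, the weights seen inside a subtree are exactly the alpha-gamma weights, so each subtree, started from a single leaf, grows as an alpha-gamma tree and, after delabelling, has shape $T_{n_{ij}}^\circ$.

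The main obstacle is to justify that these three chains are genuinely conditionally independent given the sizes they produce, not merely that they have the right marginals. This I would derive from the multiplicative factorisation of the weights above: conditionally on which bush (resp.\ subtree) is hit and on the current sizes, the finer choice uses only the internal weights of that bush (resp.\ subtree) and is independent of the coarser data and of the other bushes (resp.\ subtrees). Iterating over the growth steps then yields that, given $L_{n-1}=\ell$ and $N_i=n_i$, the bushes evolve as independent unordered restaurants, and that, given all subtree sizes, the subtrees evolve as independent alpha-gamma trees; passing to $\coprod_{i,j}S_{ij}^\circ$ is finally just the delabelling of leaves $2,\ldots,n$, which respects this independence. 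Two points need care: the asymmetry of the ordered seating plan must be matched correctly to the \emph{single} leaf edge $v_{L_{n-1}}\to 1$ of weight $1-\alpha$ versus the $\ell$ inner spinal edges of weight $\gamma$, and the boundary parameters should be noted separately --- for $\gamma=\alpha$ the second-level restaurant degenerates (a vertex of degree $3$ has weight $\alpha-\gamma=0$, forbidding new subtrees and recovering the binary alpha model), and the remaining boundary cases follow by reading the same formulas by continuity.
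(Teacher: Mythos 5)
Your proposal is correct and follows essentially the same route as the paper: the paper's proof is an induction on $n$ that decomposes each insertion of leaf $n+1$ into exactly the same three nested choices --- an ordered CRP with $(\gamma,1-\alpha)$-seating plan over bushes, an unordered CRP with $(\alpha,-\gamma)$-seating plan over subtrees within the selected bush, and alpha-gamma growth inside the selected subtree --- with the conditional independence coming from the fact that these selections depend on $T_n$ only through $T_n^{\circ 1}$ and that the growth rules induce consistent growth rules on the partially labelled and unlabelled trees. Your weight bookkeeping ($m-\gamma$ per bush, $k_i\alpha-\gamma$ at $v_i$, $n_{ij}-\alpha$ per subtree) and your flagged points of care (the orientation of the ordered seating plan toward the leaf edge $v_{L_{n-1}}\to 1$, and the degeneration at $\gamma=\alpha$) match the paper's argument.
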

\begin{proof} For an induction on $n$, note that the claim is true for $n=2$, since $T_n^{\circ1}$ and $\coprod_{i,j}S_{ij}^\circ$
  are deterministic for $n=2$. Suppose then that the claim is true for some $n\ge 2$ and consider $T_{n+1}^\circ$. 

  The growth rules (i)-(iii) of the labelled alpha-gamma tree $T_n$ are such that 
  \begin{itemize}\item leaf $n+1$ is inserted into a new bush or any of the bushes $S_i^{\rm sp}$ selected according to the rules
      of the ordered CRP with $(\gamma,1-\alpha)$-seating plan, 
    \item further into a new subtree or any of the subtrees $S_{ij}^{\rm sp}$ of the selected bush $S_i^{\rm sp}$ according to the
      rules of a CRP with $(\alpha,-\gamma)$-seating plan,
    \item and further within the subtree $S_{ij}^{\rm sp}$ according to the weights assigned by (i) and growth rules (ii)-(iii).
  \end{itemize}
  These selections do not depend on $T_n$ except via $T_n^{\circ1}$. In fact, since labels do not feature in the growth rules
  (i)-(iii), they are easily seen to induce growth rules for partially labelled alpha-gamma trees $T_n^{\circ1}$, and also for
  unlabelled alpha-gamma trees such as $S_{ij}^\circ$. 
  
  From these observations and the induction hypothesis, we deduce the claim for $T_{n+1}^\circ$. 
\end{proof}

\begin{proof}[Proof of Proposition \ref{prop1}] 
  ${\rm(a)}$  Firstly, the distributions of the first splits of the unlabelled alpha-gamma trees $T_n^\circ$ were calculated
  in Proposition \ref{prop10}, for $0\le\alpha<1$ and $0\le\gamma\le\alpha$.  

  Secondly, let $0\le\alpha\le 1$ and $0\le\gamma\le\alpha$. By the regenerative property of the spinal composition $\cC_{n-1}$ and the conditional
  distribution of $T_n^{\circ1}$ given $\cC_{n-1}$ identified in Lemma \ref{spinaldec}, we obtain that given $N_1=m$, 
  $K_{n,1}=k_1$ and $N_{1j}=n_{1j}$, $j\in[k_1]$, the subtrees $S_{1j}^\circ$, $j\in[k_1]$, are independent alpha-gamma trees
  distributed as $T_{n_{1j}}^\circ$, also independent of the remaining tree $S_{1,0}:=\coprod_{i\ge 2,j}S_{ij}^\circ$, which, by
  Lemma \ref{spinaldec}, has the same distribution as $T_{n-m}^\circ$. 
  
  This is equivalent to saying that conditionally given that the first split is into subtrees with $n_1\ge \ldots\ge n_{i}\ge \ldots\ge n_k\ge 1$ leaves and that leaf 1 is
  in a subtree with $n_i$ leaves, the delabelled subtrees $S_1^\circ,\ldots,S_k^\circ$ of the common ancestor are independent and
  distributed as $T_{n_j}^{\circ}$ respectively, $j\in[k]$. Since this conditional distribution does not depend on $i$, we have
  established the Markov branching property of $T_n^\circ$. 

%  The Markov branching property in the case $\alpha=1$ is established in Section \ref{sectalpha1}.

  (b) Notice that if $\gamma=1-\alpha$, the alpha-gamma model is the model related to stable trees, the labelling of
  which is known to be exchangeable, see Section \ref{sectstable}.

  On the other hand, if $\gamma\neq 1-\alpha$, let us turn to look at
  the distribution of $T_3$.

\setlength{\unitlength}{0.5cm}
\begin{picture}(30,6)
\multiput(10,1)(10,0){2}{\line(0,1){2.8}} \multiput(10,
2.4)(10,0){2}{\line(1,1){1}} \multiput(10,
3.8)(10,0){2}{\line(1,1){1}} \multiput(10,
3.8)(10,0){2}{\line(-1,1){1}} \put(8.8,5){1} \put(10.8,5){2}
\put(10.8,3.6){3} \put(18.8,5){1} \put(20.8,5){3} \put(20.8,3.6){2}
\put(7.5,0){Probability:\ $\frac{\gamma}{2-\alpha}$}
\put(17.5,0){Probability:\ $\frac{1-\alpha}{2-\alpha}$}
\end{picture}

We can see the probabilities of the two labelled tree in the above
picture are different although they have the same unlabelled tree.
So if $\gamma\neq 1-\alpha$, $T_n$ is not exchangeable.
\end{proof}

\subsection{Sampling consistency and strong sampling consistency}\label{sectsc}
Recall that an unlabelled Markov branching tree $T_n^\circ$, $n\geq2$
has the property of \textit{sampling consistency}, if when we select a
leaf uniformly and delete it (together with the adjacent branch point if its degree is reduced to 2), 
then the new tree, denoted by $T_{n,-1}^\circ$, is distributed as
$T_{n-1}^\circ$. 
Denote by $d:\bD_n\rightarrow\bD_{n-1}$ the induced deletion operator on the space
$\bD_n$ of probability measures on $\bT_n^\circ$, so that for the distribution $P_n$ of $T_n^\circ$, we
define $d(P_{n})$ as the distribution of $T_{n,-1}^\circ$. Sampling
consistency is equivalent to $d(P_n)=P_{n-1}$. This property is also
called \textit{deletion stability} in \cite{For-05}. 
%It is shown that a Markov branching tree is sampling consistent if its sampling
%splitting rule has the form as (2) in \cite{HMPW}. To make it
%clearer, we use Proposition \ref{prop1} to prove the sampling
%consistency of alpha-gamma trees directly instead of using Theorem 1
%in \cite{HMPW}.

\begin{prop}
\label{samplecons} The unlabelled alpha-gamma trees for $0\le\alpha\le 1$ and $0\le\gamma\le\alpha$ are sampling
consistent.
\end{prop}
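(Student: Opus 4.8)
The plan is to establish sampling consistency by induction on $n$, reducing the statement to a linear recursion between the splitting rules $q^{\rm seq}_{\alpha,\gamma}$ at levels $n$ and $n-1$, and then to verify that recursion from the explicit formula of Proposition~\ref{prop10}. First I would dispose of two easy regimes. The degenerate case $\alpha=1$ is handled separately (Section~\ref{sectalpha1}), and in the stable case $\gamma=1-\alpha$ the weight factor in \re{split} is the constant $\gamma$, so $q^{\rm seq}_{\alpha,\gamma}\propto q^{\rm PD^*}_{\alpha,2\alpha-1}$ is a pure Ewens--Pitman splitting rule; its sampling consistency is inherited directly from the consistency of the Chinese restaurant partitions of Lemma~\ref{crp1}. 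More generally, since $0\le\gamma\le\alpha$ forces $\theta=-\alpha-\gamma\in[-2\alpha,-\alpha]$, the measure ${\rm PD}^*_{\alpha,-\alpha-\gamma}$ is a genuine dislocation measure and $q^{\rm PD^*}_{\alpha,-\alpha-\gamma}$ is sampling consistent by the integral-representation theory recalled above. It therefore remains to treat $0\le\alpha<1$ with $1-\alpha-\gamma\neq0$, where the weight in \re{split} genuinely depends on the partition.

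Second, assuming sampling consistency for all sizes below $n$, I condition via the Markov branching property (Proposition~\ref{prop1}) on the first split $(N_1,\ldots,N_K)$ of $T_n^\circ$ and on which subtree contains the deleted leaf, subtree $i$ being hit with probability $N_i/n$. If $N_i\ge2$ the deleted leaf is uniform within that subtree, so by the induction hypothesis it becomes a correctly distributed $T_{N_i-1}^\circ$ while the root and the remaining independent subtrees are untouched; if $N_i=1$ and $K\ge3$ the singleton subtree is simply erased; and if $N_i=1$ and $K=2$ the first split dissolves, leaving the surviving subtree, distributed as $T_{n-1}^\circ$, as the whole tree. In every case the resulting subtrees are independent alpha-gamma trees of the sizes prescribed by the new first split, so it suffices to match the first-split distribution of $T_{n,-1}^\circ$ with $q^{\rm seq}_{\alpha,\gamma}$ at level $n-1$. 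Writing $r_n(\lambda)$ for the probability that $T_{n,-1}^\circ$ has first split $\lambda$, this amounts to verifying
\[ q^{\rm seq}_{n-1}(\lambda)=\sum_{\mu}\kappa(\mu,\lambda)\,q^{\rm seq}_n(\mu)+\tfrac1n\,q^{\rm seq}_n(n-1,1)\,q^{\rm seq}_{n-1}(\lambda), \]
where $\kappa(\mu,\lambda)$ is the probability that a uniform non-dissolving deletion carries the first split $\mu$ of $n$ to $\lambda$ (so $\mu$ is $\lambda$ with one part raised by one, or $\lambda$ with a singleton adjoined), and the last, self-referential term records the dissolving case.

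Verifying this identity in closed form is the step I expect to be the main obstacle. Substituting $q^{\rm seq}_n\propto\big(\gamma+(1-\alpha-\gamma)\tfrac{1}{n(n-1)}\sum_{i\ne j}n_in_j\big)q^{\rm PD^*}_{\alpha,-\alpha-\gamma}$ and using linearity in the weight, the contribution of the constant part $\gamma$ is governed entirely by the already-established sampling consistency of $q^{\rm PD^*}_{\alpha,-\alpha-\gamma}$, so the whole problem concentrates on the quadratic weight $\sum_{i\ne j}n_in_j$. This equals $n(n-1)$ times the probability that two distinct uniformly chosen leaves lie in different first-split subtrees, a statistic that is not multiplicative across subtrees; the real work is to track exactly how it transforms under deletion of one leaf. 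I would compute the relevant tilted sums from the moment structure of the $(\alpha,-\alpha-\gamma)$ Ewens--Pitman partition --- evaluating the expectation of $\sum_{i\ne j}N_iN_j$ and of the one-part-reduced analogues under $q^{\rm PD^*}$ through the GEM representation of Lemma~\ref{crp1} --- and then check that, once the weights $N_i/n$, the combinatorial multiplicities and the dissolving term are assembled, the quadratic contributions recombine to precisely $(1-\alpha-\gamma)\tfrac{1}{(n-1)(n-2)}\sum_{i\ne j}\lambda_i\lambda_j$ at level $n-1$. Matching the normalising constants $Z_n$ of \re{EPmod} then closes the induction, and the boundary values $\gamma=0$ and $\alpha=1$ follow by continuity and the separate discussion referenced above.
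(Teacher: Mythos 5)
Your overall strategy coincides with the paper's: both reduce sampling consistency to the deletion recursion relating the splitting rule at level $n-1$ to that at level $n$ (your displayed identity is exactly formula \re{del3}, which the paper simply cites from \cite{HMPW} rather than re-deriving via the Markov branching property as you do), and both then propose to verify that recursion against the explicit formula of Proposition \ref{prop10}. The re-derivation of the criterion is fine, as are your reductions for $\alpha=1$ and $\gamma=1-\alpha$ (modulo a slip: with $\gamma=1-\alpha$ the Ewens--Pitman parameter is $\theta=-\alpha-\gamma=-1$, not $2\alpha-1$).

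The gap is that the verification itself --- which is the entire substance of the paper's proof --- is not carried out; you explicitly defer it as ``the main obstacle'' and only describe what you would compute. Moreover, the shortcut you propose for it is doubtful as stated: the identity \re{del4} does not decouple into a ``$\gamma$-part'' settled by the already-known sampling consistency of $q^{\rm PD^*}_{\alpha,-\alpha-\gamma}$ plus a separate quadratic part, because the dissolving term involves $p^{\rm seq}_{\alpha,\gamma}(n-1,1)$ rather than $p^{\rm PD^*}_{\alpha,-\alpha-\gamma}(n-1,1)$, and the normalising constants $Z_n/\Gamma_\alpha(n)$ of the tilted rule differ from those of the untilted one, so the constant-weight contribution does not satisfy the ${\rm PD}^*$ consistency identity on its own. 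The paper avoids this by a direct computation: it writes $p^{\rm seq}_{\alpha,\gamma}(n_1,\ldots,n_i+1,\ldots,n_k)$ and $p^{\rm seq}_{\alpha,\gamma}(n_1,\ldots,n_k,1)$ each as $p^{\rm seq}_{\alpha,\gamma}(n_1,\ldots,n_k)$ plus an explicit correction proportional to $p^{\rm PD^*}_{\alpha,-\alpha-\gamma}(n_1,\ldots,n_k)$, multiplied by the factors $(n_i-\alpha)/(n-1-\alpha)$ and $((k-1)\alpha-\gamma)/(n-1-\alpha)$ respectively, and checks that on summing the corrections collapse to $-p^{\rm seq}_{\alpha,\gamma}(n-1,1)\,p^{\rm seq}_{\alpha,\gamma}(n_1,\ldots,n_k)$. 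Some such explicit computation is unavoidable here, and until it is done the argument is incomplete.
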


\begin{proof} The sampling consistency formula $(14)$ in \cite{HMPW} states that $d(P_n)=P_{n-1}$ is equivalent to
\begin{eqnarray}\label{del3}
%   d(P_n)&=&P_{n-1}\nonumber\\
%   &\Leftrightarrow&\nonumber\\
   q(n_1,\ldots,n_k)&=&\sum_{i=1}^{k}\frac{(n_i+1)(m_{n_i+1}+1)}{nm_{n_i}}q((n_1,\ldots,n_i+1,\ldots,n_k)^\downarrow)\nonumber\\
   &&+\frac{m_1+1}{n}q(n_1,\ldots,n_k,1)+\frac{1}{n}q(n-1,1)q(n_1,\ldots,n_k)
\end{eqnarray}
for all $n_1\ge\ldots\ge n_k\ge 1$ with $n_1+\ldots+n_k=n-1$, where $m_j$ is the number of $n_i$, $i\in[k]$, that equal $j$,
and where $q$ is the splitting rule of $T_n^\circ\sim P_n$. In terms of EPPFs (\ref{spliteppf}), formula (\ref{del3}) is
equivalent to 
\begin{equation}\label{del4}
\left(1-p(n-1,1)\right)p(n_1,\ldots,n_k)=\sum_{i=1}^kp((n_1,\ldots,n_i+1,\ldots,n_k)^\downarrow)+p(n_1,\ldots,n_k,1).
\end{equation}

Now according to Proposition \ref{prop1}, the EPPF of the
alpha-gamma model with $\alpha<1$ is
\begin{equation}
p_{\alpha,\gamma}^{\rm
seq}(n_1,\ldots,n_k)=\frac{Z_n}{\Gamma_\alpha(n)}\left(\gamma+(1-\alpha-\gamma)\frac{1}{(n-1)(n-2)}\sum_{u\neq
v}n_un_v\right)p_{\alpha,-\alpha-\gamma}^{\rm
PD^*}(n_1,\ldots,n_k),
\end{equation}
where $\Gamma_\alpha(n)=\Gamma(n-\alpha)/\Gamma(1-\alpha)$. Therefore, $p_{\alpha,\gamma}^{\rm seq}(n_1,\ldots,n_i+1,\ldots,n_k)$ can be written as
\begin{eqnarray} 
&&\hspace{-0.5cm}\left (p_{\alpha,\gamma}^{\rm
seq}(n_1,\ldots,n_k)+2(1-\alpha-\gamma)\frac{(n-2)(n-1-n_i)-\sum_{u\neq
v}n_u
n_v}{n(n-1)(n-2)}\frac{Z_n}{\Gamma_\alpha(n)}p_{\alpha,-\alpha-\gamma}^{\rm
PD^*}(n_1,\ldots,n_k)\right )\nonumber\\
&&\hspace{0.5cm}\times\frac{n_i-\alpha}{n-1-\alpha}\nonumber
\end{eqnarray}
and $p_{\alpha,\gamma}^{\rm seq}(n_1,\ldots,n_k,1)$ as
\begin{eqnarray}
&&\hspace{-0.5cm}\left (p_{\alpha,\gamma}^{\rm
seq}(n_1,\ldots,n_k)+2(1-\alpha-\gamma)\frac{(n-1)(n-2)-\sum_{u\neq
v}n_u
n_v}{n(n-1)(n-2)}\frac{Z_n}{\Gamma_\alpha(n)}p_{\alpha,-\alpha-\gamma}^{\rm
PD^*}(n_1,\ldots,n_k)\right )\nonumber\\
&&\hspace{0.5cm}\times\frac{(k-1)\alpha-\gamma}{n-1-\alpha}.\nonumber
\end{eqnarray}
Sum over the above formulas, then the right-hand side of
(\ref{del4}) is
\begin{equation}
\left
(1-\frac{1}{n-1-\alpha}\left(\gamma+\frac{2}{n}(1-\alpha-\gamma)\right
)\right)p_{\alpha,\gamma}^{\rm seq}(n_1,\ldots,n_k).\nonumber
\end{equation}
Notice that $p_{\alpha,\gamma}^{\rm
seq}(n-1,1)=\left(\gamma+2(1-\alpha-\gamma)/ n \right
)/(n-1-\alpha)$. Hence, the splitting rules of the alpha-gamma model
satisfy (\ref{del4}), which implies sampling consistency for $\alpha<1$. The case $\alpha=1$ is postponed to Section \ref{sectalpha1}.
\end{proof}

Moreover, sampling consistency can be enhanced to \em strong sampling
consistency \em \cite{HMPW} by requiring that $(T_{n-1}^\circ,T_{n}^\circ)$ has
the same distribution as $(T_{n,-1}^\circ,T_{n}^\circ)$. 
%It is
%implicit in Ford's work \cite{For-05} that alpha models are strongly
%sampling consistent if and only if $\alpha=1-\alpha$, i.e. $\alpha=1/2$. As 
%the alpha model is a special case of the alpha-gamma model, the alpha-gamma
%model is not strongly sampling consistent if $\gamma\neq 1-\alpha$. {\tt Why???}
%If $\gamma=1-\alpha$, the alpha-gamma model is the model
%related to stable tree which is strongly sampling consistent
%according to the discussion in \cite{HMPW}, see also Section \ref{sectstable}.

\begin{prop}\label{prop13}
\label{ssc} The alpha-gamma model is strongly sampling consistent if
and only if $\gamma=1-\alpha$.
\end{prop}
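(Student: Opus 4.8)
The plan is to recast strong sampling consistency as a detailed-balance identity between the one-step growth chain and uniform leaf deletion, and then to verify that identity directly.

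First I would observe that both pairs in the definition share the same pair of marginals: the second coordinate is $T_n^\circ\sim P_n$ in both, and by the ordinary sampling consistency already established in Proposition \ref{samplecons} the first coordinates satisfy $T_{n-1}^\circ\sim P_{n-1}$ and $T_{n,-1}^\circ\sim d(P_n)=P_{n-1}$. Hence $(T_{n-1}^\circ,T_n^\circ)\ed(T_{n,-1}^\circ,T_n^\circ)$ holds if and only if, for every $n$ and every shape $t^\circ\in\bT_n^\circ$, the conditional law of $T_{n-1}^\circ$ given $T_n^\circ=t^\circ$ — the time-reversal of the one-step growth chain, which is computable from the weights in rule (i) — coincides with the uniform-leaf-deletion kernel on $t^\circ$. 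I would treat $0\le\gamma\le\alpha<1$ throughout and defer the degenerate case $\alpha=1$ to Section \ref{sectalpha1}.

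For the ``if'' direction I would use exchangeability. When $\gamma=1-\alpha$, Proposition \ref{prop1}(b) gives that the labelled trees $(T_n)$ are exchangeable. Since the growth rule inserts leaf $n$ last, deleting leaf $n$ (and reducing any resulting degree-$2$ vertex) recovers $T_{n-1}$; applying the transposition $(i\,n)$ and invariance in law under relabelling shows that, jointly with $T_n$, deleting leaf $n$ has the same law as deleting a uniformly chosen leaf. Delabelling both coordinates yields $(T_{n-1}^\circ,T_n^\circ)\ed(T_{n,-1}^\circ,T_n^\circ)$, i.e. strong sampling consistency.

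For the ``only if'' direction I would produce a single explicit witness valid across the whole range $0\le\gamma\le\alpha<1$. Take $n=5$ and let $t^\circ$ be the binary shape with first split $(3,2)$, obtained by joining a $3$-leaf caterpillar with a cherry. A short case analysis shows its only one-step predecessors are the two binary $4$-leaf shapes — the $4$-caterpillar and the balanced tree — since a vertex-addition would create a vertex of degree $\ge 4$, which the binary $t^\circ$ does not possess, so only edge insertions can produce $t^\circ$. Using the growth weights (leaf edges $1-\alpha$, inner edges $\gamma$) together with the law of $T_3^\circ$ and the relevant growth kernels, I would compute the reversed-growth conditional law given $T_5^\circ=t^\circ$ to be $(2-2\alpha+\gamma)/(3-3\alpha+2\gamma)$ on the balanced shape and $(1-\alpha+\gamma)/(3-3\alpha+2\gamma)$ on the caterpillar. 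By contrast, uniform deletion from $t^\circ$ returns the balanced shape when any of the three caterpillar-side leaves is removed and the $4$-caterpillar when either cherry leaf is removed, i.e. with probabilities $3/5$ and $2/5$. Equating the two laws gives $5(1-\alpha+\gamma)=2(3-3\alpha+2\gamma)$, which simplifies to $\gamma=1-\alpha$; hence strong sampling consistency fails whenever $\gamma\neq1-\alpha$. The conceptual steps are routine; the main obstacle is the combinatorial bookkeeping here — correctly enumerating, for each predecessor shape, which single-leaf insertions produce $t^\circ$ and with which weights (and checking that vertex-additions are excluded), then matching against the uniform-deletion counts. The payoff is that one well-chosen multifurcation-free shape pins down the constraint $\gamma=1-\alpha$ uniformly over the parameter region, so no separate treatment of the binary line $\gamma=\alpha$ is required.
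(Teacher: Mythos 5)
Your proof is correct, and while the overall architecture matches the paper's (exchangeability of labels gives the ``if'' direction; an explicit small-tree witness gives the ``only if'' direction), your witness is genuinely different and in one respect stronger. The paper takes $n=4$ and a \emph{multifurcating} shape $\ft_4^\circ$ with first split $(2,1,1)$, obtained from the $3$-leaf caterpillar by a vertex insertion of weight $\alpha-\gamma$; both joint probabilities it compares therefore carry the factor $\alpha-\gamma$ and the comparison degenerates to $0=0$ on the binary line $\gamma=\alpha$ (for $\alpha\neq 1/2$), so that case is not literally settled by the paper's displayed computation. Your $5$-leaf binary shape with first split $(3,2)$ is reachable by leaf-edge insertions alone, hence has positive probability throughout $0\le\gamma\le\alpha<1$, and a single witness pins down $\gamma=1-\alpha$ uniformly. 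I checked your numbers: the predecessors are indeed only the two binary $4$-leaf shapes (a vertex insertion cannot produce a binary tree), the reversed-growth kernel puts mass proportional to $4(1-\alpha)+2\gamma$ on the balanced shape and $2(1-\alpha)+2\gamma$ on the $4$-caterpillar (after cancelling the common factors $\bP(T_3^\circ=\mathrm{cat}_3)$, $(1-\alpha)$, $(3-\alpha)$ and $(4-\alpha)$), giving exactly your normalised law, and equating with $(3/5,2/5)$ yields $\gamma=1-\alpha$. The price you pay is a slightly longer enumeration of insertions at $n=4,5$; the payoff is that no parameter subcase is left over. Your ``if'' direction via the transposition $(i\,n)$ and Proposition \ref{prop1}(b) is a clean self-contained substitute for the paper's appeal to sampling from the stable CRT in Section \ref{sectstable}.
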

\begin{proof} For $\gamma=1-\alpha$, the model is known to be strongly sampling consistent, cf. Section \ref{sectstable}.
\setlength{\unitlength}{0.5cm}
\begin{picture}(30,5.5)
\multiput(10,1)(10,0){2}{\line(0,1){2.8}} \multiput(10,
2.4)(10,0){2}{\line(1,1){1}} \multiput(10,
3.8)(10,0){2}{\line(1,1){1}} \multiput(10,
3.8)(10,0){2}{\line(-1,1){1}} \put(20,2.4){\line(-1,1){1}}
 \put(9.6,0){
$\mathbf{t}_3^\circ$} \put(19.7,0){$\mathbf{t}_4^\circ$}
\end{picture}

If $\gamma\neq 1-\alpha,$ consider the above two deterministic
unlabelled trees.
$$\bP(T_4^\circ=\ft_4^\circ)=q_{\alpha,\gamma}^{\rm
seq}(2,1,1)q_{\alpha,\gamma}^{\rm
seq}(1,1)=(\alpha-\gamma)(5-5\alpha+\gamma)/((2-\alpha)(3-\alpha)).$$
Then we delete one of the two leaves at the first branch point of
$\ft_4^\circ$ to get $\ft_3^\circ$. Therefore
$$\bP((T_{4,-1}^\circ,T_4^\circ)=(\ft_3^\circ,\ft_4^\circ))=\frac{1}{2}\bP(T_4^\circ=\ft_4^\circ)
=\frac{(\alpha-\gamma)(5-5\alpha+\gamma)}{2(2-\alpha)(3-\alpha)}.$$
On the other hand, if $T_3^\circ=\ft_3^\circ$, we have to add the
new leaf to the first branch point to get $\ft_4^\circ$. Thus
$$\bP((T_3^\circ,T_4^\circ)=(\ft_3^\circ,\ft_4^\circ))=\frac{\alpha-\gamma}{3-\alpha}\bP(T_3^\circ=\ft_3^\circ)=
\frac{(\alpha-\gamma)(2-2\alpha+\gamma)}{(2-\alpha)(3-\alpha)}.$$ It
is easy to check that
$\bP((T_{4,-1}^\circ,T_4^\circ)=(\ft_3^\circ,\ft_4^\circ))\neq
\bP((T_3^\circ,T_4^\circ)=(\ft_3^\circ,\ft_4^\circ))$ if $\gamma\neq
1-\alpha$, which means that the alpha-gamma model is then not strongly
sampling consistent.
\end{proof}

\section{Dislocation measures and asymptotics of alpha-gamma trees}
\subsection{Dislocation measures associated with the alpha-gamma-splitting rules}

Theorem \ref{thm2} claims that the alpha-gamma trees are sampling consistent, which we proved in 
Section \ref{sectsc}, and identifies the integral representation of the splitting rule in terms of
a dislocation measure, which we will now establish.

\begin{proof}[Proof of Theorem \ref{thm2}] Firstly, we make some rearrangement for the coefficient of the
sampling consistent splitting rules of alpha-gamma trees identified in Proposition \ref{prop10}:
\begin{eqnarray}
&&\hspace{-0.5cm}\gamma+(1-\alpha-\gamma)\frac{1}{n(n-1)}\sum_{i\neq j}n_in_j\nonumber\\
&&=\frac{(n+1-\alpha-\gamma)(n-\alpha-\gamma)}{n(n-1)}\left(\gamma+(1-\alpha-\gamma)\left(\sum_{i\neq j}A_{ij}
+2\sum_{i=1}^kB_i+C\right)\right),\nonumber
\end{eqnarray}
where\vspace{-0.1cm}
\begin{eqnarray}
A_{ij}&=&\frac{(n_i-\alpha)(n_j-\alpha)}{(n+1-\alpha-\gamma)(n-\alpha-\gamma)},\nonumber\\
B_i&=&\frac{(n_i-\alpha)((k-1)\alpha-\gamma)}{(n+1-\alpha-\gamma)(n-\alpha-\gamma)},\nonumber\\
C&=&\frac{((k-1)\alpha-\gamma)(k\alpha-\gamma)}{(n+1-\alpha-\gamma)(n-\alpha-\gamma)}.\nonumber
\end{eqnarray}
Notice that $B_ip_{\alpha,-\alpha-\gamma}^{\rm PD^*}(n_1,\ldots,n_k)$ simplifies to
\begin{eqnarray}
&&\hspace{-0.5cm}\frac{(n_i-\alpha)((k-1)\alpha-\gamma)}{(n+1-\alpha-\gamma)(n-\alpha-\gamma)}\frac{\alpha^{k-2}\Gamma(k-1-\gamma/\alpha)}{Z_n\Gamma(1-\gamma/\alpha)}\Gamma_\alpha(n_1)\ldots\Gamma_\alpha(n_k)\nonumber\\
&&=\frac{Z_{n+2}}{Z_n(n+1-\alpha-\gamma)(n-\alpha-\gamma)}\frac{\alpha^{k-1}\Gamma(k-\gamma/\alpha)}{Z_{n+2}\Gamma(1-\gamma/\alpha)}\Gamma_\alpha(n_1)\ldots\Gamma_\alpha(n_i+1)\ldots\Gamma_\alpha(n_k)\nonumber\\
&&=\frac{\widetilde{Z}_{n+2}}{\widetilde{Z}_n}p_{\alpha,-\alpha-\gamma}^{\rm
PD^*}(n_1,\ldots,n_i+1,\ldots,n_k,1),\nonumber
\end{eqnarray}
where $\Gamma_\alpha(n)=\Gamma(n-\alpha)/\Gamma(1-\alpha)$ and 
$\widetilde{Z}_n=Z_n\alpha\Gamma(1-\gamma/\alpha)/\Gamma(n-\alpha-\gamma)$ is the normalisation constant in (\ref{Kingman})
for $\nu={\rm PD}^*_{\alpha,-\gamma-\alpha}$, as can be read from \cite[Formula (17)]{HPW}.
According to (\ref{Kingman}),
\begin{equation}
p_{\alpha,-\alpha-\gamma}^{\rm
PD^*}(n_1,\ldots,n_k)=\frac{1}{\widetilde{Z}_n}\int_{\cS^\downarrow}\sum_{\substack{i_1,\ldots,i_k\geq
1\\ {\rm distinct}}}\prod_{l=1}^k s_{i_l}^{n_l} {\rm
PD}^*_{\alpha,-\alpha-\gamma}(ds).\nonumber
\end{equation}
Thus,
\begin{eqnarray}
B_ip_{\alpha,-\alpha-\gamma}^{\rm PD^*}(n_1,\ldots,n_k)&=&
\frac{1}{\widetilde{Z}_n}\int_{\cS^\downarrow}\sum_{\substack{i_1,\ldots,i_k\geq 1\\
{\rm distinct}}}\prod_{l=1}^k s_{i_l}^{n_l}
%\sum_{\substack{u\in\{i_1,\ldots,i_k\}\\ v\not\in
%\{i_1,\ldots,i_k\}}}s_us_v
\left(\sum_{u\in\{i_1,\ldots,i_k\},v\not\in
\{i_1,\ldots,i_k\}}s_us_v\right)
{\rm PD}^*_{\alpha,-\alpha-\gamma}(ds)\nonumber
\end{eqnarray}
 Similarly,
\begin{eqnarray}
A_{ij}p_{\alpha,-\alpha-\gamma}^{\rm
PD^*}(n_1,\ldots,n_k)&=&\frac{1}{\widetilde{Z}_n}\int_{\cS^\downarrow}\sum_{\substack{i_1,\ldots,i_k\geq 1\\
{\rm distinct}}}\prod_{l=1}^k s_{i_l}^{n_j}
\left(\sum_{u,v\in\{i_1,\ldots,i_k\}:u\neq v}s_us_v\right)
 {\rm PD}^*_{\alpha,-\alpha-\gamma}(ds)\nonumber\\
Cp_{\alpha,-\alpha-\gamma}^{\rm
PD^*}(n_1,\ldots,n_k)&=&\frac{1}{\widetilde{Z}_n}\int_{\cS^\downarrow}\sum_{\substack{i_1,\ldots,i_k\geq 1\\
{\rm distinct}}}\prod_{l=1}^k s_{i_l}^{n_l}
\left(\sum_{u,v\not\in\{i_1,\ldots,i_k\}:u\neq v}s_us_v\right)
{\rm PD}^*_{\alpha,-\alpha-\gamma}(ds),\nonumber
\end{eqnarray}
Hence,
the EPPF $p_{\alpha,\gamma}^{\rm seq}(n_1,\ldots,n_k)$ of the sampling consistent splitting rule takes the following
form:
\begin{eqnarray}\label{nuag3}
&&\hspace{-0.5cm}\frac{(n+1-\alpha-\gamma)(n-\alpha-\gamma)Z_n}{n(n-1)\Gamma_\alpha(n)}\left(\gamma+(1-\alpha-\gamma)\left(\sum_{i\neq j}A_{ij}
+2\sum_{i=1}^kB_i+C\right)\right)p_{\alpha,\gamma}^{\rm PD^*}(n_1,\ldots,n_k)\nonumber\\ 
%&&=\bigg(gamma+(1-\alpha-\gamma)\frac{1}{n(n-1)}\sum_{i\neq
%j}n_in_j\bigg)\frac{Z_n}{\Gamma_\alpha(n)}q_{\alpha,-\alpha-\gamma}^{\rm
%PD^*}(n_1,\ldots,n_k)\nonumber\\
%&=&\frac{1}{Y_n}\int_{\cS^\downarrow}\sum_{\substack{i_1,\ldots ,i_k\geq 1\\
%{\rm distinct}}}\Bigg(\gamma+(1-\alpha-\gamma) \bigg(\sum_{\substack{u\neq v\\
%u,v\in\{i_1,\ldots,i_k\}}}s_u
%s_v+2\sum_{\substack{u\in\{i_1,\ldots,i_k\}\\ v\neq
%i_1,\ldots,i_k}}s_us_v+\sum_{u,v\neq i_1,\ldots,i_k}s_us_v\bigg)\Bigg)\nonumber\\
%&&\times \prod_{j=1}^k s_{i_j}^{n_j}{\rm
%PD^*}_{\alpha,-\alpha-\gamma}(ds)\nonumber\\
&&=\frac{1}{Y_n}\int_{\cS^\downarrow}\sum_{\substack{i_1,\ldots ,i_k\geq
1\\ {\rm distinct}}}\prod_{j=1}^k
s_{i_j}^{n_j}\left(\gamma+(1-\alpha-\gamma)\sum_{i\neq j}
s_is_j\right){\rm PD}^*_{\alpha,-\alpha-\gamma}(ds),
\end{eqnarray}
where
$Y_n=n(n-1)\Gamma_\alpha(n)\alpha\Gamma(1-\gamma/\alpha)/\Gamma(n+2-\alpha-\gamma)$
is the normalization constant. Hence, we have
$\nu_{\alpha,\gamma}(ds)=\Big(\gamma+(1-\alpha-\gamma)\sum_{i\neq j}
s_is_j\Big){\rm PD^*}_{\alpha,-\alpha-\gamma}(ds)$.
\end{proof}

\subsection{The alpha-gamma model when $\alpha=1$, spine with bushes of singleton-trees}\label{sectalpha1}

Within the discussion of the alpha-gamma model so far, we restricted to $0\leq\alpha<1$. 
In fact, we can still get some interesting results when $\alpha=1$. 
The weight of each leaf edge is $1-\alpha$ in the growth
procedure of the alpha-gamma model. If $\alpha=1$, the weight of each
leaf edge becomes zero, which means that the new leaf can only be
inserted to internal edges or branch points. Starting from the two
leaf tree, leaf 3 must be inserted into the root edge or the branch
point. Similarly, any new leaf must be inserted into the spine
leading from the root to the common ancestor of leaf 1 and leaf 2. 
Hence, the shape of the tree is just a spine with some
bushes of one-leaf subtrees rooted on it. Moreover, the first split
of an $n$-leaf tree will be $(n-k+1,1,\ldots,1)$ for some $2\leq k\leq
n-1$. The cases $\gamma=0$ and $\gamma=1$ lead to degenerate trees
 with, 
respectively, all leaves connected to a single branch point and all leaves 
connected to a spine of binary branch points (comb).

\begin{prop}\label{alpha1}
Consider the alpha-gamma model with $\alpha=1$ and $0<\gamma<1$.
\begin{enumerate}\item[\rm(a)] The model is sampling consistent with splitting rules 
   \begin{eqnarray}\label{a11}
   \hspace{-0.5cm}&&\hspace{-0.5cm}q_{1,\gamma}^{\rm seq}(n_1,\ldots,n_k)\nonumber\\
   \hspace{-0.5cm}&&=\begin{cases}
   \gamma\Gamma_\gamma(k-1)/(k-1)!, &\text{if}\ 2\leq k\leq n-1\ \text{and}\ (n_1,\ldots,n_k)=(n-k+1,1,\ldots,1);\\
   \Gamma_\gamma(n-1)/(n-2)!, &\text{if}\ k=n\ \text{and}\
   (n_1,\ldots,n_k)=(1,\ldots,1);\\
   0,&\text{otherwise},
   \end{cases}
   \end{eqnarray}
where $n_1\geq\ldots \geq n_k\geq 1$ and $n_1+\ldots +n_k=n$.
  \item[\rm(b)] The dislocation measure associated with the splitting
  rules can be expressed as follows
  \begin{equation}
  \int_{\cS^\downarrow}f(s_1,0,\ldots)\nu_{1,\gamma}(ds)=\int_0^1
  f(s_1,0,\ldots)\left(\gamma(1-s_1)^{-1-\gamma}ds_1+\delta_0(ds_1)\right).
  \end{equation}
  In particular, it does \em not \em satisfy $\nu(\{s\in\cS^\downarrow:s_1+s_2+\ldots<1\})=0$. 
\end{enumerate}
\end{prop}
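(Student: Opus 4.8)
The plan is to exploit the spinal decomposition of Lemma~\ref{spinaldec}, which remains valid at $\alpha=1$: the ordered Chinese restaurant governing the bushes has parameters $(\gamma,1-\alpha)=(\gamma,0)$, admissible in Lemma~\ref{OCRL} since $0<\gamma<1$ and $\theta=0\ge0$, while the within-bush restaurant has parameters $(\alpha,-\gamma)=(1,-\gamma)$, admissible in Lemma~\ref{crp1} as $-\gamma>-1$. The sole effect of $\alpha=1$ is that a restaurant with $\alpha=1$ seats every customer at a fresh table, so each bush consists only of singleton subtrees; hence $T_n^\circ$ is a caterpillar — a spine carrying bushes of leaves with leaf $1$ at its tip — entirely encoded by the regenerative composition $\cC_{n-1}=(N_1,\ldots,N_{L_{n-1}})$ of the $n-1$ non-leaf-$1$ leaves. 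For part~(a) I would first record that the first split of $T_n^\circ$ is $(n-N_1,1,\ldots,1)$ with $N_1$ ones, so $q_{1,\gamma}^{\rm seq}(n-k+1,1,\ldots,1)=\bP(N_1=k-1)=q_{\gamma,0}^{\rm dec}(n-1,k-1)$. Specialising Lemma~\ref{OCRL} at $\theta=0$ collapses the decrement matrix, for $m<N$, to the $N$-independent value $q_{\gamma,0}^{\rm dec}(N,m)=\gamma\Gamma_\gamma(m)/m!$, giving the stated $\gamma\Gamma_\gamma(k-1)/(k-1)!$ for $2\le k\le n-1$; the star case $k=n$, i.e. $N_1=n-1$, is the diagonal entry, which I would evaluate as the complementary probability $1-\sum_{m=1}^{n-2}q_{\gamma,0}^{\rm dec}(n-1,m)$ using the telescoping $\gamma\Gamma_\gamma(m)/m!=\Gamma_\gamma(m)/(m-1)!-\Gamma_\gamma(m+1)/m!$, which yields $\Gamma_\gamma(n-1)/(n-2)!$. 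All other compositions carry no mass, since a caterpillar has at most one part exceeding $1$.

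For sampling consistency I would verify the deletion identity~\re{del3} directly against the explicit splitting rule, rather than through the EPPF form~\re{del4}, which requires an exchangeability that fails here. Both sides of~\re{del3} vanish on any composition with two parts $\ge2$ (incrementing a part or appending a $1$ cannot remove a second large part), so it suffices to treat caterpillar compositions of $n-1$. For $(N,1,\ldots,1)$ with $N=n-k\ge2$, only the increment of the big part survives the first sum (incrementing a $1$ would create a second part $\ge2$, giving a zero term), and the three contributions collapse, via $\Gamma_\gamma(k)=(k-1-\gamma)\Gamma_\gamma(k-1)$, to the identity $(n-k+1)+(k-1-\gamma)+\gamma=n$; the all-ones case $N=1$ reduces similarly, after inserting the star formula for the appended term, to $(n-2-\gamma)+\gamma=n-2$. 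Both recover the claimed left-hand side.

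For part~(b) the key point is that $\nu_{1,\gamma}$ is an \emph{improper} dislocation measure, so the proper-case formula~\re{Kingman} does not apply; instead I would use the general integral representation with erosion $c=0$, under which $n$ leaves are dropped independently into the fragments of $s$ and into the lost mass $1-\sum_i s_i$, each leaf hitting the lost mass becoming a singleton block. For the claimed measure, carried by the axis $(s_1,0,0,\ldots)$ with $s_1\in[0,1)$, this is a single $\mathrm{Bin}(n,s_1)$ allocation: if $j\ge2$ leaves hit the lone fragment the split is $(j,1,\ldots,1)$, while $j\in\{0,1\}$ gives the star. I would then compute, for $j\ge2$,
\[
\int_{\cS^\downarrow}\binom{n}{j}s_1^{\,j}(1-s_1)^{n-j}\,\nu_{1,\gamma}(ds)=\binom{n}{j}\gamma\,B(j+1,n-j-\gamma),
\]
together with the pooled star rate $\int_{\cS^\downarrow}[(1-s_1)^n+ns_1(1-s_1)^{n-1}]\,\nu_{1,\gamma}(ds)$, where the $\delta_0$-atom contributes $1$ to the star. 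Comparing with part~(a) shows every such rate equals $n!\,\Gamma(1-\gamma)/\Gamma(n+1-\gamma)$ times the corresponding $q_{1,\gamma}^{\rm seq}$, so $\nu_{1,\gamma}$ reproduces the splitting rule with $\widetilde Z_n=n!\,\Gamma(1-\gamma)/\Gamma(n+1-\gamma)=\int_{\cS^\downarrow}(1-s_1^n)\nu_{1,\gamma}(ds)$. The ``in particular'' is then immediate: $\nu_{1,\gamma}$ sits on sequences with $\sum_i s_i=s_1<1$, so it charges $\{s:s_1+s_2+\cdots<1\}$.

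The main obstacle I anticipate is in part~(b): correctly invoking the integral representation \emph{outside} the proper regime of~\re{Kingman}, that is, pinning down how the improper measure and its atom at the zero sequence translate into dust singletons, and then checking that this produces the \emph{same} normalisation $\widetilde Z_n$ for the caterpillar and the star splits simultaneously. The $\alpha=1$ boundary effects in part~(a) — the $\Gamma(0)$ hidden in the diagonal of the decrement matrix, resolved through the complementary probability and telescoping — are a secondary but genuine point of care.
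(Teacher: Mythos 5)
Your proposal is correct, and for the derivation of the splitting rule in part (a) it takes a genuinely different route from the paper. You specialise the spinal-decomposition machinery (Lemma \ref{spinaldec} together with the ordered CRP of Lemma \ref{OCRL}) to $\alpha=1$, noting that the within-bush $(1,-\gamma)$-restaurant seats every customer at a new table so that the tree is a spine with bushes of singletons, and then read off $q^{\rm seq}_{1,\gamma}(n-k+1,1,\ldots,1)=q^{\rm dec}_{\gamma,0}(n-1,k-1)$; the off-diagonal entries collapse to the $N$-independent value $\gamma\Gamma_\gamma(k-1)/(k-1)!$ and you resolve the $0\cdot\Gamma(0)$ indeterminacy on the diagonal by complementation and the telescoping identity, which checks out. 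The paper instead computes the same probabilities directly from the growth rules, by conditioning on the step $l$ at which the first branch point is created (probability $\gamma/(l-2)$ of splitting the root edge), multiplying the subsequent insertion probabilities to get $\gamma\Gamma_\gamma(k-1)(n-k-1)!/(n-2)!$ independently of $l$, and counting the ${n-2\choose n-k-1}$ admissible labellings. Your approach buys uniformity with the general $\alpha<1$ argument of Proposition \ref{prop10} and makes the caterpillar structure transparent; the paper's is more elementary and self-contained at the boundary. For sampling consistency both you and the paper verify the same criterion \re{del3} restricted to caterpillar compositions (your verification via $\Gamma_\gamma(k)=(k-1-\gamma)\Gamma_\gamma(k-1)$ is in fact more detailed than the paper's, which only displays the two reduced identities), and your part (b) — rewriting the rates as $\binom{n}{n-k+1}\gamma B(n-k+2,k-1-\gamma)$, pooling $j\in\{0,1\}$ into the star split, and invoking the improper-measure extension of \re{Kingman} from \cite[Formula (2)]{HMPW} with $\widetilde Z_n=n!\,\Gamma(1-\gamma)/\Gamma(n+1-\gamma)$ — is essentially identical to the paper's computation, including the normalisation constant $Y_n$.
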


\begin{proof}(a) We start from the growth procedure of the
alpha-gamma model when $\alpha=1$. Consider a first split into
$(n-k+1,1,\ldots,1)$ for some labelled $n$-leaf tree. Suppose its
first branch point is created when the leaf $l$ is inserted to the
root edge for $l\geq 3$. At this time the first split is $(l-1,1)$
with a probability $\gamma/(l-2)$ as $\alpha=1$. In the following
insertion, leaves $l+1,\ldots,n$ have been added either to the first
branch point or to the subtree with $l-1$ leaves at this time. Hence
the probability that the first split of this tree is
$(n-k+1,1,\ldots,1)$ is 
$$\frac{(n-k-1)!}{(n-2)!}\gamma\Gamma_\gamma(k-1),$$ which does not depend
on $l$. Notice that the growth rules imply that if the first split is
$(n-k+1, 1,\ldots,1)$ with $k\le n-1$, then leaves $1$ and $2$ will be 
located in the subtree with $n-k+1$ leaves. There are ${n-2\choose n-k-1}$ 
labelled trees with the above first split. Therefore,
$$q_{1,\gamma}^{\rm seq}(n-k+1, 1,\ldots,1)={n-2\choose
n-k-1}\frac{(n-k-1)!}{(n-2)!}\gamma\Gamma_\gamma(k-1)=\gamma\Gamma_\gamma(k-1)/(k-1)!.$$
On the other hand, there is only one $n$-leaf labelled tree with a
first split $(1,\ldots,1)$ and in this case, all leaves have been added
to the only branch point% where leaves 1 and 2 are separated
. Hence
$$q_{1,\gamma}^{\rm seq}(1,\ldots,1)=\Gamma_\gamma(n-1)/(n-2)!.$$
For sampling consistency, we check criterion (\ref{del3}), which reduces to the two formulas
%formulas% the following two formulas
%when $\alpha=1$ in the alpha-gamma model:
\begin{eqnarray}
(1-q_{1,\gamma}^{\rm seq}(n-1,1))q_{1,\gamma}^{\rm seq}(n-k,
1,\ldots,1)&=&\frac{k}{n}q_{1,\gamma}^{\rm seq}(n-k,
1,\ldots,1)\nonumber\\&&+\frac{n-k+1}{n}q_{1,\gamma}^{\rm
seq}(n-k+1,
1,\ldots,1)\nonumber\\
(1-q_{1,\gamma}^{\rm seq}(n-1,1))q_{1,\gamma}^{\rm seq}(
1,\ldots,1)&=&\frac{2}{n}q_{1,\gamma}^{\rm seq}(2,
1,\ldots,1)+q_{1,\gamma}^{\rm seq}(1,\ldots,1).\nonumber
\end{eqnarray}
%which is easily done. % to check that the splitting rules satisfy the two
%formulas above. 
%Hence sampling consistency follows.

(b) According to (\ref{a11}),
\begin{eqnarray}\label{a12}
&&\hspace{-0.5cm}q_{1,\gamma}^{\rm seq}(n-k+1,1,\ldots,1)\nonumber\\
&&={n\choose
n-k+1}\frac{\Gamma_\gamma(n+1)}{n!}\gamma
B(n-k+2,k-1-\gamma)\nonumber\\
&&=\frac{1}{Y_n}{n\choose n-k+1}\int_0^1
s_1^{n-k+1}(1-s_1)^{k-1}\left(\gamma(1-s_1)^{-1-\gamma}ds_1\right)\nonumber\\
&&=\frac{1}{Y_n}{n\choose n-k+1}\int_0^1
s_1^{n-k+1}(1-s_1)^{k-1}\left((\gamma(1-s_1)^{-1-\gamma}ds_1+\delta_0(ds_1)\right),
\end{eqnarray}
where $Y_n=n!/\Gamma_\gamma(n+1)$. Similarly,
\begin{equation}\label{a13}
 q_{1,\gamma}^{\rm
seq}(1,\ldots,1)=\frac{1}{Y_n}\int_0^1
\left(n(1-s_1)^{n-1}s_1+(1-s_1)^n\right)\left((\gamma(1-s_1)^{-1-\gamma}ds_1+\delta_0(ds_1)\right).
\end{equation}
Formulas (\ref{a12}) and (\ref{a13}) are of the form of \cite[Formula (2)]{HMPW}, which generalises (\ref{Kingman}) to
the case where $\nu$ does not necessarily satisfy $\nu(\{s\in\cS^\downarrow:s_1+s_2+\ldots<1\})=0$, hence 
$\nu_{1,\gamma}$ is identified.
\end{proof}

\subsection{Continuum random trees and self-similar trees}\label{seccrts}
Let $B\subset \mathbb{N}$ finite. A \em labelled tree with edge lengths \em is a pair
$\vartheta=(\mathbf{t},\eta)$, where
$\mathbf{t}\in\mathbb{T}_B$ is a labelled tree, $\eta=(\eta_A,A\in \mathbf{t}\setminus\{\mbox{\sc root}\})$ is a collection of marks, %, $E(\mathbf{t})$ is the set of edges of
%$\mathbf{t}$, 
and every edge $C\rightarrow A$ of $\ft$ is associated with mark $\eta_A\in(0,\infty)$
at vertex $A$, which we interpret as the \em edge length \em of $C\rightarrow A$. 
%The tree $\mathbf{t}$ is called the \em labelled shape\em, and we 
Let $\Theta_B$ be the set of such trees $(\ft,\eta)$ with $\ft\in\mathbb{T}_B$.

We now introduce continuum trees, following the construction
by Evans et al. in \cite{EPW}. A complete separable metric space
$(\tau,d)$ is called an $\mathbb{R}$-tree, if it satisfies
the following two conditions:
\begin{enumerate}
   \item for all $x,y\in \tau$, there is an isometry $\varphi_{x,y}:[0,d(x,y)]\rightarrow
   \tau$ such that $\varphi_{x,y}(0)=x$ and
   $\varphi_{x,y}(d(x,y))=y$,
   \item for every injective path $c:[0,1]\rightarrow \tau$ with
   $c(0)=x$ and  $c(1)=y$, one has $c([0,1])=\varphi_{x,y}([0,d(x,y)])$.
\end{enumerate}
We will consider rooted $\mathbb{R}$-trees $(\tau,d,\rho)$, where $\rho\in\tau$ is
a distinguished element, the \em root\em. We think of the root as the lowest element of the tree.\pagebreak[2]

We denote the range of $\varphi_{x,y}$ by
$[[x,y]]$ and call the
 quantity $d(\rho,x)$ the \em height \em of $x$. We say that $x$ is an
 ancestor of $y$ whenever $x\in [[\rho,y]]$. We let $x\wedge
 y$ be
 the unique element in $\tau$ such that
 $[[\rho,x]]\cap[[\rho,y]]=[[\rho,x\wedge y]]$, and call it the
 \em highest common ancestor \em of $x$ and $y$ in $\tau$. Denoted by
 $(\tau_x,d|_{\tau_x},x)$ the set of $y\in\tau$ such that $x$ is 
 an ancestor of $y$, which is an $\mathbb{R}$-tree  rooted at $x$ that
we call the \em fringe subtree \em of $\tau$ above $x$.

Two rooted $\mathbb{R}$-trees $(\tau,d,\rho),(\tau^\prime,d^\prime,\rho^\prime)$ are
called \em equivalent \em if there is a bijective isometry between the two
metric spaces that maps the root of one to the root of the other. We
also denote by $\Theta$ the set of equivalence classes of compact
rooted $\mathbb{R}$-trees. We define the \em Gromov-Hausdorff distance \em between two
rooted $\bR$-trees (or their equivalence classes) as 
\beqs d_{\rm GH}(\tau,\tau^\prime)=\inf\{d_{\rm H}(\widetilde{\tau},\widetilde{\tau}^\prime)\}
\eeqs
where the infimum is over all metric spaces $E$ and isometric embeddings $\widetilde{\tau}\subset E$ of
$\tau$ and $\widetilde{\tau}^\prime\subset E$ of $\tau^\prime$ with common root $\widetilde{\rho}\in E$; the Hausdorff distance on compact 
subsets of $E$ is denoted by $d_{\rm H}$. Evans et al. \cite{EPW} showed that $(\Theta,d_{\rm GH})$ is 
a complete separable metric space.

We call an element $x\in \tau$, $x\neq \rho$, in a rooted
$\mathbb{R}$-tree $\tau$, a \em leaf \em if its  removal does not disconnect
$\tau$, and let $\mathcal{L}(\tau)$ be the set of leaves of $\tau$.
On the other hand, we call an element of $\tau$ a \em branch point\em, if
it has the form $x\wedge y$ where $x$ is neither an ancestor of $y$
nor vice-visa. Equivalently, we can define branch points as points
disconnecting $\tau$ into three or more connected
components when removed. We let $\mathcal{B}(\tau)$ be the set of
branch points of $\tau$.

A \em weighted $\bR$-tree \em $(\tau,\mu)$ is called a \em continuum tree \em \cite{Ald-91}, if $\mu$ 
is a probability measure on $\tau$ and
\begin{enumerate}
   \item $\mu$ is supported by the set $\mathcal{L}(\tau)$,
   \item $\mu$ has no atom,
   \item for every $x\in\tau\backslash\mathcal{L}(\tau)$,
   $\mu(\tau_x)>0$.
\end{enumerate}
A \em continuum random tree (CRT) \em is a random variable whose values are
continuum trees, defined on some probability space
$(\Omega,\mathcal{A},\mathbb{P})$. Several methods to formalize 
this have been developed \cite{Ald-CRT3,EW,GPW}. For technical simplicity, 
we use the method of Aldous \cite{Ald-CRT3}. Let the space
$\ell_1=\ell_1(\mathbb{N})$ be the base space for defining CRTs. We
endow the set of compact subsets of $\ell_1$ with the Hausdorff
metric, and the set of probability measures on $\ell_1$ with any
metric inducing the topology of weak convergence, so that the set of
pairs $(T,\mu)$ where $T$ is a rooted $\mathbb{R}$-tree embedded as
a subset of $\ell_1$ and $\mu$ is a measure on $T$, is endowed with
the product $\sigma$-algebra.

%{\tt Existence of limiting frequencies $|\pi|$ for exchangeable
%partitions; this does not apply for our processes!} 

An exchangeable \em $\mathcal{P}_\bN$-valued fragmentation process \em 
$(\Pi(t),t\geq0)$ is called \em self-similar \em with index 
$a\in \mathbb{R}$ if given $\Pi(t)=\pi=\{\pi_i,i\ge 1\}$ with
asymptotic frequencies $|\pi_i|=\lim_{n\rightarrow\infty}n^{-1}\#[n]\cap\pi_j$, 
the random variable $\Pi(t+s)$ has the same law as the random partition
whose blocks are those of $\pi_i\cap\Pi^{(i)}(|\pi_i|^a s),i\geq 1$,
where $(\Pi^{(i)}, i\geq1)$ is a sequence of i.i.d. copies of
$(\Pi(t),t\geq 0)$. The process 
$(|\Pi(t)|^\downarrow,t\geq 0)$ is an \em $S^\downarrow$-valued 
self-similar fragmentation process\em. Bertoin \cite{Ber-ss} proved that the distribution of a $\mathcal{P}_\bN$-valued
self-similar fragmentation process is determined by a triple $(a,c,\nu)$,
where $a\in\bR$, $c\ge 0$ and $\nu$ is a dislocation measure on 
$S^\downarrow$. For this article, we are only interested in 
the case $c=0$ and when $\nu(s_1+s_2+\ldots<1)=0$. We call $(a,\nu)$ the 
characteristic pair. When $a=0$, the process $(\Pi(t),t\ge 0)$ is also called \em homogeneous fragmentation process\em.

A CRT $(\mathcal{T},\mu)$ is a \em self-similar CRT \em
with index $a=-\gamma<0$ if for every $t\geq 0$, given
$(\mu(\mathcal{T}_t^i),i\geq 1))$ where $\mathcal{T}_t^i, i\geq 1$
is the ranked order of connected components of the open set
$\{x\in\tau: d(x,\rho(\tau))>t\}$, the continuum random trees
$$
   \left
   (\mu(\mathcal{T}^1_t)^{-\gamma}\mathcal{T}_t^1,\frac{\mu(\cdot
   \cap\mathcal{T}_t^1)}{\mu(\mathcal{T}^1_t)}\right ),  \left
   (\mu(\mathcal{T}^2_t)^{-\gamma}\mathcal{T}_t^2,\frac{\mu(\cdot
   \cap\mathcal{T}_t^2)}{\mu(\mathcal{T}^2_t)}\right ),\ldots 
$$
are i.i.d copies of $(\mathcal{T},\mu)$, where
$\mu(\mathcal{T}^i_t)^{-\gamma}\mathcal{T}_t^i$ is the tree that has
the same set of points as $\mathcal{T}^i_t$, but whose distance function is
divided by $\mu(\mathcal{T}^i_t)^{\gamma}$.  Haas and Miermont in
\cite{HM} have shown that there exists a self-similar continuum
random tree $\cT_{(\gamma,\nu)}$ characterized by such a pair 
$(\gamma,\nu)$, which can
be constructed from a self-similar fragmentation
process with characteristic pair $(\gamma,\nu)$.

\subsection{The alpha-gamma model when $\gamma=1-\alpha$, sampling from the stable CRT}\label{sectstable}

Let $(\cT,\rho,\mu)$ be the stable tree of Duquesne and Le Gall \cite{DuL-02}. The distribution on $\Theta$ of any CRT is 
determined by its so-called finite-dimensional marginals: the distributions of $\cR_k$, $k\ge 1$, the subtrees $\cR_k\subset\cT$ 
defined as the discrete trees with edge lengths spanned by $\rho,U_1,\ldots,U_k$, where given $(\cT,\mu)$, the sequence 
$U_i\in\cT$, $i\ge 1$, of leaves is sampled independently from $\mu$. See also \cite{Mie-05,DuL-05,HMPW,HPW,Mar-08} for various
approaches to stable trees. Let us denote the discrete tree without edge lengths associated with $\cR_k$ by $T_k$ and note the Markov branching structure.

\begin{lemm}[Corollary 22 in \cite{HMPW}] Let $1/\alpha\in(1,2]$. The trees $T_n$, $n\ge 1$, sampled from the $(1/\alpha)$-stable
 CRT are Markov branching trees, 
  whose splitting rule has EPPF 
  \beqs
  p^{\rm stable}_{1/\alpha}(n_1,\ldots,n_k)=\frac{\alpha^{k-2}\Gamma(k-1/\alpha)\Gamma(2-\alpha)}{\Gamma(2-1/\alpha)\Gamma(n-\alpha)}
\prod_{j=1}^k\frac{\Gamma(n_j-\alpha)}{\Gamma(1-\alpha)}
  \eeqs
  for any $k\ge 2$, $n_1\ge1,\ldots,n_k\ge 1$, $n=n_1,\ldots,n_k$.
\end{lemm}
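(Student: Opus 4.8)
The statement is recorded as Corollary 22 of \cite{HMPW}; the plan is to obtain it from the self-similar fragmentation structure of the stable tree, feeding that structure into the discrete description (\ref{Kingman}). Write $(\cT,\rho,\mu)$ for the $(1/\alpha)$-stable tree of \cite{DuL-02} with $1/\alpha\in(1,2]$. The first step is to record that $(\cT,\mu)$ is a self-similar CRT in the sense of Section \ref{seccrts}: cutting $\cT$ at heights produces an $\cS^\downarrow$-valued self-similar fragmentation whose index is $a=-(1-\alpha)$ (matching the scaling $n^{\gamma}$ with $\gamma=1-\alpha$ in Corollary \ref{dconv}) and whose dislocation measure $\nu$ is, by Miermont \cite{Mie-03}, a constant multiple of a Poisson--Dirichlet-type measure. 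The key identification I would make precise is that $\nu$ is proportional to ${\rm PD}^*_{\alpha,-1}$: the index $a=-(1-\alpha)$ and the normalisation $\theta=-\alpha-\gamma$ at $\gamma=1-\alpha$ both point to $\theta=-1$, and one checks $-1\in[-2\alpha,-\alpha]$ for all $\alpha\in[1/2,1)$, the endpoint $\alpha=1/2$ (i.e.\ $1/\alpha=2$) being the boundary case $\theta=-2\alpha$ of the binary Brownian CRT, recovered by continuity.

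Next I would pass to the sampled trees. Sampling $U_1,U_2,\ldots$ i.i.d.\ from $\mu$ and reducing gives $\cR_k$ with shape $T_k$, and by the recursive self-similarity the fringe subtrees above the first branch point of $\cT$, rescaled to unit mass, are independent copies of $(\cT,\mu)$. Conditionally on the ranked relative masses $(s_i)_{i\ge 1}$ of these subtrees, each sampled leaf falls into subtree $i$ with probability $s_i$ independently, so the partition of $[n]$ read off at the first branch point is a Kingman paintbox driven by $(s_i)$. This simultaneously yields the Markov branching property of $(T_n)$ and shows that, after conditioning on a non-trivial split ($k\ge 2$ blocks), the EPPF of the first split is exactly of the form (\ref{Kingman}) with $\nu$ equal to the dislocation measure of the mass fragmentation.

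It then remains to read off the closed form. Since ${\rm PD}^*_\alth$ is the dislocation measure associated with the Ewens--Pitman EPPF $p^{\rm PD^*}_\alth$ (\cite{Mie-03,HPW}), and the integral representation (\ref{Kingman}) is insensitive to scaling $\nu$ by a constant, the identification $\nu\propto{\rm PD}^*_{\alpha,-1}$ forces the splitting-rule EPPF to equal $p^{\rm PD^*}_{\alpha,-1}$. Evaluating (\ref{EPmod}) at $\theta=-1$, with $w_n=\Gamma(n-\alpha)/\Gamma(1-\alpha)$, $a_k=\alpha^{k-2}\Gamma(k-1/\alpha)/\Gamma(2-1/\alpha)$ and normalisation $Z_n=\Gamma(n-\alpha)/\Gamma(2-\alpha)$, produces precisely the stated constant $\alpha^{k-2}\Gamma(k-1/\alpha)\Gamma(2-\alpha)/(\Gamma(2-1/\alpha)\Gamma(n-\alpha))$, and hence $p^{\rm stable}_{1/\alpha}=p^{\rm PD^*}_{\alpha,-1}$.

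The main obstacle is the first step: verifying that the ranked relative masses at the first branch point are genuinely ${\rm PD}^*_{\alpha,-1}$-distributed. This is where the L\'evy/excursion coding of $\cT$ in \cite{DuL-02} enters — the masses hanging along the spine from the root are governed by the jumps of an $\alpha$-stable subordinator, which produces the Poisson--Dirichlet law — and pinning down the exponent $\theta=-1$ together with every Gamma-factor in the normalisation is the delicate part. Everything downstream is the paintbox bookkeeping behind (\ref{Kingman}) together with the moment representation $\int f\,{\rm PD}^*_\alth=\bE(\sigma_1^{-\theta}f(\Delta\sigma_{[0,1]}/\sigma_1))$ already available in the excerpt.
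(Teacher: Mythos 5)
This lemma is not proved in the paper at all: it is imported verbatim as Corollary~22 of \cite{HMPW}, so there is no in-paper argument to compare against. Your reconstruction follows what is essentially the standard (and, as far as one can tell, the original) route: (i) identify the stable tree's fragmentation by heights as self-similar of index $-(1-\alpha)$ with dislocation measure proportional to ${\rm PD}^*_{\alpha,-1}$ via Miermont's subordinator computation, (ii) invoke the general fact that sampling $n$ leaves from a fragmentation CRT yields a Markov branching tree whose first-split EPPF is the $\nu$-paintbox formula (\ref{Kingman}), and (iii) evaluate (\ref{EPmod}) at $\theta=-1$. The arithmetic in (iii) is right: $a_k=\alpha^{k-2}\Gamma(k-1/\alpha)/\Gamma(2-1/\alpha)$ and the normalisation $Z_n=\Gamma(n-\alpha)/\Gamma(2-\alpha)$ (checked directly for $n=2,3$, and consistent with the EPPF summing to one over partitions with at least two blocks), and the parameter range $-1\in[-2\alpha,-\alpha]$ for $\alpha\in[1/2,1]$ with the Brownian boundary at $\alpha=1/2$ is correctly located. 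The only honest caveat, which you flag yourself, is that steps (i) and (ii) are each substantial theorems in their own right (Miermont's identification of the stable dislocation measure, and the sampling/paintbox correspondence of \cite{HMPW}), so what you have is a correct assembly of the two cited ingredients rather than a self-contained proof --- which is entirely appropriate for a statement that the paper itself treats as a quotation.
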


We recognise $p^{\rm stable}_{1/\alpha}=p^{\rm PD^*}_{\alpha,-1}$ in (\ref{EPmod}), and by Proposition \ref{prop1}, we have $p^{\rm PD^*}_{\alpha,-1}=p^{\rm seq}_{\alpha,1-\alpha}$. This observation yields the following corollary:

\begin{coro} The alpha-gamma trees with $\gamma=1-\alpha$ are strongly sampling consistent and exchangeable.
\end{coro}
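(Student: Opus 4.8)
The plan is to reduce both assertions to the i.i.d.\ leaf-sampling representation of the stable CRT, so that exchangeability and strong sampling consistency become structural consequences rather than computations. First I would record that the hypothesis $\gamma=1-\alpha$, together with the standing constraint $0\le\gamma\le\alpha$, forces $1/2\le\alpha\le 1$ (the stable regime $1/\alpha\in(1,2]$ corresponding to $1/2\le\alpha<1$), and that in this case the weights in growth rule (i) degenerate: every leaf edge carries weight $1-\alpha$ and every inner edge carries weight $\gamma=1-\alpha$, so that \emph{all} edges receive the common weight $1-\alpha$, while a branch point of degree $k+1$ receives weight $(k-1)\alpha-\gamma=k\alpha-1$. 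Thus the alpha-gamma growth rules with $\gamma=1-\alpha$ are precisely the equal-edge-weight sequential construction of Marchal \cite{Mar-08} (equivalently Miermont \cite{Mie-03}), as anticipated in the introduction.

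Next I would confirm the match of splitting rules: combining the Lemma above (Corollary 22 in \cite{HMPW}) with Proposition \ref{prop1} gives $p^{\rm seq}_{\alpha,1-\alpha}=p^{\rm PD^*}_{\alpha,-1}=p^{\rm stable}_{1/\alpha}$, so that the delabelled trees $T_n^\circ$ coincide in law with the reduced trees of the $(1/\alpha)$-stable CRT $(\cT,\rho,\mu)$. I would then invoke \cite{Mar-08,Mie-03} at the \emph{labelled} level: the equal-edge-weight construction produces the finite-dimensional marginals $\cR_n$ of $\cT$, with leaves labelled in the order of an i.i.d.\ $\mu$-sample $U_1,U_2,\ldots$; that is, $T_n$ has the law of the discrete tree spanned by $\rho,U_1,\ldots,U_n$.

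From this representation both claims are formal. Exchangeability of $T_n$ is immediate, since the law of $(\cT,U_1,\ldots,U_n)$ is invariant under permutations of $U_1,\ldots,U_n$, hence so is that of $T_n$. For strong sampling consistency, deleting a uniformly chosen leaf of $T_n$ amounts to deleting a uniformly chosen one of the $n$ sample points; by exchangeability of the i.i.d.\ sample this has the same joint law with $T_n^\circ$ as deleting the last point $U_n$, which leaves exactly the tree $T_{n-1}^\circ$ spanned by $\rho,U_1,\ldots,U_{n-1}$. Hence $(T_{n,-1}^\circ,T_n^\circ)$ \ed $(T_{n-1}^\circ,T_n^\circ)$, which is strong sampling consistency (and a fortiori the sampling consistency of Proposition \ref{samplecons}).

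The step I expect to be the main obstacle, and the only genuinely non-formal point, is the labelled-level identification in the second paragraph: the agreement of splitting rules yields only equality of the \emph{unlabelled} laws, whereas exchangeability is a statement about the \emph{labelled} trees. Its substance is Marchal's observation that, because all edges carry equal weight, the product of the selected weights along the (unique) construction history of a labelled tree $\ft\in\bT_n$ depends only on the shape $\ft^\circ$; equivalently, the growth-order labelling is exchangeable and agrees with i.i.d.\ $\mu$-sampling. One could reprove this in place by induction on $n$, using that the total weight of any $m$-leaf alpha-gamma tree is always $m-\alpha$, so that the denominators in the history product depend on $n$ alone, and then checking that the product of chosen numerator weights is invariant under relabelling; but since this merely reproduces the computation of \cite{Mar-08,Mie-03}, I would cite it rather than redo it.
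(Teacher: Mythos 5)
Your proposal is correct and follows essentially the same route as the paper: both reduce the claim to the representation of the $\gamma=1-\alpha$ trees as subtrees spanned by the root and an i.i.d.\ $\mu$-sample $U_1,\ldots,U_n$ from the stable CRT, from which exchangeability is immediate and strong sampling consistency follows because removing a uniform leaf has the same joint law with $T_n^\circ$ as removing the last sample point. Your explicit flagging of the labelled-level identification (via Marchal's equal-edge-weight construction) as the one non-formal step is a fair and slightly more careful articulation of what the paper leaves implicit in citing the stable-tree literature.
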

\begin{proof} These properties follow from the representation by sampling from the stable CRT, particularly the exchangeability of 
  the sequence $U_i$, $i\ge 1$. Specifically, since $U_i$, $i\ge 1$, are conditionally independent
  and identically distributed given $(\cT,\mu)$, they are exchangeable. If we denote by   
  $\cL_{n,-1}$ the random set of leaves $\cL_n=\{U_1,\ldots,U_n\}$ with a uniformly chosen 
  member removed, then $(\cL_{n,-1},\cL_n)$ has the same conditional distribution as 
  $(\cL_{n-1},\cL_n)$. Hence the pairs of (unlabelled) tree shapes spanned by $\rho$ and these 
  sets of leaves have the same distribution -- this is strong sampling consistency as defined before Proposition \ref{prop13}. 
\end{proof}

\subsection{Dislocation measures in size-biased order} In actual
calculations, we may find that the splitting rules in Proposition
\ref{prop1} are quite difficult and the corresponding dislocation
measure $\nu$ is always inexplicit, which leads us to transform
$\nu$ to a more explicit form. The method proposed here is to change 
the space $\cS^\downarrow$ into the space $[0,1]^\mathbb{N}$ and to
rearrange the elements $s\in\cS^\downarrow$ under $\nu$ into the \em size-biased random order \em
that places $s_{i_1}$ first with probability $s_{i_1}$ (its \em size\em) and, successively, the remaining ones with probabilities $s_{i_j}/(1-s_{i_1}-\ldots-s_{i_{j-1}})$ proportional to 
their sizes $s_{i_j}$ into the following positions, $j\ge 2$.

\begin{defi}\rm We call a measure $\nu^{\rm sb}$ on the space $[0,1]^\mathbb{N}$ the
size-biased dislocation measure associated with dislocation measure
$\nu$, if for any subset $A_1\times A_2\times\ldots\times A_k\times
[0,1]^\mathbb{N}$ of $[0,1]^\mathbb{N}$,
\begin{equation}
\nu^{\rm sb}(A_1\times A_2\times\ldots\times A_k\times
[0,1]^\mathbb{N})=\sum_{\substack{i_1,\ldots,i_k\ge 1\\
   {\rm distinct}}}\int_{\{s\in
\cS^\downarrow:s_{i_1}\in A_1,\ldots,s_{i_k}\in
A_k\}}\frac{s_{i_1}\ldots s_{i_k}}{\prod_{j=1}^{k-1}(1-\sum_{l=1}^j
  s_{i_l})} \nu(ds)\label{sizebias1}
\end{equation}
for any $k\in\bN$, where $\nu$ is a dislocation measure on $\cS^\downarrow$ satisfying
$\nu(s\in\cS^\downarrow:s_1+s_2+\ldots<1)=0$. We also denote by $\nu_k^{\rm sb}(A_1\times A_2\times\ldots\times A_k)=\nu^{\rm sb}(A_1\times
A_2\times\ldots\times A_k\times [0,1]^\mathbb{N})$  the distribution of the 
first $k$ marginals.
\end{defi}
The sum in (\ref{sizebias1}) is over all possible rank sequences $(i_1,\ldots,i_k)$ to determine the first $k$ 
entries of the size-biased vector. The integral in (\ref{sizebias1}) is over the decreasing sequences that have the $j$th entry of the re-ordered vector fall into $A_j$, $j\in[k]$. 
Notice that the support of such a size-biased dislocation measure
$\nu^{\rm sb}$ is a subset of $\cS^{\rm sb}:=\{ s\in
[0,1]^\mathbb{N}: \sum_{i=1}^\infty s_i=1 \}$. If we
denote by $s^\downarrow$ the sequence $s\in\cS^{\rm sb}$ rearranged into ranked order,
taking (\ref{sizebias1}) into formula (\ref{Kingman}), we obtain

\begin{prop}\label{sizebias} The EPPF associated with a dislocation measure $\nu$ can be represented as:
$$
  p(n_1,\ldots,n_k)=\frac{1}{\widetilde{Z}_n}\int_{[0,1]^k}x_1^{n_1-1}\ldots x_k^{n_k-1}\prod_{j=1}^{k-1}(1-\sum_{l=1}^jx_l)
  \nu_k^{\rm sb}(dx),$$
 where $
   \nu^{\rm sb}$
 is the size-biased dislocation measure associated with $\nu$, where $n_1\geq\ldots\geq n_k\geq 1, k\geq 2, n=n_1+\ldots+n_k$ and
 $x=(x_1,\ldots,x_k)$.
\end{prop}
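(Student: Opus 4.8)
The plan is to start from the integral representation (\ref{Kingman}) and to recognise the right-hand side of the proposition as a reweighting of its summands. The guiding observation is that the definition (\ref{sizebias1}) of $\nu^{\rm sb}$, once promoted from product sets to an identity for integrals of nonnegative measurable functions $g$ on $[0,1]^k$, reads
\[
  \int_{[0,1]^k}g(x)\,\nu_k^{\rm sb}(dx)=\sum_{\substack{i_1,\ldots,i_k\ge 1\\{\rm distinct}}}\int_{\cS^\downarrow}g(s_{i_1},\ldots,s_{i_k})\,\frac{s_{i_1}\ldots s_{i_k}}{\prod_{j=1}^{k-1}(1-\sum_{l=1}^j s_{i_l})}\,\nu(ds).
\]
First I would establish this functional form, and then apply it to the specific integrand $g(x)=x_1^{n_1-1}\ldots x_k^{n_k-1}\prod_{j=1}^{k-1}(1-\sum_{l=1}^j x_l)$ appearing in the statement.

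The heart of the argument is a telescoping cancellation. With the above $g$, the contribution of a fixed choice of distinct indices is
\[
  s_{i_1}^{n_1-1}\ldots s_{i_k}^{n_k-1}\Bigl(\prod_{j=1}^{k-1}\bigl(1-\sum_{l=1}^j s_{i_l}\bigr)\Bigr)\cdot\frac{s_{i_1}\ldots s_{i_k}}{\prod_{j=1}^{k-1}(1-\sum_{l=1}^j s_{i_l})}=\prod_{j=1}^k s_{i_j}^{n_j},
\]
since the two products over $1-\sum_{l=1}^j s_{i_l}$ cancel and each extra factor $s_{i_j}$ raises the exponent from $n_j-1$ to $n_j$. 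Summing over distinct index tuples and integrating against $\nu$, the right-hand side of the proposition becomes exactly $\widetilde{Z}_n^{-1}\int_{\cS^\downarrow}\sum_{i_1,\ldots,i_k\ {\rm distinct}}\prod_{j=1}^k s_{i_j}^{n_j}\,\nu(ds)$, which is $p(n_1,\ldots,n_k)$ by (\ref{Kingman}). This completes the identification.

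The step requiring care is the passage from the product-set definition (\ref{sizebias1}) to the functional identity displayed above. I would obtain it by the usual monotone-class scheme: it holds for indicators of measurable rectangles $A_1\times\ldots\times A_k$ by definition, extends by linearity to simple functions, and then to all nonnegative measurable $g$ by monotone convergence; since all integrands are nonnegative, Tonelli's theorem justifies interchanging the countable sum over index tuples with the $\nu$-integral, even though $\nu$ is only $\sigma$-finite. The only apparent obstacle is the denominator $\prod_{j=1}^{k-1}(1-\sum_{l=1}^j s_{i_l})$, which could vanish when a partial sum reaches $1$; but $\nu$ charges only $\{s:\sum_i s_i=1\}$, so for $\nu$-almost every $s$ a partial sum over finitely many distinct entries equals $1$ only when the remaining chosen entries vanish, and there the combined integrand $\prod_{j=1}^k s_{i_j}^{n_j}$ already vanishes. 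Thus the cancellation above is valid $\nu$-almost everywhere and no genuine singularity arises.
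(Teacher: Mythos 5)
Your proposal is correct and follows essentially the same route as the paper, which states the result with no written proof beyond the remark that substituting the definition (\ref{sizebias1}) into the integral representation (\ref{Kingman}) yields the identity — precisely the telescoping cancellation you carry out. Your additional care with the monotone-class extension, Tonelli, and the $0/0$ convention on $\{\sum_{l\le j}s_{i_l}=1\}$ fills in details the paper leaves implicit.
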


Now turn to see the case of Poisson-Dirichlet measures ${\rm PD}^*_{\alpha,\theta}$ to then study $\nu_{\alpha,\gamma}^{\rm sb}$.

\begin{lemm}\label{PDSB}
If we define ${\rm GEM}^*_{\alpha,\theta}$ as the size-biased
dislocation measure associated with ${\rm PD}_{\alpha,\theta}^*$ for
$0<\alpha<1$ and $\theta>-2\alpha$, then the first $k$ marginals have joint density
\begin{equation}\label{GEM}
{\rm gem}_{\alpha,\theta}^*(x_1,\ldots,x_k)=\frac{\alpha\Gamma(2+\theta/\alpha)}{\Gamma(1-\alpha)\Gamma(\theta+\alpha+1)\prod_{j=2}^kB(1-\alpha,\theta+j\alpha)}\frac{(1-\sum_{i=1}^kx_i)^{\theta+k\alpha}\prod_{j=1}^k
x_j^{-\alpha}}{\prod_{j=1}^k(1-\sum_{i=1}^j x_i)},
\end{equation}
where $B(a,b)=\int_0^1x^{a-1}(1-x)^{b-1}dx$
is the beta function.
\end{lemm}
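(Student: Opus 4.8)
The plan is to read off the \emph{shape} of the density from the residual-allocation (stick-breaking) representation of Lemma~\ref{crp1}, and then to fix the constant and extend to the full range $\theta>-2\alpha$ by checking, through Proposition~\ref{sizebias}, that the candidate reproduces the Ewens--Pitman EPPF~(\ref{EPmod}). In the probability regime $\theta>-\alpha$, ${\rm PD}^*_{\alpha,\theta}$ is a constant multiple of ${\rm PD}_{\alpha,\theta}$, so its size-biased rearrangement is ${\rm GEM}_{\alpha,\theta}$, whose first $k$ coordinates are $x_j=\overline{W}_1\cdots\overline{W}_{j-1}W_j$ with independent $W_j\sim{\rm beta}(1-\alpha,\theta+j\alpha)$. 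Setting $R_0=1$ and $R_j=1-\sum_{i\le j}x_i=\overline{W}_1\cdots\overline{W}_j$, the map $(W_1,\ldots,W_k)\mapsto(x_1,\ldots,x_k)$ is lower-triangular with Jacobian $\prod_{j=1}^kR_{j-1}^{-1}$; substituting $W_j=x_j/R_{j-1}$ and $\overline{W}_j=R_j/R_{j-1}$ into the product of beta densities and telescoping the powers of the $R_j$ yields the shape $(1-\sum_ix_i)^{\theta+k\alpha-1}\prod_{j=1}^{k-1}(1-\sum_{i\le j}x_i)^{-1}\prod_{j=1}^kx_j^{-\alpha}$ multiplied by $\prod_{j=1}^kB(1-\alpha,\theta+j\alpha)^{-1}$. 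Because size-biasing is linear, ${\rm GEM}^*_{\alpha,\theta}$ is this density scaled by the total mass of ${\rm PD}^*_{\alpha,\theta}$, which the subordinator representation and the Mittag--Leffler moments of Lemma~\ref{crp2} give as $\bE(\sigma_1^{-\theta})=\Gamma(1+\theta/\alpha)/\Gamma(1+\theta)$; absorbing the $j=1$ beta factor into this constant reproduces (\ref{GEM}) for $\theta>-\alpha$.

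To reach all $\theta>-2\alpha$ --- where ${\rm GEM}^*_{\alpha,\theta}$ is an infinite measure and the probabilistic argument is unavailable --- I would verify (\ref{GEM}) directly. Denote by $g_k$ the right-hand side of (\ref{GEM}), with constant $C_k$, and test it as in Proposition~\ref{sizebias} against $\prod_jx_j^{n_j-1}\prod_{j<k}(1-\sum_{l\le j}x_l)$: the weight cancels all but the last factor in the denominator of $g_k$, so the integral collapses to the single Liouville--Dirichlet integral $C_k\Gamma(\theta+k\alpha)\prod_j\Gamma(n_j-\alpha)/\Gamma(n+\theta)$, finite for every $\theta>-2\alpha$ once $k\ge2$. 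A short computation with Gamma functions, using $\widetilde{Z}_n=Z_n\alpha\Gamma(2+\theta/\alpha)/\Gamma(n+\theta)$ from \cite[Formula~(17)]{HPW}, shows this equals $\widetilde{Z}_n\,p^{\rm PD^*}_{\alpha,\theta}(n_1,\ldots,n_k)$; that is, $g_k$ reproduces the EPPF through Proposition~\ref{sizebias}. By the same proposition the true marginal $\nu_k^{\rm sb}$ has the identical weighted moment for every multi-index $(n_1,\ldots,n_k)$ (the symmetric sum $\sum_{\rm distinct}\prod s_{i_j}^{n_j}$ being insensitive to the ordering). Since these moments determine the finite measure $\prod_{j<k}(1-\sum_{l\le j}x_l)\,\nu_k^{\rm sb}(dx)$ on the compact simplex $\{\sum_ix_i\le1\}$ by Stone--Weierstrass, and ${\rm PD}^*_{\alpha,\theta}$ charges only the open simplex, where the weight is strictly positive, we conclude $\nu_k^{\rm sb}(dx)=g_k(x)\,dx$ for $k\ge2$. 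The marginal $\nu_1^{\rm sb}$ then follows by integrating the $k=2$ identity over $x_2$, a single beta integral.

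The main obstacle is exactly the infinite-measure range $-2\alpha<\theta\le-\alpha$ relevant to Proposition~\ref{prop4}, where one must argue entirely through moments rather than through a probability density. The delicate points are (i) confirming that the weighted moments are finite and genuinely determine $\nu_k^{\rm sb}$, with no hidden mass on the boundary $\{\sum_{l\le j}x_l=1\}$ on which the test weight vanishes, and (ii) the Gamma-function bookkeeping that matches the normalising constant with the stated product $\prod_{j=2}^kB(1-\alpha,\theta+j\alpha)$ --- in particular the cancellation turning the value $\alpha^{k-1}\Gamma(k+\theta/\alpha)/(\Gamma(1-\alpha)^k\Gamma(\theta+k\alpha))$ forced by the moment identity into that form.
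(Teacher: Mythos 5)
Your proposal is correct. Its first paragraph is in fact the entire content of the paper's proof, which consists of the single remark that (\ref{GEM}) ``can be derived analogously to Lemma \ref{crp1}'' as ``a simple $\sigma$-finite extension'' of ${\rm GEM}_{\alpha,\theta}$: the intended derivation is precisely your stick-breaking change of variables, with the total mass $\bE(\sigma_1^{-\theta})=\Gamma(1+\theta/\alpha)/\Gamma(1+\theta)$ of ${\rm PD}^*_{\alpha,\theta}$ supplying the extra constant, and your observation that this constant times $B(1-\alpha,\theta+\alpha)^{-1}$ equals $\alpha\Gamma(2+\theta/\alpha)/\Gamma(1-\alpha)\Gamma(\theta+\alpha+1)$ is exactly the rewriting that keeps the formula meaningful down to $\theta>-2\alpha$. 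Where you genuinely go beyond the paper is the second paragraph: the paper offers no argument at all for the infinite-measure range $-2\alpha<\theta\le-\alpha$, which is the range actually needed for Proposition \ref{prop4}, whereas you verify the candidate density against the EPPF through Proposition \ref{sizebias} and then invoke determinacy of the finite weighted measure $\prod_{j<k}(1-\sum_{l\le j}x_l)\,\nu_k^{\rm sb}(dx)$ on the compact simplex. The points you flag as delicate do go through: the collapsed Dirichlet integral is finite for $k\ge 2$ since $\theta+k\alpha>0$; the weighted moments are symmetric in the exponents, so the full multi-index family is available; ${\rm PD}^*_{\alpha,\theta}$ charges only sequences with infinitely many positive parts, so no mass hides where the test weight vanishes; and the Gamma-function identity $C_k=\alpha^{k-1}\Gamma(k+\theta/\alpha)/\Gamma(1-\alpha)^k\Gamma(\theta+k\alpha)$ does match the stated product of beta functions. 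The cost of your route is the extra moment-determinacy machinery; what it buys is an actual proof in the $\sigma$-finite regime rather than an assertion.
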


This is a simple $\sigma$-finite extension of the {\rm GEM} distribution and
(\ref{GEM}) can be derived analogously to Lemma \ref{crp1}. Applying
Proposition \ref{sizebias}, we can get an explicit form of the
size-biased dislocation measure associated with the alpha-gamma
model.	

\begin{proof}[Proof of Proposition \ref{prop4}] We start our proof
from the dislocation measure associated with the alpha-gamma model.
According to (\ref{thm2nu}) and (\ref{sizebias1}), the first $k$ marginals of
$\nu_{\alpha,\gamma}^{\rm sb}$ are given by
\begin{eqnarray*}
&&\hspace{-0.5cm}\nu_k^{\rm sb}(A_1\times\ldots\times A_k)\\
&&=\sum_{\substack{i_1,\ldots,i_k\ge 1\\
   {\rm distinct}}}\int_{\{s\in
\cS^\downarrow:s_{i_j}\in A_j,j\in[k]\}}\frac{s_{i_1}\ldots s_{i_k}}{\prod_{j=1}^{k-1}(1-\sum_{l=1}^j
  s_{i_l})}% \nonumber\\
%  &&\times
\left (\gamma+(1-\alpha-\gamma)\sum_{i\neq j}s_i s_j \right){\rm
PD}^*_{\alpha,-\alpha-\gamma}(ds)\nonumber\\
 &&=\gamma
D+(1-\alpha-\gamma)(E-F),\nonumber
\end{eqnarray*}
where
\begin{eqnarray}
D&=&\sum_{\substack{i_1,\ldots,i_k\ge 1\\
   {\rm distinct}}}\int_{\{s\in
\cS^\downarrow:\ s_{i_1}\in A_1,\ldots,s_{i_k}\in
A_k\}}\frac{s_{i_1}\ldots s_{i_k}}{\prod_{j=1}^{k-1}(1-\sum_{l=1}^j
  s_{i_l})}{\rm
PD}^*_{\alpha,-\alpha-\gamma}(ds)\nonumber\\
&=&{\rm GEM}^*_{\alpha,-\alpha-\gamma}(A_1\times\ldots\times A_k),\nonumber\\
 E&=&\sum_{\substack{i_1,\ldots,i_k\ge 1\\
   {\rm distinct}}}\int_{\{s\in
\cS^\downarrow:\ s_{i_1}\in A_1,\ldots,s_{i_k}\in
A_k\}}\left(1-\sum_{u=1}^k
s_{i_u}^2\right)\frac{s_{i_1}\ldots s_{i_k}}{\prod_{j=1}^{k-1}(1-\sum_{l=1}^j
  s_{i_l})}{\rm
PD}^*_{\alpha,-\alpha-\gamma}(ds)\nonumber\\
&=&\int_{A_1\times\ldots\times A_k}\left(1-\sum_{i=1}^k x_i^2\right ){\rm
GEM}^*_{\alpha,-\alpha-\gamma}(dx)\nonumber\\
%\end{eqnarray}
%and
%\begin{eqnarray}
F&=&\sum_{\substack{i_1,\ldots,i_k\ge 1\\
   {\rm distinct}}}\int_{\{s\in
\cS^\downarrow:\ s_{i_1}\in A_1,\ldots,s_{i_k}\in
A_k\}}\left(\sum_{v\not\in\{i_1,\ldots,i_k\}}
s_v^2\right)\frac{s_{i_1}\ldots s_{i_k}}{\prod_{j=1}^{k-1}(1-\sum_{l=1}^j
  s_{i_l})}{\rm
PD}^*_{\alpha,-\alpha-\gamma}(ds)\nonumber\\
&=&\sum_{\substack{i_1,\ldots,i_{k+1}\ge 1\\
   {\rm distinct}}}\int_{\{s\in
\cS^\downarrow:\ s_{i_1}\in A_1,\ldots,s_{i_k}\in A_k\}}\frac{s_{i_{k+1}}^2}{1-\sum_{l=1}^k
s_{i_l}}%\nonumber\\
%&&\times
\frac{s_{i_1}\ldots s_{i_{k+1}}}{\prod_{j=1}^k(1-\sum_{l=1}^j
  s_{i_l})}{\rm
PD}^*_{\alpha,-\alpha-\gamma}(ds)\nonumber\\
&=&\int_{A_1\times\ldots\times A_k\times[0,1]}\frac{x_{k+1}}{1-\sum_{i=1}^k x_i}{\rm
GEM}^*_{\alpha,-\alpha-\gamma}(d(x_1,\ldots ,x_{k+1})).\nonumber
\end{eqnarray}
Applying (\ref{GEM}) to $F$ (and setting $\theta=-\alpha-\gamma$),
then integrating out $x_{k+1}$, we get:
$$F=\int_{A_1\times\ldots\times A_k}\frac{1-\alpha}{1+(k-1)\alpha-\gamma}\left(1-\sum_{i=1}^k
x_i\right)^2{\rm GEM}^*_{\alpha,-\alpha-\gamma}(dx).$$ Summing over $D,
E, F$, we obtain the formula stated in Proposition \ref{prop4}.
\end{proof}

As the model related to stable trees is a special case of the
alpha-gamma model when $\gamma=1-\alpha$, the sized-biased
dislocation measure for it is
$$\nu^{\rm sb}_{\alpha,1-\alpha}(ds)=\gamma{\rm GEM}^*_{\alpha,-1}(ds).$$

For general $(\alpha,\gamma)$, the explicit form of the dislocation 
measure in size-biased order, specifically the density $g_{\alpha,\gamma}$ of the first marginal of $\nu^{\rm sb}_{\alpha,\gamma}$,
yields immediately the tagged particle \cite{Ber-hom} L\'{e}vy measure associated with a fragmentation process with alpha-gamma 
dislocation measure. 

\begin{coro}\label{cortps} Let $(\Pi^{\alpha,\gamma}(t),t\geq 0)$ be an exchangeable homogeneous 
$\cP_{\bN}$-valued fragmentation process with dislocation measure $\nu_{\alpha,\gamma}$. Then, for the
size $|\Pi_{(i)}^{\alpha,\gamma}(t)|$ of the block containing $i\ge 1$, the process
$\xi_{(i)}(t)=-\log|\Pi_{(i)}^{\alpha,\gamma}(t)|$, $t\geq 0$, is a pure-jump
subordinator with L\'{e}vy measure 
\begin{eqnarray*}
   \Lambda_{\alpha,\gamma}(dx)\;=\;e^{-x}g_{\alpha,\gamma}(e^{-x})dx\!\!&=&\!\!\frac{\alpha\Gamma(1-\gamma/\alpha)}{\Gamma(1-\alpha)\Gamma(1-\gamma)}\left(1-e^{-x}\right)^{-1-\gamma}\left(e^{-x}\right)^{1-\alpha}\\
              &&\times \left(\gamma+(1-\alpha-\gamma)\left(2e^{-x}(1-e^{-x})+\frac{\alpha-\gamma}{1-\gamma}(1-e^{-x})^2\right)\right)dx.
%{\rm e}^{-x}\nu^{\rm sb}_{\alpha,-\alpha-\gamma}(\{s\in\cS^{\rm sb}: -\log s_1\in dx\})\nonumber\\
%              &=&\left(\gamma+(1-\alpha-\gamma)\left(1-{\rm e}^{-2x}-\frac{1-\alpha}{1+(k-1)\alpha-\gamma}(1-{\rm
%              e}^{-x})^2\right)\right)\nonumber\\
%              &&\times{\rm
%              GEM}^*_{\alpha,-\alpha-\gamma}(d({\rm e}^{-x}))
%              .\nonumber
\end{eqnarray*}
\end{coro}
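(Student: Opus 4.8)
The plan is to combine two facts. The first is the standard tagged-fragment description of a homogeneous fragmentation (Bertoin \cite{Ber-hom,Ber-book}): it identifies $\xi_{(i)}$ as a subordinator and expresses its L\'evy measure as the image under $s\mapsto-\log s$ of the first size-biased marginal of $\nu_{\alpha,\gamma}$. The second is the explicit density of that marginal, which is the $k=1$ case of Proposition \ref{prop4} together with Lemma \ref{PDSB}. As $i\ge 1$ is arbitrary and the process is exchangeable with no erosion ($c=0$) and dislocation measure concentrated on $\{\sum_i s_i=1\}$, it suffices to treat $i=1$.

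First I would recall that for the characteristic pair $(0,\nu_{\alpha,\gamma})$ the tagged block size is such that $\xi_{(1)}(t)=-\log|\Pi^{\alpha,\gamma}_{(1)}(t)|$ is a subordinator with Laplace exponent
$$\phi(q)=\int_{\cS^\downarrow}\Big(1-\sum_{i\ge 1}s_i^{q+1}\Big)\nu_{\alpha,\gamma}(ds);$$
the absence of a drift term (erosion $c=0$) makes it pure-jump. Using $\sum_i s_i=1$ $\nu_{\alpha,\gamma}$-a.e., I rewrite $1-\sum_i s_i^{q+1}=\sum_i s_i(1-s_i^q)$ and change variables $s_i=e^{-x}$, so that
$$\phi(q)=\int_0^\infty(1-e^{-qx})\,\Lambda_{\alpha,\gamma}(dx),\qquad \Lambda_{\alpha,\gamma}(dx)=\int_{\cS^\downarrow}\sum_{i\ge 1}s_i\,\delta_{-\log s_i}(dx)\,\nu_{\alpha,\gamma}(ds).$$
The inner measure is precisely the image under $s\mapsto-\log s$ of $\nu^{\rm sb}_1$, the first marginal of $\nu^{\rm sb}_{\alpha,\gamma}$, because the size-biasing weight $s_i$ is exactly the weight built into $\nu^{\rm sb}_1$. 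Writing $g_{\alpha,\gamma}$ for the density of $\nu^{\rm sb}_1$ and accounting for the Jacobian of $s=e^{-x}$ gives $\Lambda_{\alpha,\gamma}(dx)=e^{-x}g_{\alpha,\gamma}(e^{-x})\,dx$.

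It then remains to make $g_{\alpha,\gamma}$ explicit and match the stated form. Taking $k=1$ in Lemma \ref{PDSB} with $\theta=-\alpha-\gamma$, so that $2+\theta/\alpha=1-\gamma/\alpha$, $\theta+\alpha+1=1-\gamma$ and $\theta+\alpha=-\gamma$, yields
$${\rm gem}^*_{\alpha,-\alpha-\gamma}(s)=\frac{\alpha\Gamma(1-\gamma/\alpha)}{\Gamma(1-\alpha)\Gamma(1-\gamma)}(1-s)^{-1-\gamma}s^{-\alpha},$$
and Proposition \ref{prop4} with $k=1$ gives $g_{\alpha,\gamma}(s)=\big(\gamma+(1-\alpha-\gamma)(1-s^2-\tfrac{1-\alpha}{1-\gamma}(1-s)^2)\big)\,{\rm gem}^*_{\alpha,-\alpha-\gamma}(s)$. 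Substituting $s=e^{-x}$ and inserting the Jacobian reproduces the prefactor and the factors $(1-e^{-x})^{-1-\gamma}(e^{-x})^{1-\alpha}$, and the last step is the elementary identity
$$1-s^2-\frac{1-\alpha}{1-\gamma}(1-s)^2=2s(1-s)+\frac{\alpha-\gamma}{1-\gamma}(1-s)^2,$$
which follows from $1-s^2=2s(1-s)+(1-s)^2$ and $\tfrac{1-\alpha}{1-\gamma}+\tfrac{\alpha-\gamma}{1-\gamma}=1$.

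I do not expect a genuine obstacle: the tagged-fragment subordinator and the formula for its L\'evy measure are quoted from the literature, and what remains is the $k=1$ specialisation of results already established plus a one-line polynomial identity. The only points needing care are bookkeeping in the change of variables — making sure the size-biasing weight in $\nu^{\rm sb}_1$ and the Jacobian of $s=e^{-x}$ are each counted exactly once — and the Gamma-function simplifications when setting $\theta=-\alpha-\gamma$.
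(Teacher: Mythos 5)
Your proposal is correct and follows exactly the route the paper intends: the paper derives this corollary "immediately" from the density of the first marginal of $\nu^{\rm sb}_{\alpha,\gamma}$ (the $k=1$ case of Proposition \ref{prop4} via Lemma \ref{PDSB}), combined with the standard identification $\Lambda(dx)=e^{-x}g_{\alpha,\gamma}(e^{-x})\,dx$ for the tagged-fragment subordinator. Your specialisations ($2+\theta/\alpha=1-\gamma/\alpha$, $\theta+\alpha+1=1-\gamma$) and the polynomial identity $1-s^2-\tfrac{1-\alpha}{1-\gamma}(1-s)^2=2s(1-s)+\tfrac{\alpha-\gamma}{1-\gamma}(1-s)^2$ all check out.
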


\subsection{Convergence of alpha-gamma trees to self-similar CRTs}\label{sechmpw}
In this subsection, we will prove that the delabelled alpha-gamma
trees $T_n^\circ$, represented as $\bR$-trees with unit edge lengths and suitably rescaled 
converge to CRTs as $n$ tends to infinity.

\begin{lemm}
If $(\widetilde{T}_n^\circ)_{n\geq 1}$ are strongly sampling consistent discrete
fragmentation trees associated with dislocation measure
$\nu_{\alpha,-\alpha-\gamma}$, then
$$
    \frac{\widetilde{T}_n^\circ}{n^\gamma}\rightarrow\mathcal{T}^{\alpha,\gamma}$$
in the Gromov-Hausdorff sense, in probability as $n\rightarrow
\infty$.
\end{lemm}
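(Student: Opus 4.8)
The plan is to deduce this directly from the general scaling-limit theorem for sampling consistent Markov branching trees of \cite{HMPW}, whose framework was recalled in Section \ref{seccrts}. That theorem says that a strongly sampling consistent family with erosion coefficient $c=0$, no dust ($\nu(\{s\in\cS^\downarrow:s_1+s_2+\ldots<1\})=0$) and dislocation measure $\nu$ converges, after rescaling all edge lengths by $n^{-\gamma}$, to the Haas--Miermont self-similar CRT $\cT_{(\gamma,\nu)}$ of \cite{HM} in the Gromov--Hausdorff sense, provided two hypotheses hold: the integrability condition $\int_{\cS^\downarrow}(1-s_1)\nu(ds)<\infty$, and a regular variation condition with index $\gamma\in(0,1)$ on the normalising sequence $\widetilde{Z}_n=\int_{\cS^\downarrow}(1-\sum_{i\ge1}s_i^n)\nu(ds)$ that pins down the self-similarity index. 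I would take $\nu=\nu_{\alpha,\gamma}$ as identified in Theorem \ref{thm2}, so that the limit is $\cT_{(\gamma,\nu_{\alpha,\gamma})}=:\cT^{\alpha,\gamma}$, and apply the theorem to the strongly sampling consistent trees $\widetilde T_n^\circ$.

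The first and principal step is to verify the regular variation of $\widetilde Z_n$ with the correct index. For $\nu=\nu_{\alpha,\gamma}$ the normalising constant in the representation (\ref{Kingman}) is exactly the constant $Y_n$ computed in the proof of Theorem \ref{thm2}, namely
$$Y_n=\frac{n(n-1)\,\Gamma_\alpha(n)\,\alpha\,\Gamma(1-\gamma/\alpha)}{\Gamma(n+2-\alpha-\gamma)},\qquad \Gamma_\alpha(n)=\frac{\Gamma(n-\alpha)}{\Gamma(1-\alpha)}.$$
Using the Gamma-ratio asymptotics $\Gamma(n+a)/\Gamma(n+b)\sim n^{a-b}$ one finds $\Gamma(n-\alpha)/\Gamma(n+2-\alpha-\gamma)\sim n^{\gamma-2}$, whence $Y_n\sim C\,n^{\gamma}$ with $C=\alpha\Gamma(1-\gamma/\alpha)/\Gamma(1-\alpha)$. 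This regular variation with index $\gamma\in(0,1)$ is precisely what fixes the self-similarity index of the limit and matches the scaling $n^{\gamma}$ in the statement.

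The second step is the integrability hypothesis. Since the prefactor $\gamma+(1-\alpha-\gamma)\sum_{i\neq j}s_is_j$ in (\ref{thm2nu}) is bounded on $\cS^\downarrow$ (as $\sum_{i\neq j}s_is_j\le1$), the finiteness $\int_{\cS^\downarrow}(1-s_1)\nu_{\alpha,\gamma}(ds)<\infty$ follows from the corresponding property of ${\rm PD}^*_{\alpha,-\alpha-\gamma}$ recorded in the introduction for the range $-2\alpha\le\theta\le-\alpha$, i.e.\ $0\le\gamma\le\alpha$; the no-dust property is likewise inherited from ${\rm PD}^*_{\alpha,-\alpha-\gamma}$, and $c=0$. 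With these hypotheses met, the theorem of \cite{HMPW} applies to $\widetilde T_n^\circ$ and yields convergence in probability, with limit identified as $\cT^{\alpha,\gamma}$.

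The main obstacle is not the index computation, which is routine, but aligning our normalisation with the precise regular variation and tail hypotheses used in \cite{HMPW}: one must confirm that the quantity whose regular variation they require really is (a fixed multiple of) $Y_n=\widetilde Z_n$, and that any auxiliary control they impose on the production of many small fragments -- ensuring that height does not accumulate on a negligible scale -- is supplied by the explicit Poisson--Dirichlet structure of $\nu_{\alpha,\gamma}$. Once this bookkeeping is matched to their framework, the convergence in probability and the identification of the limit follow immediately.
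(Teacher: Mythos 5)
Your overall strategy coincides with the paper's: both invoke the scaling-limit theorem for strongly sampling consistent Markov branching trees (Theorem 2 of \cite{HMPW}) with $\nu=\nu_{\alpha,\gamma}$. However, you verify the wrong hypotheses, and this leaves a genuine gap. The two conditions that theorem actually requires are (i) regular variation with index $-\gamma_\nu$ of $\varepsilon\mapsto\nu(s_1\le 1-\varepsilon)$ as $\varepsilon\downarrow 0$, and (ii) the logarithmic moment condition $\int_{\cS^\downarrow}\sum_{i\ge 2}s_i|\ln s_i|^\rho\,\nu(ds)<\infty$ for some $\rho>0$. In place of (i) you check regular variation of the normalising sequence $\widetilde Z_n\sim Cn^\gamma$; these are Tauberian-equivalent, but the conversion is itself a step you do not carry out (the paper sidesteps it by using the equivalence, recorded as formula (19) of \cite{HMPW}, between (i) and the small-$x$ behaviour of the tagged-particle L\'evy measure $\Lambda_{\alpha,\gamma}$, which is available explicitly from Corollary \ref{cortps}). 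More seriously, in place of (ii) you substitute $\int_{\cS^\downarrow}(1-s_1)\nu(ds)<\infty$, which is merely the defining property of a dislocation measure and carries no content for the Gromov--Hausdorff limit; the genuine hypothesis (ii) is what prevents height from accumulating through the production of many small fragments, and you only gesture at it as ``auxiliary control'' without verifying it.

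Both missing verifications are repairable with material already in the paper: from the explicit density in Corollary \ref{cortps} one reads off $\Lambda_{\alpha,\gamma}([x,\infty))\sim c\,x^{-\gamma}$ as $x\downarrow 0$ (giving (i) with index $\gamma$ and the constant $\ell$), and (ii) follows from the bound $\int_{\cS^\downarrow}\sum_{i\ge2}s_i|\ln s_i|^\rho\nu(ds)\le\int_0^\infty x^\rho\Lambda_{\alpha,\gamma}(dx)$ together with the exponential decay of $\Lambda_{\alpha,\gamma}$ at infinity and its $x^{-1-\gamma}$ behaviour at zero (choose $\rho>\gamma$). As written, though, your argument does not establish the hypotheses under which the cited theorem applies.
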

\begin{proof} Theorem 2 in
\cite{HMPW} says that a strongly sampling consistent family of discrete
fragmentation trees $(\widetilde{T}_n^\circ)_{n\geq1}$ converges in probability to a CRT
$$\frac{\widetilde{T}_n^\circ}{n^{\gamma_{\nu}}\ell(n)\Gamma(1-\gamma_{\nu})}\rightarrow
\mathcal{T}_{(\gamma_{\nu},\nu)}$$ for the Gromov-Hausdorff metric
%\cite{EPW} 
if the dislocation measure $\nu$ satisfies
following two conditions:
\begin{equation}
          \nu(s_1\leq1-\varepsilon)=\varepsilon^{-\gamma_\nu}\ell(1/\varepsilon);\label{v1}
       \end{equation}
\begin{equation}
          \int_{\cS^\downarrow}\sum_{i\geq2}s_i|\rm ln\it s_i|^\rho\nu(ds)<\infty,\label{v2}
        \end{equation}
where $\rho$ is some positive real number, $\gamma_{\nu}\in(0,1)$,
and $x\mapsto\ell(x)$ is slowly varying as $x\rightarrow\infty$.

 By virtue of (19) in \cite{HMPW}, we know that (\ref{v1}) is
equivalent to $$
     \Lambda([x,\infty))= x^{-\gamma_\nu}\ell(1/x),\qquad\mbox{as $x\downarrow0$,}$$ 
where $\Lambda$ is the L\'{e}vy measure of the tagged particle subordinator as in Corollary \ref{cortps}.
So, the dislocation measure $\nu_{\alpha,\gamma}$ satisfies
(\ref{v1}) with $\ell(x)\rightarrow \gamma\alpha
\Gamma(1-\gamma/\alpha)/\Gamma(1-\alpha)\Gamma(2-\gamma)$ and
$\gamma_{\nu_{\alpha,\gamma}}=\gamma$. Notice that
$$\int_{\cS^\downarrow}\sum_{i\geq2}s_i|{\rm ln} s_i|^\rho\nu_{\alpha,\gamma}(ds)\leq\int_0^\infty
x^\rho\Lambda_{\alpha,\gamma}(dx).$$ As
$x\rightarrow\infty$, $\Lambda_{\alpha,\gamma}$ decays
exponentially, so $\nu_{\alpha,\gamma}$ satisfies condition
(\ref{v2}). This completes the proof.
%  Hence we have the corresponding result for
%$\nu_{\alpha,-\alpha-\gamma}$.
\end{proof}

%\begin{lemm}
% $(\tilde{T}_n^\circ)_{n\geq 1}$ are the  unlabelled
%alpha-gamma trees, then for every $n\in \mathbb{N}$,
%$\tilde{T}_n^\circ$ is identical to $T_n^\circ$ in distribution.
%\end{lemm}
\begin{proof}[Proof of Corollary \ref{dconv}] The splitting rules of
 $T_n^\circ$ are the same as those of $\widetilde{T}_n^\circ$, which leads to the
identity in distribution for the whole trees. The preceding lemma yields convergence in 
distribution for $T_n^\circ$. \end{proof}

%Thus we have  asymptotics of the  unlabelled alpha-gamma trees in
%Corollary 3.

\section{Limiting results for labelled alpha-gamma trees}

In this section we suppose $0<\alpha<1$ and $0<\gamma\le\alpha$. In the boundary case $\gamma=0$ trees grow logarithmically and
do not possess non-degenerate scaling limits; for $\alpha=1$ the study in Section \ref{sectalpha1} can be refined to give
results analogous to the ones below, but with degenerate tree shapes.

\subsection{The scaling limits of reduced alpha-gamma trees}
 For $\tau$ a rooted $\mathbb{R}$-tree and
$x_1,\ldots ,x_n\in\tau$, we call $R(\tau,
x_1,\ldots ,x_n)=\bigcup_{i=1}^n[[\rho,x_i]]$ the reduced subtree
associated with $\tau,x_1,\ldots ,x_n$, where $\rho$ is the root of
$\tau$.

As a fragmentation CRT, the limiting CRT
$(\mathcal{T}^{\alpha,\gamma},\mu)$ is
naturally equipped with a mass measure $\mu$ and contains subtrees
$\widetilde{\mathcal{R}}_k,k\geq 1$ spanned by $k$ leaves chosen
independently according to $\mu$. Denote the 
%tree shape of $\widetilde{\mathcal{R}}_n$ by $T_n^\circ$, 
discrete tree without edge lengths by $\tilde{T}_n$ -- 
%be the unlabelled trees $T_n^{\circ}$ equipped with
it has \textit{exchangeable} leaf labels. Then $\widetilde{\mathcal{R}}_n$ is the
almost sure scaling limit of the reduced trees $R(\widetilde{T}_n,[k])$,
by Proposition 7 in \cite{HMPW}.

On the other hand, if we denote by $T_n$ 	the (non-exchangeably) labelled trees obtained
via the alpha-gamma growth rules, the above result will not
apply, but, similarly to the result for the alpha model shown in
Proposition 18 in \cite{HMPW}, we can still establish a.s. convergence of
the reduced subtrees in the alpha-gamma model as stated in Theorem
\ref{LE}, and the convergence result can be strengthened as follows.

\begin{prop}\label{slagt}In the setting of Theorem \ref{LE}
$$
           (n^{-\gamma}R(T_n,[k]), n^{-1}W_{n,k})\rightarrow
           (\cR_k,W_k)\qquad\mbox{a.s. as $n\rightarrow\infty$,}$$
           in the sense of Gromov-Hausdorff convergence, where
           $W_{n,k}$  is the total number of leaves in
           subtrees  of $T_n\backslash
           R(T_n,[k])$ that are linked to the present branch points of
           $R(T_n,[k])$.
           \end{prop}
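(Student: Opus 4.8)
Although Theorem~\ref{LE} already supplies the edge-length limit $\cR_k$, to obtain the \emph{joint} convergence with the \emph{same} random variable $W_k$ I would re-run the argument that proves it while carrying the extra coordinate $W_{n,k}$ as an explicit almost sure limit. The first point is that the reduction stabilises at once: inserting a leaf $m>k$ either subdivides an edge of the spanning tree of $[k]$ (creating a vertex of degree $2$ in $R(T_n,[k])$, which is contracted) or attaches a fringe subtree at an existing vertex, so the \emph{shape} of $R(T_n,[k])$ is already that of $T_k$ for every $n\ge k$, with $k+\ell$ edges and $\ell$ multifurcation vertices. I would condition on $T_k$ (equivalently on $\ell$) once and for all, and label each leaf $m>k$ by the region of $T_k$ carrying its fringe subtree: one of the $k+\ell$ edge-regions, or one of the $\ell$ vertex-regions. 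Here $W_{n,k}$ is the total mass of the edge-regions, and for $k=1$ this is the whole off-spine mass, consistent with the degenerate $W_1=1$.

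Next I would read off the mass split as a P\'olya urn. By growth rule~(i) the total weight of $T_n$ is $n-\alpha$, and checking the three insertion moves shows that adding a leaf inside a fixed region raises that region's weight by exactly $1$, with selection proportional to current weight. Thus the vector of region masses is a generalised P\'olya urn with unit increments, started at $n=k$ (where $T_n=T_k$ carries no fringe) from the \emph{bare} weights of $T_k$: $1-\alpha$ on each leaf edge, $\gamma$ on each internal edge and $(d-1)\alpha-\gamma$ on each vertex with $d$ children, which indeed sum to $k-\alpha$. Standard P\'olya/de Finetti convergence then yields almost sure convergence of the normalised region masses to a Dirichlet vector; aggregating the edge-regions against the vertex-regions, and using $\sum_v d_v=k+\ell-1$ to compute the total edge weight $k(1-\alpha)+\ell\gamma$ and the total vertex weight $(k-1)\alpha-\ell\gamma$, I would obtain $n^{-1}W_{n,k}\to W_k$ almost surely, with $W_k\sim{\rm beta}(k(1-\alpha)+\ell\gamma,(k-1)\alpha-\ell\gamma)$. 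This defines $W_k$ as a genuine functional of the growth sequence and already matches its law in Theorem~\ref{LE}.

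Then I would recover the edge lengths from this \emph{same} mass. Along each edge-region the successive bushes form an ordered Chinese restaurant partition with parameter $\gamma$, so by Lemma~\ref{crp2} the number of bushes on a segment carrying mass $m$ is asymptotic to $m^\gamma$ times a Mittag-Leffler variable. Since the total edge-side mass is $\sim W_k n$, the ordered-restaurant bush count on each segment yields $n^{-\gamma}\lambda_e(n)\to L_k W_k^\gamma D_k^{(e)}$ almost surely for the graph length $\lambda_e(n)$ of edge $e$, with the Mittag-Leffler factor $L_k$ and the Dirichlet length-split $D_k=(D_k^{(e)})$ as in Theorem~\ref{LE}, and crucially with the very $W_k=\lim_n n^{-1}W_{n,k}$ of the previous step appearing as $W_k^\gamma$. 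As $n^{-\gamma}R(T_n,[k])\to\cR_k$ and $n^{-1}W_{n,k}\to W_k$ are now both almost sure limits built from the one sequence $(T_n)$, the pair converges jointly, which is the assertion of Proposition~\ref{slagt}; in particular this also re-establishes Theorem~\ref{LE}.

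I expect the genuine difficulty to be precisely this two-scale coupling. The masses live at scale $n$ while the lengths live at scale $n^\gamma$, and one must run the ordered-restaurant length asymptotics \emph{jointly} with the Dirichlet mass limit so that the beta variable surfacing as $W_k^\gamma$ in the lengths is literally $\lim_n n^{-1}W_{n,k}$ and not an independent copy with the same law. Making this rigorous requires the table-count convergence of Lemma~\ref{crp2} to hold simultaneously with the P\'olya mass limit; in particular one must account for the tilting that turns the raw exponent $-(1-\alpha)/\gamma$ of the spinal $(\gamma,1-\alpha)$-restaurant into the exponent $\ell+k(1-\alpha)/\gamma$ in the law of $L_k$ (arising from conditioning on the shape $T_k$ and from following the marked leaves $[k]$, which size-biases the relevant segment masses). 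One finally needs a uniform-integrability estimate guaranteeing that no asymptotically positive fraction of leaves drifts into ever deeper fringe structure, so that the normalised region masses converge to a bona fide probability vector and the edge- versus vertex-mass proportions add to $1$.
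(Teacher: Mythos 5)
Your proposal follows essentially the same route as the paper: the shape of $R(T_n,[k])$ is $T_k$ from the outset, the edge-versus-vertex mass split is a P\'olya-type urn started from the bare weights $k(1-\alpha)+\ell\gamma$ and $(k-1)\alpha-\ell\gamma$ (each insertion adding total weight $1$ to its region), giving $n^{-1}W_{n,k}\to W_k$ a.s., the lengths come from Chinese-restaurant table counts at scale $n^\gamma$ via Lemma \ref{crp2}, and joint convergence is automatic because all limits are almost sure limits along the one growth sequence. The only organizational difference is that the paper runs a \emph{single} ordered CRP with $(\gamma,k(1-\alpha)+\ell\gamma)$-seating plan over all skeletal bushes to get the total length $L_kW_k^\gamma$ and then a separate bush-allocation urn (leaf edges starting at weight $1-\alpha$, inner edges at $\gamma$, each new bush adding $\gamma$) for the Dirichlet proportions $D_k$, rather than your per-edge ordered restaurants — this sidesteps exactly the beta--gamma bookkeeping and two-scale coupling you flag as the main difficulty.
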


\begin{proof}[Proof of Theorem \ref{LE} and Proposition \ref{slagt}]
Actually, the labelled discrete tree $R(T_n,[k])$ with edge lengths removed is $T_k$ 
for all $n$. Thus, it suffices to prove the convergence of its total
length and of its edge length proportions. 

Let us consider a first urn model, cf. \cite{Fel1}, where at level $n$ the urn contains a 
black ball for each leaf in a subtree that is directly connected to a branch point of
$R(T_n,[k])$, and a white ball for each 
leaf in one of the remaining subtrees connected to the edges of $R(T_n,[k])$. 
Suppose that the balls are labelled like the leaves they represent. If the urn then
contains $W_{n,k}=m$ white balls and $n-k-m$ black balls, the induced partition of $\{k+1,\ldots ,n\}$
has probability function
$$p(m,
n-k-m)=\frac{\Gamma(n-m-\alpha-w)\Gamma(w+m)\Gamma(k-\alpha)}{\Gamma(k-\alpha-w)\Gamma(w)\Gamma(n-\alpha)}
=\frac{B(n-m-\alpha-w,w+m)}{B(k-\alpha-w,w)}
$$
where $w=k(1-\alpha)+\ell\gamma$ is the total weight on the $k$ leaf edges and $\ell$ other edges of
$T_k$. As $n\rightarrow\infty$, the urn is such that $W_{n,k}/n\rightarrow W_k$ a.s., where $W_k\sim {\rm
beta}((k-1)\alpha-l\gamma,k(1-\alpha)+l\gamma)$.

We will partition the white balls further. Extending the notions of spine, spinal subtrees and spinal bushes 
from Proposition \ref{prop10} ($k=1$), we call, for $k\ge 2$, \em skeleton \em the tree $S(T_n,[k])$ of $T_n$ spanned by the {\sc root} and 
leaves $[k]$ including the degree-2 vertices, for each such degree-2 vertex $v\in S(T_n,[k])$, we consider the skeletal 
subtrees $S^{\rm sk}_{vj}$ that we join together into a \em skeletal bush \em $S^{\rm sk}_v$. 
Note that the total length $L_k^{(n)}$ of the skeleton $S(T_n,[k])$ will increase by 1 if leaf $n+1$ in
$T_{n+1}$ is added to any of the edges of $S(T_n,[k])$; also, $L_k^{(n)}$ is equal to the number of skeletal bushes 
(denoted by $\overline{K}_n$) plus the original total length of $k+\ell$ of $T_k$. Hence, as $n\rightarrow
\infty$
\begin{equation}
\frac{L_k^{(n)}}{n^{\gamma}}\sim\frac{\overline{K}_n}{W_{n,k}^{\gamma}}\left
(\frac{W_{n,k}}{n}\right )^{\gamma}
\sim\frac{\overline{K}_n}{W_{n,k}^{\gamma}}W_k^{\gamma}.\label{sim}
\end{equation}
The partition of leaves (associated with white balls), where each skeletal bushes gives rise to a block,
follows the dynamics of a Chinese Restaurant Process with $(\gamma,w)$-seating plan: 
given that the number of white balls in the first urn is $m$ and that there are $K_m:=\overline{K}_n$
skeletal bushes on the edges of $S(T_n,[k])$ with $n_i$ leaves on the $i$th bush, the next leaf associated 
with a white ball will be inserted into any particular bush with $n_i$ leaves with probability proportional to $n_i-\gamma$ and
will create a new bush with probability proportional to $w+K_m\gamma$. 
%The same as Chinese Restaurant Process,
%we call these bushes tables $i=1,\ldots ,K_n$ and each leaf as a
%customer. Each new customer joins table $i$ with probability
%$(n_i-\gamma)/(n-k-m+\theta),i=1,\ldots ,K_n$ and chooses a new table
%with probability $(\theta+K_n(\gamma))/(n-\alpha)$, where
%$\theta=k-\alpha-w$. This is in correspondence with a partition
%$(n_1,\ldots ,n_{K_n})$ of $[n-k-m]$. 
Hence, the EPPF of this partition of the white balls is
$$p_{\gamma,w}(n_1,\ldots,n_{K_m})=\frac{\gamma^{K_m-1}\Gamma(K_m+w/\gamma)\Gamma(1+w)}
{\Gamma(1+w/\gamma)\Gamma(m+w)}\prod_{i=1}^{K_m}\Gamma_{\gamma}(n_i).$$
Applying Lemma \ref{crp2} in connection with (\ref{sim}),
we get the probability density of $L_k/W_k^{\gamma}$ as
specified.

Finally, we set up another urn model that is updated whenever a new skeletal bush is created. This model records
the edge lengths of $R(T_n,[k])$. The alpha-gamma growth rules assign weights $1-\alpha+(n_i-1)\gamma$ to leaf
edges of $R(T_n,[k])$ and weights $n_i\gamma$ to other edges of length $n_i$, and each new skeletal bush makes one of the weights
increase by $\gamma$. Hence, the conditional
probability that the length of each edge is $(n_1,\ldots ,n_{k+l})$ at
stage $n$ is that
$$
\frac{\prod_{i=1}^{k}\Gamma_{1-\alpha}(n_i)\prod_{i=k+1}^{k+\ell}\Gamma_{\gamma}(n_i)}{\Gamma_{k\alpha+\ell\gamma}(n-k)}.$$
Then $D_k^{(n)}$ converge a.s. to the Dirichlet limit as specified.
Moreover, $L_k^{(n)}D_k^{(n)}\rightarrow L_kD_k$ a.s., and it is easily seen that this implies
convergence in the Gromov-Hausdorff sense.

The above argument actually gives us the conditional distribution of
$L_k/W_k^{\gamma}$ given $T_k$ and $W_k$, which does not
depend on $W_k$. Similarly, the conditional distribution of
$D_k$ given given $T_k$, $W_k$ and $L_k$ does not depend
on $W_k$ and $L_k$. Hence, the conditional independence of $W_k$,
$L_k/W_k^{\gamma}$ and $D_k$ given $T_k$ follows.
\end{proof}

\subsection{Further limiting results}\label{secbw}

Alpha-gamma trees not only have edge weights but also vertex
weights, and the latter are in correspondence with the vertex degrees. We can get a result 
on the limiting ratio between the
degree of each vertex and the total number of leaves.

\begin{prop}
  Let $(c_1+1,\ldots,c_{\ell}+1)$ be the degree of each vertex in $T_k$, listed by
  depth first search. The ratio between the degrees in $T_n$ of these vertices
  and $n^\alpha$ will
  converge to
  $$C_k=(C_{k,1},\ldots,C_{k,\ell})=
  \overline{W}_k^\alpha M_k D_k^\prime,\qquad
		\mbox{where $D_k^\prime\sim{\rm Dirichlet}(c_1-1-\gamma/\alpha,\ldots,c_\ell-1-\gamma/\alpha)$}$$
  and $M_k$ are conditionally independent of $W_k$ given $T_k$, where $\overline{W}_k=1-W_k$, and $M_k$ has density
  $$\frac{\Gamma(\overline{w}+1)}{\Gamma(\overline{w}/\alpha+1)}s^{\overline{w}/\alpha}g_\alpha(s),\qquad s\in(0,\infty),$$
   $\overline{w}=(k-1)\alpha-\ell\gamma$ is total branch point weight in $T_k$ and $g_\alpha(s)$ is the Mittag-Leffler density.
\end{prop}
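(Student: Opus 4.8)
The plan is to follow the nested-urn strategy used in the proof of Theorem \ref{LE}, but to track the \emph{degrees} (equivalently, the numbers of children) of the $\ell$ branch points of $T_k$ rather than the edge lengths. The first step is the combinatorial observation that the degree of the $i$th branch point $b_i$ in $T_n$ equals $c_i+1+r_i^{(n)}$, where $r_i^{(n)}$ is the number of subtrees attached directly at $b_i$ in $T_n$ that are \emph{not} among its $c_i$ original children in $T_k$: subdividing an edge incident to $b_i$ or growing an existing subtree never changes $\deg(b_i)$, while each direct attachment at $b_i$ (a vertex selection in rule (i)) adds exactly one new child. Thus it suffices to study $R_n:=\sum_{i=1}^\ell r_i^{(n)}$ and its split among the branch points. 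I will reuse the first urn from the proof of Theorem \ref{LE} separating the $B_n$ ``black'' leaves (those lying in subtrees attached directly at branch points) from the white ones, for which $B_n/n\to\overline{W}_k\sim{\rm beta}(\overline{w},w)$ a.s., with $\overline{w}=(k-1)\alpha-\ell\gamma$ and $w=k(1-\alpha)+\ell\gamma$.

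The heart of the argument is a weight computation. When $b_i$ carries $r_i$ new subtrees with $M_{n,i}$ leaves in total, its vertex weight $(c_i+r_i-1)\alpha-\gamma$ together with the weights $\sum_j(m_{ij}-\alpha)$ of those subtrees telescopes to $M_{n,i}+\beta_i$, where $\beta_i=(c_i-1)\alpha-\gamma=\alpha(c_i-1-\gamma/\alpha)$ is the weight of $b_i$ already in $T_k$. Summing over $i$ (and using $\sum_i c_i=k+\ell-1$, hence $\sum_i\beta_i=\overline{w}$), the total weight eligible to receive a black leaf is $B_n+\overline{w}$, of which the part creating a brand-new subtree is $\sum_i\big((c_i+r_i-1)\alpha-\gamma\big)=\alpha R_n+\overline{w}$. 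Conditionally on the next leaf being black, $R_n$ therefore counts exactly the tables of a Chinese restaurant with $(\alpha,\overline{w})$-seating plan fed by the $B_n$ black leaves, so Lemma \ref{crp2} gives $R_n/B_n^\alpha\to M_k$ a.s.\ with $M_k$ of the stated Mittag--Leffler density (parameter $\theta=\overline{w}$). Restricting attention to the table-creation steps, a new subtree is placed at $b_i$ with probability proportional to $\alpha(a_i+r_i)\propto a_i+r_i$, where $a_i=c_i-1-\gamma/\alpha$; this is an autonomous P\'olya urn with initial weights $(a_1,\ldots,a_\ell)$, whence $(r_i^{(n)}/R_n)_i\to D_k'\sim{\rm Dirichlet}(a_1,\ldots,a_\ell)$ a.s.

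Assembling the pieces, $\deg_{T_n}(b_i)/n^\alpha=(r_i^{(n)}/R_n)(R_n/B_n^\alpha)(B_n/n)^\alpha+o(1)\to D_{k,i}'\,M_k\,\overline{W}_k^\alpha$ a.s., which is the claimed $C_{k,i}$. The independence assertions follow by decoupling the three levels. Since the probability of creating a new table depends only on $(R_n,B_n)$ and never on which branch points already hold tables, the branch-point label sequence of successive tables is independent of the creation times, giving $D_k'\perp M_k$. Likewise $M_k$ and $D_k'$ are functions only of the black-leaf restaurant indexed by the black-leaf count, which the growth rules render conditionally independent, given $T_k$, of the white/black split that produces $W_k$; hence $M_k$ (and $D_k'$) are conditionally independent of $W_k$ given $T_k$.

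The main obstacle is the bookkeeping in the middle paragraph: one must verify the telescoping of vertex and subtree weights carefully so as to recognise the aggregate new-subtree process as a \emph{single} $(\alpha,\overline{w})$-restaurant, and then justify that the two nested urns (label allocation and creation timing) genuinely decouple in the limit rather than merely being asymptotically uncorrelated. A minor point to watch is non-degeneracy: $a_i=c_i-1-\gamma/\alpha\ge 0$ and $\overline{w}>0$ for $\gamma<\alpha$, so the Dirichlet and Mittag--Leffler limits are proper, whereas the binary boundary $\gamma=\alpha$ forces all vertex weights to vanish and hence $C_k=0$, consistent with the absence of multifurcation there.
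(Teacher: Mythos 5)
Your proposal is correct and follows essentially the same route as the paper: the black/white urn from the proof of Theorem \ref{LE}, the identification of the new subtrees at branch points as tables of a Chinese restaurant with $(\alpha,\overline{w})$-seating plan (giving $M_k$ via Lemma \ref{crp2}), and a P\'olya-type urn at the table-creation times for the Dirichlet split. Your version merely makes explicit the weight telescoping and the identity $\sum_i\bigl((c_i-1)\alpha-\gamma\bigr)=\overline{w}$ that the paper leaves implicit.
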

\begin{proof} Recall the first urn model in the preceding proof which assigns
  colour black to leaves attached in subtrees of branch points of $T_k$. We will
  partition the black balls further. 
  %Given that a leaf is added to a subtree which is directly
  %connected to a vertex of $T_k$, the total degree of the nodes equal to the
  %number of subtrees plus the initial total degree of
  %$\tilde{T}_k^{ord}$. Clearly, we take each subtree as a table, this
  The partition of leaves (associated with black balls), where each \em subtree \em $S^{\rm sk}_{vj}$ of a branch point  
  $v\in R(T_n,[k])$ gives rise to a block, follows the dynamics of a Chinese Restaurant Process with $(\alpha,\overline{w})$-seating 
  plan. Hence, the total degree $C^{\rm tot}_k(n)/\overline{W}_{n,k}^\alpha\rightarrow M_k$ a.s., where $C_k^{\rm tot}(n)$ is
  the sum of degrees in $T_n$ of the branch points of $T_k$, and $\overline{W}_{n,k}=n-k-W_{n,k}$ is the total number of leaves of
  $T_n$ that are in subtrees directly connected to the branch points of $T_k$. 

  Similarly to the discussion of edge
  length proportions, we now see that the sequence of degree proportions will converge a.s. to the Dirichlet limit as specified. 
  Since $1-W_k$ is the a.s. limiting proportion of leaves in subtrees connected to the vertices of $T_k$.
\end{proof}

Given an alpha-gamma tree $T_n$, if we decompose along the spine that
connects the {\sc root} to leaf 1, we will find the leaf numbers of
subtrees connected to the spine is a Chinese restaurant partition of
$\{2,\ldots,n\}$ with parameters $(\alpha, 1-\alpha)$. Applying Lemma
\ref{crp1}, we get following result.

\begin{prop} Let $(T_n,n\ge 1)$ be alpha-gamma trees.
  Denote by $(P_1,P_2,\ldots)$ the limiting frequencies of the leaf numbers of each subtree
  of the spine connecting the {\sc root} to leaf 1 in the order of 
  appearance. These can be represented as
  $$(P_1,P_2,\ldots)=(W_1,\overline{W}_1W_2,\overline{W}_1\overline{W}_2W_3,\ldots )$$
  where the $W_i$ are independent, $W_i$ has ${\rm beta}(1-\alpha,
  1+(i-1)\alpha)$ distribution, and $\overline{W}_i=1-W_i$.
\end{prop}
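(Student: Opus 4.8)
The plan is to identify the partition of the label set $\{2,3,\ldots,n\}$ induced by the spinal subtrees of $T_n$ --- two labels lying in the same block precisely when the corresponding leaves sit in the same subtree hanging off the spine $\mbox{\sc root}\rightarrow v_1\rightarrow\ldots\rightarrow v_{L_{n-1}}\rightarrow 1$ --- as the partition generated by a Chinese restaurant process with $(\alpha,1-\alpha)$-seating plan, and then to read off the limiting frequencies from Lemma \ref{crp1}. Here each subtree plays the role of a table and each leaf $j\ge 2$ of a customer, with leaf $2$ the first customer, exactly as in the setup of the proof of Proposition \ref{prop10}; note that since leaf $1$ lies on the spine, every other leaf belongs to exactly one hanging subtree.

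First I would verify the seating dynamics directly from the growth rules {\rm (i)}-{\rm (iii)}. Recall that the total weight of $T_n$ is $n-\alpha=(n-1)+(1-\alpha)$, which is the correct normalisation for a CRP with $n-1$ customers and $\theta=1-\alpha$. A spinal subtree with $n_j$ leaves, taken together with its connecting edge to the spine, is structurally an alpha-gamma tree on $n_j$ leaves (the connecting edge playing the role of the root edge), and hence carries total weight $n_j-\alpha$; so leaf $n+1$ is inserted into it so as to grow it to $n_j+1$ leaves with probability $(n_j-\alpha)/(n-\alpha)$, matching the CRP$(\alpha,1-\alpha)$ rule for an occupied table. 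It then remains to check that a fresh subtree (a new table) is opened with the complementary weight $(1-\alpha)+\alpha K$, where $K$ denotes the current number of subtrees.

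The key bookkeeping step is to sum the weights of all insertions that create a new single-leaf subtree. Writing $B$ for the number of spinal branch points and $k_i$ for the number of subtrees at the $i$th of them, so that $K=\sum_{i=1}^B k_i$, there are three contributions: the $B$ inner spine edges, each of weight $\gamma$; the single leaf edge toward leaf $1$, of weight $1-\alpha$; and the $B$ spinal branch points, the $i$th of degree $k_i+2$ and hence of vertex weight $k_i\alpha-\gamma$. Their total is $B\gamma+(1-\alpha)+\sum_{i=1}^B(k_i\alpha-\gamma)=(1-\alpha)+\alpha K$, the $B\gamma$ and $-B\gamma$ terms cancelling; this cancellation is the one point deserving care, and I expect the weight accounting for the new-table event --- ensuring each insertion opening a subtree is counted exactly once across spine edges and spinal vertices --- to be the main obstacle. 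This confirms the CRP$(\alpha,1-\alpha)$ structure. The only remaining subtlety is that Lemma \ref{crp1} lists frequencies in size-biased order of least elements whereas the proposition lists them in order of appearance; these orders coincide for a CRP, since the least element of a table is the customer who opened it. Applying Lemma \ref{crp1} with parameters $(\alpha,1-\alpha)$ then gives the ${\rm GEM}_{\alpha,1-\alpha}$ frequencies $(W_1,\overline{W}_1W_2,\ldots)$ with independent $W_i\sim{\rm beta}(1-\alpha,(1-\alpha)+i\alpha)={\rm beta}(1-\alpha,1+(i-1)\alpha)$, which is the claimed representation.
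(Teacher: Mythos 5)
Your proposal is correct and follows exactly the paper's route: the paper likewise identifies the partition of $\{2,\ldots,n\}$ by spinal subtrees as a Chinese restaurant partition with parameters $(\alpha,1-\alpha)$ and then applies Lemma \ref{crp1}, noting that $\theta=1-\alpha$ gives $W_i\sim{\rm beta}(1-\alpha,1+(i-1)\alpha)$. Your explicit weight accounting (subtree weight $n_j-\alpha$, new-table weight $(1-\alpha)+\alpha K$ after the $B\gamma$ cancellation) and the remark on order of appearance versus order of least elements are details the paper leaves implicit, and they check out.
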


Observe that this result does not depend on $\gamma$. This observation also follows from Proposition \ref{prop6}, because colouring
(iv)$^{\rm col}$ and crushing (cr) do not affect the partition of leaf labels according to subtrees of the spine.

\bibliographystyle{abbrv}
\bibliography{cfw}

\end{document}